\newcommand{\de}{\partial}
\newcommand{\db}{\overline{\partial}}
\newcommand{\dbar}{\overline{\partial}}
\newcommand{\ddbar}{\sqrt{-1} \partial \overline{\partial}}
\newcommand{\ov}[1]{\overline{#1}}
\newcommand{\ti}[1]{\tilde{#1}}
\newcommand{\vp}{\varphi}
\newcommand{\vol}{\mathrm{Vol}}
\newcommand{\ve}{\varepsilon}
\newcommand{\ol}{\overline}
\renewcommand{\leq}{\leqslant}
\renewcommand{\geq}{\geqslant}
\renewcommand{\le}{\leqslant}
\renewcommand{\ge}{\geqslant}
\newcommand{\be}{\begin{equation}}
\newcommand{\ee}{\end{equation}}
\begin{document}
\newcounter{remark}
\newcounter{theor}
\setcounter{remark}{0}
\setcounter{theor}{1}
\newtheorem{claim}{Claim}
\newtheorem{theorem}{Theorem}[section]
\newtheorem{lemma}[theorem]{Lemma}
\newtheorem{corollary}[theorem]{Corollary}
\newtheorem{proposition}[theorem]{Proposition}
\newtheorem{question}{question}[section]
\newtheorem{defn}{Definition}[theor]
\newtheorem{remark}[theorem]{Remark}

\numberwithin{equation}{section}


\title{The Fu-Yau equation on compact astheno-K\"{a}hler manifolds}

\author[J. Chu]{Jianchun Chu}
\address{Institute of Mathematics, Academy of Mathematics and Systems Science, Chinese Academy of Sciences, Beijing 100190, P. R. China}
\email{chujianchun@gmail.com}
\author[L. Huang]{Liding Huang}
\address{School of Mathematical Sciences, University of Science and Technology of China, Hefei 230026, P. R. China}
\email{huangld@mail.ustc.edu.cn}
\author[X. Zhu]{Xiaohua Zhu}
\address{School of Mathematical Sciences, Peking University, Yiheyuan Road 5, Beijing 100871, P. R. China }
\email{xhzhu@math.pku.edu.cn}

\subjclass[2010]{Primary:  58J05; Secondary: 53C55, 35J60}

\keywords{The Fu-Yau equation,  Hermitian manifolds,  astheno-K\"{a}hler manifolds,    $2$-nd Hessian equation.}

\begin{abstract}
In this paper, we study the Fu-Yau equation on compact Hermitian manifolds and prove the existence of solutions of equation on  astheno-K\"{a}hler manifolds. We also  prove   the uniqueness   of solutions  of Fu-Yau equation  when the slope parameter $\alpha$ is negative.
\end{abstract}
\maketitle

\section{Introduction}
Let $(M,\omega)$ be an $n$-dimensional compact  K\"{a}hler  manifold.    As a  reduced   generalized Strominger system in higher dimensions,  Fu and Yau introduced the following fully nonlinear equation for $\varphi$ \cite{FuY08},
\begin{equation}\label{Fu-Yau equation}
\begin{split}
\ddbar(e^{\vp}\omega & -\alpha e^{-\vp}\rho)\wedge\omega^{n-2} \\
                     & +n\alpha\ddbar\vp\wedge\ddbar\vp\wedge\omega^{n-2}+\mu\frac{\omega^{n}}{n!}=0,
\end{split}
\end{equation}
where $\alpha$ is a non-zero constant called the slope parameter, $\rho$ is a real smooth $(1,1)$ form, $\mu$ is a smooth function. For $\vp$, we impose the elliptic condition
\begin{equation}\label{Elliptic condition}
\tilde{\omega}=e^{\vp}\omega+\alpha e^{-\vp}\rho+2n\alpha\ddbar\vp \in \Gamma_{2}(M)
\end{equation}
and the normalization condition
\begin{equation}\label{Normalization condition}
\|e^{-\vp}\|_{L^{1}}=A,
\end{equation}
where
\begin{equation*}
\Gamma_{2}(M)=\{ \alpha\in A^{1,1}(M)~|~ \frac{\alpha^{1}\wedge\omega^{n-1}}{\omega^{n}}>0, \frac{\alpha^{2}\wedge\omega^{n-2}}{\omega^{n}}>0 \}
\end{equation*}
and $A^{1,1}(M)$ is the space of smooth real (1,1) forms on $M$.

When $n=2$, (\ref{Fu-Yau equation}) is equivalent to the Strominger system on a toric fibration over a $K3$ surface constructed by Goldstein and Prokushki \cite{GP04}, which was solved by Fu and Yau for $\alpha>0$ and $\alpha<0$ in \cite{FuY08} and \cite{FuY07}, respectively. (\ref{Fu-Yau equation}) is usually called Fu-Yau equation (cf. \cite{PPZ16b,GM16}).

In case of $\alpha<0$, Phong, Picard and Zhang \cite{PPZ16b} recently  proved the existence  of solutions of (\ref{Fu-Yau equation}) with the condition (\ref{Normalization condition}) is replaced by
\begin{equation*}
\|e^{\vp}\|_{L^{1}} = \frac{1}{A} \gg 1.
\end{equation*}
In \cite{CHZ18}, we prove that there exists constant $A_{0}$ depending only on $\alpha$, $\rho$, $\mu$ and $(M,\omega)$ such that for any $A\leq A_{0}$ and any $\alpha\neq0$, (\ref{Fu-Yau equation}) has a solution satisfying (\ref{Elliptic condition}) and  (\ref{Normalization condition}). \footnote{Phong, Pacard and Zhang  posted a paper \cite{PPZ18}  with a similar result after we posted the paper in arXiv.}  Our result is new and different from that of \cite{PPZ16b} which deals with only the case that $\alpha<0$.

Since the Strominger system comes from non-K\"{a}hler geometry \cite{Str86}, it is natural to consider (\ref{Fu-Yau equation})  on Hermitian manifolds. In the K\"{a}hler case, all the proofs in \cite{FuY08,FuY07,PPZ16b,CHZ18} relied heavily on the K\"{a}hler condition $d\omega=0$. It seems to be very difficult to solve (\ref{Fu-Yau equation}) on general Hermitian manifolds. In this paper,  we  focus on  a class of Hermitian manifolds which satisfies the astheno-K\"{a}hler condition
\begin{equation}\label{Astheno-Kahler condition}
\ddbar\omega^{n-2}=0.
\end{equation}
 Astheno-K\"{a}hler manifold  was first introduced in the paper of Jost-Yau \cite{JY93}, where they  extended  Siu's rigidity results  in  K\"{a}hler manifolds to astheno-K\"{a}hler manifolds  \cite{SY80}.   Such manifolds have many naturally properties as  K\"{a}hler manifolds.   For example, every holomorphic $1$-form on a compact astheno-K\"{a}hler manifold is closed \cite[Lemma 6]{JY93}.

There are many examples  of  astheno-K\"{a}hler manifolds, see \cite{LY87,LYZ94,MT01,Ma09,FT11,FGV16,LU17}.   For example, the product of a complex curve with a K\"{a}hler metric and a complex surface with a non-K\"{a}hler Gauduchon metric satisfies (\ref{Astheno-Kahler condition}).

The purpose of  this paper is to generalize the main result in  \cite{CHZ18} to  astheno-K\"{a}hler manifolds. Namely, we prove

\begin{theorem}\label{Existence and Uniqueness Theorem}
Let $(M,\omega)$ be an $n$-dimensional compact astheno-K\"{a}hler manifold.  Then there exist constants $A_{0}$, $C_{0}$, $\delta_{0}$, $M_{0}$ and $D_{0}$ depending only on $\alpha$, $\rho$, $\mu$ and $(M,\omega)$ such that for any $A\leq A_{0}$, there exists a unique solution $\vp$ of (\ref{Fu-Yau equation}) satisfying  (\ref{Elliptic condition}), (\ref{Normalization condition}) and
\begin{equation}\label{Restrictions}
e^{-\vp} \leq \delta_{0},~~|\de\dbar\vp|_{g} \leq D,~~D_{0}\leq D \text{~and~} A \leq \frac{1}{C_{0}M_{0}D}.
\end{equation}
\end{theorem}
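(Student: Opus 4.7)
The plan is to follow the continuity-method strategy of our earlier paper \cite{CHZ18} and adapt each step to the astheno-K\"ahler setting, where only $\ddbar\omega^{n-2}=0$ is available in place of $d\omega=0$. I would deform (\ref{Fu-Yau equation}) along a one-parameter family joining a trivial constant solution at $t=0$ to the equation of interest at $t=1$, preserving (\ref{Elliptic condition}) and (\ref{Normalization condition}) throughout. Openness of the solvable set follows from the implicit function theorem: the linearization, after incorporating an auxiliary constant parameter so as to handle the normalization, becomes a surjective second-order linear elliptic operator modulo finite-dimensional cokernels (the lower-order torsion contributions from the Hermitian structure do not affect ellipticity). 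Closedness is the content of the a priori estimates packaged in (\ref{Restrictions}), and smallness of $A$ is precisely what makes the starting point $t=0$ admissible.

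The bulk of the work is to reproduce the a priori estimates of \cite{CHZ18} using only $\ddbar\omega^{n-2}=0$. For the $C^{0}$ bound I would test (\ref{Fu-Yau equation}) against $e^{-k\vp}$ and integrate, obtaining after repeated integration by parts a Moser-type inequality for $e^{-\vp}$. In the K\"ahler case the key identity $\int_{M}u\,\ddbar v\wedge\omega^{n-2}=\int_{M}v\,\ddbar u\wedge\omega^{n-2}$ holds because both $d\omega=0$ and $\ddbar\omega^{n-2}=0$; here only the latter is available, so torsion remainders containing $\de\omega^{n-2}$ and $\dbar\omega^{n-2}$ appear and must be controlled. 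My plan is to absorb these by Cauchy-Schwarz against the positive gradient piece $e^{\vp}\,\de\vp\wedge\dbar\vp\wedge\omega^{n-1}$ emerging from $\ddbar(e^{\vp}\omega)\wedge\omega^{n-2}$, together with the smallness $e^{-\vp}\le\delta_{0}$; the iteration then closes to give $\sup_{M}e^{-\vp}\le CA$.

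With the $C^{0}$ estimate in hand, the $C^{1}$ and $C^{2}$ bounds come from maximum-principle arguments on quantities such as $|\de\vp|_{g}^{2}$ and $(\tr{\omega}{\ti{\omega}})\exp(F(\vp))$ adapted from \cite{CHZ18}, since the Hermitian commutators of covariant derivatives produce only lower-order torsion terms that are harmless once $C^{0}$ and $C^{1}$ are controlled. Evans-Krylov for the underlying $2$-Hessian structure upgrades these to $C^{2,\alpha}$, and bootstrap yields full smoothness. For uniqueness within the class (\ref{Restrictions}), I would linearize along the segment $\vp_{s}=s\vp_{1}+(1-s)\vp_{2}$, integrate the difference equation against a carefully chosen test function, use $\ddbar\omega^{n-2}=0$ to reduce to an elliptic inequality whose lower-order coefficients are made subdominant by the bounds $A\le 1/(C_{0}M_{0}D)$ and $e^{-\vp}\le\delta_{0}$, and conclude via the maximum principle after matching normalizations.

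The principal obstacle is the $C^{0}$ estimate: closing the Moser iteration without $d\omega=0$ requires that the torsion remainders produced by $\de\omega^{n-2}$ and $\dbar\omega^{n-2}$ come with constants that remain bounded uniformly in the iteration parameter $k$. Once this bookkeeping is arranged, the remainder of the proof parallels \cite{CHZ18} with only computational modifications.
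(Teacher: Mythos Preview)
Your plan has a genuine gap at the very first step of the continuity method. You propose to deform to ``a trivial constant solution at $t=0$,'' but in the astheno-K\"ahler setting a constant is \emph{not} a solution of the $t=0$ equation. If one uses the path from \cite{CHZ18}, the $t=0$ equation reads
\[
\ddbar(e^{\vp}\omega)\wedge\omega^{n-2}+n\alpha\,\ddbar\vp\wedge\ddbar\vp\wedge\omega^{n-2}=0,
\]
and plugging in $\vp\equiv c$ leaves the term $e^{c}\,\ddbar\omega\wedge\omega^{n-2}$, which has no reason to vanish when only $\ddbar\omega^{n-2}=0$ is assumed. The paper addresses this explicitly: it first solves the linear equation $\ddbar(e^{h}\omega)\wedge\omega^{n-2}=0$ for a function $h$ (Lemma~\ref{Existence lemma}, via a Fredholm/strong-maximum-principle argument that itself uses $\ddbar\omega^{n-2}=0$), and then \emph{modifies the continuity path} by inserting an extra term $n\alpha(t-1)\,\ddbar h\wedge\ddbar h\wedge\omega^{n-2}$ so that $\vp_{0}=h+\log\|e^{-h}\|_{L^{1}}-\log A$ becomes an admissible starting solution. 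Without this ingredient your continuity method never gets off the ground.

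Two smaller points. For openness, the paper's argument is not just ``ellipticity plus finite cokernel'': the crucial observation is that $\ddbar\omega^{n-2}=0$ forces the adjoint $L^{*}$ to have \emph{no zero-order term} (see \eqref{Definition of L^*}), which is what makes the strong maximum principle apply and pins down $\mathrm{Ker}\,L^{*}$; you should isolate this. For uniqueness, the paper does not linearize along the segment $s\vp_{1}+(1-s)\vp_{2}$; instead it runs the continuity path backwards for each solution and uses that the $t=0$ solution is unique (again requiring the function $h$). Your proposed convex-combination argument may be workable, but as written it is only a sketch, and in any case you still need the $t=0$ analysis to anchor the problem.
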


  Since  in the non-K\"{a}hler case the constant  is not  a trivial solution   in  the  continuity method    when $t=0$  in \cite{CHZ18},     we introduce  a new  continuous   path to solve (\ref{Fu-Yau equation}) as follows ($t\in [0,1]$),
\begin{equation}\label{Fu-Yau equation t}
\begin{split}
\ddbar(e^{\vp}\omega & -t\alpha e^{-\vp}\rho)\wedge\omega^{n-2}+n\alpha\ddbar\vp\wedge\ddbar\vp\wedge\omega^{n-2} \\
& +n\alpha(t-1)\ddbar h\wedge\ddbar h\wedge\omega^{n-2}+t\mu\frac{\omega^{n}}{n!} = 0,
\end{split}
\end{equation}
where $h$ is a smooth function. Clearly,    (\ref{Fu-Yau equation t}) is equivalent  to  (\ref{Fu-Yau equation}) when $t=1.$
We will show that there is  $h$ such that
 (\ref{Fu-Yau equation t}) can be  solved   when $t=0$ (cf. Lemma \ref{Existence lemma}).

In the proof of openness for the solvable set of $t$, the astheno-K\"{a}hler condition (\ref{Astheno-Kahler condition}) will play an important role. (\ref{Astheno-Kahler condition}) guarantees that the adjoint  of linearized operator $L$ has no zero order terms (cf. (\ref{Definition of L^*})), then the strong maximum principle can be applied.

 (\ref{Astheno-Kahler condition})  will also be used  for  the $C^{0}$-estimate (cf. Lemma \ref{Zero order estimate lemma}). In fact,  instead  of $L^{1}$-integral  of $\varphi$ in \cite{CHZ18},   we first estimate a $L^{k_{0}}$-integral  for some $k_{0}\ll1$,  then apply the Moser iteration to derive the  $C^0$-estimate.  The  $C^1, C^2$-estimates for solutions of (\ref{Fu-Yau equation t}) can be obtained by  the argument in \cite{CHZ18}. For the reader's convenience, we give a sketch of the proofs  in Section 3.  Actually, the  argument there  are valid for solutions of (\ref{Fu-Yau equation}) on any Hermitian manifolds $(M,\omega)$.

In the next part of  this paper,   we improve Theorem \ref{Existence and Uniqueness Theorem} without the  restriction condition (\ref{Restrictions}) in case of $\alpha<0$.  In fact,   we prove  the  following uniqueness  of Fu-Yau equation.

\begin{theorem}\label{Uniqueness theorem alpha negative}
Let $\alpha<0$ and $(M,\omega)$ be an $n$-dimensional compact astheno-K\"{a}hler manifold. There exists a constant $A_{0}$ depending only on $\alpha$, $\rho$, $\mu$ and $(M,\omega)$ such that for any $A\leq A_{0}$, (\ref{Fu-Yau equation}) has a unique smooth solution satisfying (\ref{Elliptic condition}) and the $L^{n}$-normalization condition
\begin{equation}\label{L^n normalization condition}
\|e^{-\vp}\|_{L^{n}}=A.
\end{equation}
\end{theorem}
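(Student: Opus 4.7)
I would run the continuity method of Theorem \ref{Existence and Uniqueness Theorem} along the path (\ref{Fu-Yau equation t}), replacing the $L^1$-normalization (\ref{Normalization condition}) by (\ref{L^n normalization condition}). When $\alpha<0$, the term $n\alpha\,\ddbar\vp\wedge\ddbar\vp\wedge\omega^{n-2}$ has a favorable sign: in the Moser iteration it contributes a definite-sign term for the $C^0$-estimate, and in the $C^{1}$, $C^{2}$ estimates it appears as a concave contribution to the operator. These facts absorb the error terms that previously required (\ref{Restrictions}) and produce uniform a priori bounds depending only on the $L^n$-normalization and the geometric data, so openness and closedness of the solvable set for $t\in[0,1]$ carry over from Theorem \ref{Existence and Uniqueness Theorem}.

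\textbf{Uniqueness.} Let $\vp_{1},\vp_{2}$ be two solutions of (\ref{Fu-Yau equation}) satisfying (\ref{Elliptic condition}) and (\ref{L^n normalization condition}). Set $\vp_{s}=s\vp_{1}+(1-s)\vp_{2}$ and $u=\vp_{1}-\vp_{2}$; convexity of $\Gamma_{2}$ gives $\vp_{s}\in\Gamma_{2}$ for all $s\in[0,1]$. If $F$ denotes the Fu-Yau operator, the fundamental theorem of calculus yields $Lu=F(\vp_{1})-F(\vp_{2})=0$ with $L:=\int_{0}^{1}DF(\vp_{s})\,ds$, and the principal part of $L$ is $\ddbar v\wedge\ti\omega_{s}\wedge\omega^{n-2}$ with $\ti\omega_{s}\in\Gamma_{2}$, so $L$ is uniformly elliptic. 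The astheno-K\"ahler identity $\ddbar\omega^{n-2}=0$ implies $\int_{M}Lv\cdot\omega^{n}/n!=0$ for every $v$; dually, the formal $L^{2}(\omega^{n}/n!)$-adjoint $L^{*}$ has no zero-order term, as announced in the introduction. Pairing $Lu=0$ against $u$, using the identity $u\,\de\db u=\de\db(u^{2}/2)-\de u\wedge\db u$, integrating by parts, and repeatedly applying $\ddbar\omega^{n-2}=0$ to discard total-divergence pieces, one reduces to a coercivity inequality of the schematic form
\[
\int_{0}^{1}\!\!\int_{M}\de u\wedge\db u\wedge\Theta_{s}\wedge\omega^{n-2}\,ds\;\le\;\mathcal{R}[u],
\]
where $\Theta_{s}$ is a $(1,1)$-form built out of $\ti\omega_{s}$ and $\alpha\,\ddbar\vp_{s}$, and $\mathcal{R}[u]$ is an error term controlled by $\de\omega$, $\de\rho$, $\de\vp_{s}$ and $u$. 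The hypothesis $\alpha<0$ together with $\ti\omega_{s}\in\Gamma_{2}$ arranges $\Theta_{s}\wedge\omega^{n-2}$ to be a positive $(n-1,n-1)$-form, so the left side dominates $\int_{M}|\de u|_{g}^{2}\,\omega^{n}$; the smallness $A\le A_{0}$ and the Section 3 estimates absorb $\mathcal{R}[u]$, forcing $\de u\equiv 0$. Thus $u$ is constant, and strict monotonicity of $c\mapsto e^{-c}\|e^{-\vp_{2}}\|_{L^{n}}$ combined with (\ref{L^n normalization condition}) gives $c=0$, i.e.\ $\vp_{1}\equiv\vp_{2}$.

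\textbf{Main obstacle.} The delicate point is the coercivity estimate. Integration by parts on a non-K\"ahler background produces remainder terms involving $\de\omega$, $\de\rho$, $\de\vp_{s}$, and $u\,\de u$ with no definite sign; bounding them needs a careful use of $\ddbar\omega^{n-2}=0$, the a priori bounds of Section 3, and the smallness $A\le A_{0}$. It is at this point that both the sign $\alpha<0$ and the exponent $n$ in (\ref{L^n normalization condition}) play an essential role.
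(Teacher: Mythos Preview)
Your existence sketch is in line with what the paper does: rerun the continuity method of Section~4 with the $L^{n}$-normalization, feeding in the stronger $\alpha<0$ estimates of Sections~5--6 (Proposition~\ref{Stronger zero order estimate} and Theorem~\ref{A priori estimates}) in place of the restriction (\ref{Restrictions}). The paper simply writes ``The existence is proved in Theorem~\ref{Existence and Uniqueness Theorem}'' but this is the content.

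Your uniqueness argument, however, is a genuinely different route from the paper's, and it has real gaps.

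\textbf{Ellipticity along the interpolation.} The claim ``convexity of $\Gamma_{2}$ gives $\vp_{s}\in\Gamma_{2}$'' is not valid as stated. The map $\vp\mapsto\ti\omega(\vp)=e^{\vp}\omega+\alpha e^{-\vp}\rho+2n\alpha\ddbar\vp$ is \emph{not} affine in $\vp$ because of the $e^{\pm\vp}$ factors, so $\ti\omega(\vp_{s})$ is not a convex combination of $\ti\omega(\vp_{1})$ and $\ti\omega(\vp_{2})$, and convexity of the cone $\Gamma_{2}$ tells you nothing. This can be repaired \emph{a posteriori} using the a priori estimates (for $A$ small, $e^{\vp_{s}}\omega$ dominates the other two terms pointwise, forcing $\ti\omega(\vp_{s})>0$), but that is a different argument and you should say so.

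\textbf{The coercivity step.} This is where your proposal becomes too schematic to be convincing. The averaged operator $L$ carries genuine zero-order terms in $u$ (coming from $u\,\ddbar(e^{\vp_{s}}\omega)$, etc.) of size $e^{\vp_{s}}$, and after integrating by parts against $u$ you will see terms like $\int u^{2}\,\ddbar(e^{\vp_{s}}\omega)\wedge\omega^{n-2}$ and mixed terms $\int u\,\de u\wedge(\cdots)$ that are \emph{not} small relative to $\int e^{\vp_{s}}|\de u|^{2}$. You do not identify $\Theta_{s}$, you do not explain why $\alpha<0$ makes it positive (the sign of $\alpha$ enters through $\alpha e^{-\vp_{s}}\rho$ and $2n\alpha\ddbar\vp_{s}$, neither of which has an obvious sign), and you invoke ``the Section~3 estimates'' which in fact require the restriction (\ref{Restrictions}) you are trying to dispense with.

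\textbf{What the paper does instead.} The paper avoids a direct comparison at $t=1$ altogether. Given two solutions $\vp,\vp'$, it runs the continuity path (\ref{Fu-Yau equation t}) \emph{backward} from $t=1$ to $t=0$ through each of them, producing families $\vp_{t},\vp'_{t}$ (this uses Theorem~\ref{A priori estimates} for closedness). At $t=0$ the equation simplifies dramatically and Lemma~\ref{Uniqueness t=0} proves uniqueness there by an honest energy inequality; this is the only place an integration-by-parts argument is carried out, and it is much easier because $\rho$ and $\mu$ are absent. Then the set $J=\{t:\vp_{t}=\vp'_{t}\}$ is closed and, by the invertibility of the linearization established in Section~\ref{Proof of existence theorem}, open; hence $J=[0,1]$ and $\vp=\vp'$. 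The advantage of this homotopy argument is that it localizes the delicate energy estimate to the simplest point of the path, whereas your direct approach must confront the full equation.
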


Furthermore, investigating the structure of the Fu-Yau equation, we obtain the monotonicity property of solutions.

\begin{theorem}\label{Monotonicity theorem}
Let $\alpha<0$ and $(M,\omega)$ be an $n$-dimensional compact K\"{a}hler manifold. Suppose that $\vp$ and $\ti{\vp}$ are solutions of (\ref{Fu-Yau equation}) satisfying (\ref{Elliptic condition}). If $\|e^{-\vp}\|_{L^{n}}=A$, $\|e^{-\ti{\vp}}\|_{L^{n}}=\ti{A}$ and $A<\ti{A}\leq A_{0}$, then we have $\vp>\ti{\vp}$ on $M$, where $A_{0}$ is a constant depending only on $\alpha$, $\rho$, $\mu$ and $(M,\omega)$.
\end{theorem}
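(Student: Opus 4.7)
The plan is to use a continuity method in the normalization parameter $s$. By Theorem~\ref{Uniqueness theorem alpha negative}, for every $s \in (0, A_{0}]$ there is a unique solution $\vp_{s}$ of (\ref{Fu-Yau equation}) satisfying (\ref{Elliptic condition}) and $\|e^{-\vp_{s}}\|_{L^{n}} = s$. I would first verify that the curve $s \mapsto \vp_{s}$ is smooth into $C^{4,\beta}(M)$ via the implicit function theorem: the augmented linearization is injective by the uniqueness just proved and surjective by the Fredholm theory for scalar second-order elliptic operators on a closed manifold.

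Setting $\dot\vp_{s} := \frac{d}{ds}\vp_{s}$ and differentiating (\ref{Fu-Yau equation}) in $s$ gives $L\dot\vp_{s} = 0$, where the linearized Fu--Yau operator is
\[
Lu := \ddbar\!\bigl((e^{\vp_{s}}\omega + \alpha e^{-\vp_{s}}\rho)\,u\bigr) \wedge \omega^{n-2} + 2n\alpha\, \ddbar\vp_{s} \wedge \ddbar u \wedge \omega^{n-2}.
\]
Ellipticity of $L$ comes from $\ti\omega \in \Gamma_{2}(M)$. Differentiating the normalization $\int_{M} e^{-n\vp_{s}}\omega^{n} = n!\, s^{n}$ yields
\[
\int_{M} e^{-n\vp_{s}}\, \dot\vp_{s}\, \omega^{n} = -n!\, s^{n-1} < 0.
\]

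The heart of the argument is to show $\dot\vp_{s} < 0$ everywhere on $M$. The K\"ahler hypothesis gives $\ddbar \omega^{n-2} = 0$, so two integrations by parts yield $\int_{M} Lu = 0$ for every $u \in C^{\infty}(M)$; equivalently $L^{*}(1) = 0$. Since $L$ is a scalar elliptic operator on a closed manifold, it is Fredholm of index zero, and the uniqueness in Theorem~\ref{Uniqueness theorem alpha negative} forces $\dim \ker L = \dim \ker L^{*} = 1$, so $\ker L^{*} = \mathrm{span}(1)$. Now I would invoke the Krein--Rutman theorem, or equivalently the strict monotonicity of the principal eigenvalue in the spirit of Berestycki--Nirenberg--Varadhan, for the elliptic operator $L$: it possesses a simple principal eigenvalue $\lambda_{0}$ with a strictly positive eigenfunction, and $\lambda_{0}(L) = \lambda_{0}(L^{*})$. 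Since $L^{*}(1) = 0$ exhibits $1 > 0$ as the principal eigenfunction of $L^{*}$, we get $\lambda_{0}(L) = 0$, and consequently $\ker L = \mathrm{span}(\phi_{0})$ for some $\phi_{0} > 0$. Therefore $\dot\vp_{s} = c\, \phi_{0}$ for some $c \in \mathbb{R}$, and the integral constraint above forces $c < 0$; hence $\dot\vp_{s} < 0$ pointwise on $M$.

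Integrating this strict inequality in $s$ along the continuity path,
\[
\vp(x) - \ti\vp(x) = -\int_{A}^{\ti{A}} \dot\vp_{s}(x)\, ds > 0 \quad \text{for all } x \in M,
\]
which gives $\vp > \ti\vp$ as desired. I expect the main obstacle to lie in the Krein--Rutman step: since $L$ is neither self-adjoint nor a priori in divergence form, producing a strictly positive generator of $\ker L$ requires verifying the strong positivity/irreducibility of a suitable resolvent of $L$, which should ultimately follow from the classical strong maximum principle for scalar elliptic operators combined with the connectedness of $M$.
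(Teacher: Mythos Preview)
Your strategy coincides with the paper's: parametrize solutions by the normalization value, differentiate to obtain $\dot\vp_s\in\ker L$, show $\ker L$ is spanned by a strictly positive function, use the integral constraint from the normalization to fix the sign, and integrate. The paper uses the interpolation $s\mapsto A^{s}\tilde A^{1-s}$ rather than linear $s$, but that is cosmetic.

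One inference is backwards, however: you twice deduce that the augmented linearization is injective, equivalently $\dim\ker L=1$, \emph{from} the uniqueness in Theorem~\ref{Uniqueness theorem alpha negative}. Global uniqueness of a nonlinear problem does not imply that the linearization at a solution has one-dimensional kernel (think of $x\mapsto x^{3}$ at $0$). The paper closes this step differently: because $\ddbar\omega^{n-2}=0$, the adjoint $L^{*}$ has \emph{no zero-order term} (this is the computation (\ref{Definition of L^*}) in the openness argument), so the strong maximum principle applies to $L^{*}$ directly and gives $\ker L^{*}\subset\{\text{constants}\}$; combined with $L^{*}(1)=0$ and $\mathrm{Ind}(L)=0$ one gets $\dim\ker L=1$. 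In fact your own Krein--Rutman paragraph already contains a correct independent argument for this (simplicity of the principal eigenvalue of $L^{*}$, exhibited by $L^{*}(1)=0$, plus $\lambda_{0}(L)=\lambda_{0}(L^{*})$), so you should simply drop the appeal to uniqueness and rely on that. For the positivity of the generator of $\ker L$, the paper writes tersely ``by the strong maximum principle, we may assume that $u_{0}>0$''; your Krein--Rutman formulation is a legitimate elaboration of exactly that step.
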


\begin{remark}
In addition, if $\textrm{tr}_{\omega}\rho\geq0$, both Theorem \ref{Uniqueness theorem alpha negative} and \ref{Monotonicity theorem} are still true when $L^{n}$ normalization condition (\ref{L^n normalization condition}) is replaced by a weaker condition $\|e^{-\vp}\|_{L^{1}}=A$ (see Remark \ref{Special case remark}). In particular,  Theorem \ref{Uniqueness theorem alpha negative} is  an improvement of main results in  \cite{FuY07, PPZ16b} in K\"{a}hler case.
\end{remark}

Roughly speaking, Theorem \ref{Uniqueness theorem alpha negative} and \ref{Monotonicity theorem} are consequences of a priori estimates for $\varphi$.  Compared to the proof of   Theorem \ref{Existence and Uniqueness Theorem}, we need to derive a strong $C^0, C^1, C^2$ estimates without  (\ref{Restrictions}).
In order to use
 the blow-up argument for  $C^1, C^2$ estimates,  we  establish  an estimate
\begin{equation}\label{c1-c2-estimate}
\sup_{M}|\de\dbar\vp|_{g} \leq C_A(1+\sup_{M}|\de\vp|_{g}^{2}),
\end{equation}
where $C_{A}$ is a constant depending only on $A$, $\alpha$, $\rho$, $\mu$ and $(M,\omega)$.
Such a kind of estimate (\ref{c1-c2-estimate}) was widely studied in  Monge-Amp\`ere equations and  $\sigma_k$ Hessian equations (cf. \cite{WY09,HMW10,DK12,TW17,PPZ16b,Sze15,STW17}). In our case, we adopt an auxiliary function involving the largest eigenvalue $\lambda_{1}$  of  $\ti{\omega}$ with respect to
$\omega$. This advantage  gives us enough good third order terms to deal with the bad terms when we use the maximal principle as in \cite{CHZ17}.  Also  the sign of $\alpha$ plays a crucial role. We note that (\ref{Fu-Yau equation}) is not degenerate when $\alpha<0$ (cf. (\ref{Fu-Yau equation 2-nd Hessian type})).

 As we know, (\ref{Fu-Yau equation}) can be rewritten as a  $\sigma_{2}$-type  equation on a  Hermitian manifold with  function $F$  at the right hand  including the gradient term of solution (cf. (\ref{Hessian type of Fu-Yau equation}), (\ref{Fu-Yau equation 2-nd Hessian type})). In \cite{CHZ17}, we generalized  $\sigma_{2}$-equation to  an almost Hermitian manifold and   obtained  a $C^2$-estimate  for the  solutions, which depends only on  the gradient of solutions  and background data.
    It is interesting to studying  the  $C^2$-estimate  for solutions of $\sigma_{k}$-type equation in space of $\Gamma_k$ $(k\ge 2)$  of  $k$-convex functions  (cf. \cite{GRW15, Sze15, PPZ15, PPZ16a}, etc.).   But it seems nontrivial to generalize the method for    $\sigma_{2}$-equation to $\sigma_{k}$-equation even on  K\"{a}hler manifolds  if $F$  involves the gradient term of solution.

The organization of paper is as follows.  In Section 2 and Section 3,   we give the $C^0$-estimate, and $C^1, C^2$-estimates for  solutions of
(\ref{Fu-Yau equation}) under the condition in Theorem \ref{Existence and Uniqueness Theorem}, respectively.  Theorem \ref{Existence and Uniqueness Theorem} is proved in Section 4.  In Section 5, we improve the  $C^0$-estimate in Section 2 in case of $\alpha <0$. In Section 6, we give another
method to get strong $C^1, C^2$-estimates  in case of $\alpha <0$.   Theorem \ref{Uniqueness theorem alpha negative} and  Theorem \ref{Monotonicity theorem} will be proved in Section 7, 8, respectively.

\section{Zero order estimate (I)}
In this section, we use the Moser iteration to do  $C^{0}$-estimate for solutions  $\vp$ of (\ref{Fu-Yau equation}). First,  we prove a lemma for $L^{2}$-estimate of gradient $\de\vp$.

\begin{lemma}\label{Zero order estimate lemma}
Let $\vp$ be a smooth solution of (\ref{Fu-Yau equation}) satisfying (\ref{Elliptic condition}).  Let  $f(t)$ be a smooth  function in $\mathbb R^1$  such that $f'\geq0$. Then we have
\begin{align}\label{l2-gradient}
        & \int_{M}f'(\vp)\sqrt{-1}\de\vp\wedge\dbar\vp\wedge(e^{\vp}\omega+\alpha e^{-\vp}\rho)\wedge\omega^{n-2} \notag\\
&\leq   -2\int_{M}f'(\vp)\sqrt{-1}\de\vp\wedge(e^{\vp}\dbar\omega-\alpha e^{-\vp}\dbar\rho)\wedge\omega^{n-2}\notag \\
        & -2\int_{M}f'(\vp)\dbar\vp\wedge(e^{\vp}\omega-\alpha e^{-\vp}\rho)\wedge\sqrt{-1}\de\omega^{n-2}
          +2\int_{M}f(\vp)\mu\frac{\omega^{n}}{n!}.
\end{align}

\end{lemma}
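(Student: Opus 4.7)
The plan is to multiply (\ref{Fu-Yau equation}) by $2f(\vp)$, integrate over $M$, and carry out two integrations by parts, invoking the elliptic condition rewritten as $2n\alpha\ddbar\vp = \ti{\omega} - (e^\vp\omega + \alpha e^{-\vp}\rho)$, the astheno-K\"ahler identity $\ddbar\omega^{n-2}=0$, and positivity that follows from $\ti{\omega}\in\Gamma_{2}(M)$. Writing $\sigma := e^\vp\omega - \alpha e^{-\vp}\rho$, the first IBP (Stokes on $f\dbar\sigma\wedge\omega^{n-2}$) produces
$$2\int_M f\ddbar\sigma\wedge\omega^{n-2} = -2\sqrt{-1}\int_M f'\de\vp\wedge\dbar\sigma\wedge\omega^{n-2} + 2\sqrt{-1}\int_M f\dbar\sigma\wedge\de\omega^{n-2},$$
and expanding $\dbar\sigma = \dbar\vp\wedge(e^\vp\omega+\alpha e^{-\vp}\rho) + (e^\vp\dbar\omega - \alpha e^{-\vp}\dbar\rho)$ in the first right-hand integral extracts (up to sign) the LHS of (\ref{l2-gradient}) together with the first torsion term of the RHS. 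For the remaining $\int_M f\dbar\sigma\wedge\de\omega^{n-2}$, another application of Stokes to $\dbar(f\sigma\wedge\de\omega^{n-2})$ (the piece $f\sigma\wedge\dbar\de\omega^{n-2}$ vanishing by astheno-K\"ahler) converts it into $-\int_M f'\dbar\vp\wedge\sigma\wedge\de\omega^{n-2}$, producing the second torsion term of the RHS.

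The second IBP is on the nonlinear term $2n\alpha\int_M f(\ddbar\vp)^{2}\wedge\omega^{n-2} = -2n\alpha\int_M f(\de\dbar\vp)^{2}\wedge\omega^{n-2}$. Stokes on $f\dbar\vp\wedge\de\dbar\vp\wedge\omega^{n-2}$ gives a gradient-type piece plus a boundary-type integral $\int_M f\dbar\vp\wedge\de\dbar\vp\wedge\de\omega^{n-2}$. This boundary-type integral \emph{vanishes} under astheno-K\"ahler: applying Stokes to $\eta := f\dbar\vp\wedge\de\vp\wedge\de\omega^{n-2}$, the $\de\dbar\vp\wedge\de\vp$-contribution dies by bidegree, $d\de\omega^{n-2}=0$ by astheno-K\"ahler, and the $df\wedge\dbar\vp\wedge\de\vp$-contribution dies because only its $\dbar f$-projection survives by bidegree, which then meets a second $\dbar\vp$ (so $\dbar\vp\wedge\dbar\vp=0$). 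One is thus left with $2n\alpha\int_M f(\ddbar\vp)^{2}\wedge\omega^{n-2} = -2n\alpha\sqrt{-1}\int_M f'\de\vp\wedge\dbar\vp\wedge\ddbar\vp\wedge\omega^{n-2}$; substituting $2n\alpha\ddbar\vp = \ti{\omega} - (e^\vp\omega+\alpha e^{-\vp}\rho)$ splits this cleanly into an $\ti{\omega}$-term and a $(e^\vp\omega+\alpha e^{-\vp}\rho)$-gradient term that combines with the first IBP.

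Assembling all contributions yields the exact equality
$$\int_M f'\sqrt{-1}\de\vp\wedge\dbar\vp\wedge(e^\vp\omega+\alpha e^{-\vp}\rho)\wedge\omega^{n-2} + \int_M f'\sqrt{-1}\de\vp\wedge\dbar\vp\wedge\ti{\omega}\wedge\omega^{n-2} = \text{RHS of (\ref{l2-gradient})}.$$
The extra $\ti{\omega}$-term on the left is nonnegative: in local $\omega$-unitary coordinates diagonalizing $\ti{\omega}$ with eigenvalues $\lambda_{i}$, the mixed-trace identity gives $\sqrt{-1}\de\vp\wedge\dbar\vp\wedge\ti{\omega}\wedge\omega^{n-2} = \frac{1}{n(n-1)}\bigl(|\nabla\vp|_{g}^{2}\sum_{i}\lambda_{i} - \sum_{i}\lambda_{i}|\vp_{i}|^{2}\bigr)\omega^{n}$, and the $\Gamma_{2}(M)$-condition forces $\max_{i}\lambda_{i} \leq \sum_{i}\lambda_{i}$ (from $\sigma_{1}(\ti{\omega})^{2} > \sum_{i}\lambda_{i}^{2}$ combined with $\sigma_{1}>0$). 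Together with $f'\geq 0$, the term is $\geq 0$ and may be dropped to yield (\ref{l2-gradient}). The main obstacle is careful bookkeeping -- tracking signs, factors of $\sqrt{-1}$, and bidegree vanishings across several IBPs -- with the astheno-K\"ahler hypothesis intervening precisely where needed, namely to annihilate the otherwise-obstructing $\dbar\vp\wedge\ddbar\vp\wedge\de\omega^{n-2}$ contribution.
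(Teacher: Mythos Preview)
Your proposal is correct and follows essentially the same approach as the paper: both proofs hinge on the same three ingredients (Stokes' formula applied twice, the astheno-K\"ahler identity $\ddbar\omega^{n-2}=0$ to kill the torsion term $\int_M f\dbar\vp\wedge\ddbar\vp\wedge\de\omega^{n-2}$, and nonnegativity of $\int_M f'\sqrt{-1}\de\vp\wedge\dbar\vp\wedge\ti{\omega}\wedge\omega^{n-2}$ from $\ti\omega\in\Gamma_2(M)$). The only difference is organizational: the paper opens with the positivity fact and then expands via the equation, whereas you start from the equation, assemble the exact identity, and drop the positive $\ti\omega$-term at the end; your eigenvalue justification of that positivity is also more explicit than the paper's one-line invocation.
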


\begin{proof}
Since $\ti{\omega}\in\Gamma_{2}(M)$ and $f'(\vp)\geq0$, it is clear that
\begin{equation*}
\int_{M}f'(\vp)\sqrt{-1}\de\vp\wedge\dbar\vp\wedge\ti{\omega}\wedge\omega^{n-2} \geq 0.
\end{equation*}
By the Stokes' formula , it follows
\begin{equation*}
\begin{split}
        & \int_{M}f'(\vp)\sqrt{-1}\de\vp\wedge\dbar\vp\wedge(e^{\vp}\omega+\alpha e^{-\vp}\rho)\wedge\omega^{n-2} \\
& \geq    -2n\alpha\int_{M}\sqrt{-1}\de f(\vp)\wedge\dbar\vp\wedge\ddbar\vp\wedge\omega^{n-2} \\
   &= -2n\alpha\int_{M}f(\vp)\dbar\vp\wedge\ddbar\vp\wedge\sqrt{-1}\de\omega^{n-2} \\
      &\ {} \ +2n\alpha\int_{M}f(\vp)\ddbar\vp\wedge\ddbar\vp\wedge\omega^{n-2}.
\end{split}
\end{equation*}
On the other hand, by (\ref{Fu-Yau equation}), we have
\begin{equation*}
\begin{split}
     & 2n\alpha\int_{M}f(\vp)\ddbar\vp\wedge\ddbar\vp\wedge\omega^{n-2} \\
= {} & -2\int_{M}f(\vp)\ddbar(e^{\vp}\omega-\alpha e^{-\vp}\rho)\wedge\omega^{n-2}-2\int_{M}f(\vp)\mu\frac{\omega^{n}}{n!}.
\end{split}
\end{equation*}
Thus
\begin{equation}\label{Zero order estimate equ 1}
\begin{split}
        & \int_{M}f'(\vp)\sqrt{-1}\de\vp\wedge\dbar\vp\wedge(e^{\vp}\omega+\alpha e^{-\vp}\rho)\wedge\omega^{n-2} \\
\geq {} & -2n\alpha\int_{M}f(\vp)\dbar\vp\wedge\ddbar\vp\wedge\sqrt{-1}\de\omega^{n-2} \\
        & -2\int_{M}f(\vp)\ddbar(e^{\vp}\omega-\alpha e^{-\vp}\rho)\wedge\omega^{n-2}-2\int_{M}f(\vp)\mu\frac{\omega^{n}}{n!}.
\end{split}
\end{equation}

For the first term on the right hand of the inequality (\ref{Zero order estimate equ 1}), we have
\begin{equation}\label{Zero order estimate equ 2}
\begin{split}
& -2n\alpha\int_{M}f(\vp)\dbar\vp\wedge\ddbar\vp\wedge\sqrt{-1}\de\omega^{n-2} \\
= {} & 2n\alpha\int_{M}\dbar(f(\vp)\dbar\vp)\wedge\sqrt{-1}\de\vp\wedge\sqrt{-1}\de\omega^{n-2} \\
     & +2n\alpha\int_{M}f(\vp)\dbar\vp\wedge\sqrt{-1}\de\vp\wedge\sqrt{-1}~\dbar\de\omega^{n-2} \\
= {} & 2n\alpha\int_{M}f(\vp)\sqrt{-1}\de\vp\wedge\dbar\vp\wedge\ddbar\omega^{n-2}.
\end{split}
\end{equation}
For the second term of (\ref{Zero order estimate equ 1}), we compute
\begin{equation*}
\begin{split}
     & -2\int_{M}f(\vp)\ddbar(e^{\vp}\omega-\alpha e^{-\vp}\rho)\wedge\omega^{n-2} \\
= {} & 2\int_{M}\sqrt{-1}\de f(\vp)\wedge\dbar(e^{\vp}\omega-\alpha e^{-\vp}\rho)\wedge\omega^{n-2} \\
     & -2\int_{M}f(\vp)\dbar(e^{\vp}\omega-\alpha e^{-\vp}\rho)\wedge\sqrt{-1}\de\omega^{n-2}.
\end{split}
\end{equation*}
and then,
\begin{equation}\label{Zero order estimate equ 3}
\begin{split}
     & -2\int_{M}f(\vp)\ddbar(e^{\vp}\omega-\alpha e^{-\vp}\rho)\wedge\omega^{n-2} \\
= {} & 2\int_{M}f'(\vp)\sqrt{-1}\de\vp\wedge\dbar\vp\wedge(e^{\vp}\omega+\alpha e^{-\vp}\rho)\wedge\omega^{n-2} \\
     & +2\int_{M}f'(\vp)\sqrt{-1}\de\vp\wedge(e^{\vp}\dbar\omega-\alpha e^{-\vp}\dbar\rho)\wedge\omega^{n-2} \\
     & +2\int_{M}\dbar f(\vp)\wedge(e^{\vp}\omega-\alpha e^{-\vp}\rho)\wedge\sqrt{-1}\de\omega^{n-2} \\
     & -2\int_{M}f(\vp)(e^{\vp}\omega-\alpha e^{-\vp}\rho)\wedge\ddbar\omega^{n-2}.
\end{split}
\end{equation}
Thus substituting (\ref{Zero order estimate equ 2}) and (\ref{Zero order estimate equ 3}) into (\ref{Zero order estimate equ 1}), we see that
\begin{equation*}
\begin{split}
        & \int_{M}f'(\vp)\sqrt{-1}\de\vp\wedge\dbar\vp\wedge(e^{\vp}\omega+\alpha e^{-\vp}\rho)\wedge\omega^{n-2} \\
\leq {} & -2\int_{M}f'(\vp)\sqrt{-1}\de\vp\wedge(e^{\vp}\dbar\omega-\alpha e^{-\vp}\dbar\rho)\wedge\omega^{n-2} \\
        & -2\int_{M}f'(\vp)\dbar\vp\wedge(e^{\vp}\omega-\alpha e^{-\vp}\rho)\wedge\sqrt{-1}\de\omega^{n-2} \\
        & +2\int_{M}f(\vp)(e^{\vp}\omega-\alpha e^{-\vp}\rho)\wedge\ddbar\omega^{n-2} \\
        & -2n\alpha\int_{M}f(\vp)\sqrt{-1}\de\vp\wedge\dbar\vp\wedge\ddbar\omega^{n-2}+2\int_{M}f(\vp)\mu\frac{\omega^{n}}{n!}.
\end{split}
\end{equation*}
Note that  $\ddbar\omega^{n-2}=0$. Hence,  we get (\ref{l2-gradient}).
\end{proof}

By Lemma \ref{Zero order estimate lemma}, we prove the following  $C^0$-estimate.

\begin{proposition}\label{Zero order estimate}
Let $\vp$ be a smooth solution of (\ref{Fu-Yau equation}) satisfying (\ref{Elliptic condition}) and (\ref{Normalization condition}). There exist constants $\delta_{0}$, $A_{0}$ and $M_{0}$ depending only on $\alpha$, $\rho$, $\mu$ and $(M,\omega)$ such that if
\begin{equation}\label{two-condition}
e^{-\vp}\leq\delta_{0} ~\text{~and~}~ \|e^{-\vp}\|_{L^{1}} = A \leq A_{0},
\end{equation}
then
\begin{equation*}
\frac{A}{M_{0}} \leq e^{-\vp} \leq M_{0}A.
\end{equation*}
\end{proposition}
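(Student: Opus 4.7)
The plan is to derive a Moser iteration from Lemma~\ref{Zero order estimate lemma} and run it twice: once to upper bound $e^{-\vp}$ starting from the given $L^{1}$ normalization, and once to lower bound $e^{-\vp}$ starting from an auxiliary $L^{k_{0}}$-bound for $e^{\vp}$ that must first be manufactured. In both directions I combine $e^{-\vp}\leq\delta_{0}$ (to dominate exponential weights) with Young's inequality (to absorb gradient pieces from the non-K\"ahler terms on the right of (\ref{l2-gradient})) and the Sobolev inequality on $(M,\omega)$.

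For the upper bound $e^{-\vp}\leq M_{0}A$, set $v=e^{-\vp/2}$ and apply Lemma~\ref{Zero order estimate lemma} with $f(\vp)=-e^{-(p+1)\vp}/(p+1)$, $p\geq 1$. Using that $\mn\de\vp\wedge\db\vp\wedge\omega^{n-1}$ is a positive multiple of $|\nabla\vp|_{g}^{2}\omega^{n}$, the left side of (\ref{l2-gradient}) becomes a positive multiple of $\frac{1}{p^{2}}\int_{M}|\nabla(v^{p})|_{g}^{2}\omega^{n}$, up to an $O(\delta_{0})$ perturbation from the $\alpha e^{-\vp}\rho$-piece. On the right, each term containing $|\de\vp|$ splits by Young's inequality into an $\epsilon$-piece absorbed into the left and a remainder of the form $C\int v^{2p}\omega^{n}$, after dominating exponential weights by powers of $\delta_{0}$; the $\int f\mu$-term is likewise $O(\delta_{0})\int v^{2p}\omega^{n}$. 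The outcome is the Moser recursion
$$
\int_{M}|\nabla(v^{p})|_{g}^{2}\,\omega^{n}\leq Cp^{a}\int_{M}v^{2p}\,\omega^{n}
$$
for some universal exponent $a$. Combined with Sobolev and iterated from $\|v\|_{L^{2}}^{2}=\|e^{-\vp}\|_{L^{1}}=A$, this yields $\|v\|_{L^{\infty}}^{2}\leq M_{0}A$.

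For the lower bound $e^{-\vp}\geq A/M_{0}$, equivalently $\sup_{M}e^{\vp}\leq M_{0}/A$, set $w=e^{\vp/2}$ and apply Lemma~\ref{Zero order estimate lemma} with $f(\vp)=e^{(k-1)\vp}/(k-1)$, $k>1$. The same computation, where now the negative-power remainders are disposed of by $e^{-\vp}\leq\delta_{0}$, yields
$$
\int_{M}|\nabla(w^{k})|_{g}^{2}\,\omega^{n}\leq Ck^{a}\int_{M}w^{2k}\,\omega^{n},
$$
which Moser-iterates to $\|w\|_{L^{\infty}}\leq C\|w\|_{L^{k_{0}}}$ for any fixed small $k_{0}>0$. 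The only remaining ingredient is a starting estimate of the form $\int_{M}e^{k_{0}\vp/2}\,\omega^{n}\leq CA^{-k_{0}/2}$ for some small $k_{0}>0$. I obtain it by running Lemma~\ref{Zero order estimate lemma} one more time with $f(\vp)=e^{s\vp}$ for a judiciously small $s>0$: the smallness of $s$ together with the hypotheses $A\leq A_{0}$ and $e^{-\vp}\leq\delta_{0}$ allow every dangerous right-hand side term to be absorbed, while the normalization $\|e^{-\vp}\|_{L^{1}}=A$ supplies the $A^{-k_{0}/2}$ factor. Feeding this into the previous iteration gives $\sup_{M}e^{\vp}\leq M_{0}/A$.

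The real obstacle is the starting $L^{k_{0}}$-estimate in the lower-bound step: the normalization gives no direct control on any positive-exponent integral of $e^{\vp}$, so it has to be extracted from the equation itself via a carefully chosen test function, and the three small parameters $A_{0}$, $\delta_{0}$, and $s$ must be balanced so that every bad term in (\ref{l2-gradient}) becomes absorbable. This balancing is exactly where the smallness hypotheses in the statement are used in an essential way.
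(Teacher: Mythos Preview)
Your upper-bound argument and the Moser recursion for $e^{\vp}$ match the paper's proof exactly. The genuine gap is in the final paragraph: you do not explain, and I do not see, how a further application of Lemma~\ref{Zero order estimate lemma} with $f(\vp)=e^{s\vp}$ could ever produce the seed estimate $\|e^{\vp}\|_{L^{k_{0}}}\le C/A$. Any such choice of $f$ yields only another gradient-to-function inequality of the form $\int e^{k\vp}|\de\vp|^{2}\le C(k)\int e^{k\vp}$; feeding this into Sobolev or Poincar\'e relates different $L^{p}$-norms of $e^{\vp}$ to one another, but provides no anchor that ties them to $A$. The normalization is on $e^{-\vp}$, not $e^{\vp}$, so no amount of ``balancing small parameters'' in (\ref{l2-gradient}) will by itself inject an $A^{-1}$.

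The paper supplies the missing idea: first use the already-proved bound $e^{-\vp}\le CA$ to deduce that the set $U=\{e^{-\vp}\ge A/2\}$ has volume bounded below by a constant independent of $A$ (since $A=\int e^{-\vp}\le CA\,\mathrm{vol}(U)+\tfrac{A}{2}$). On $U$ one has $e^{\vp}\le 2/A$, so $\int_{U}e^{k_{0}\vp/2}$ is under control. Then the Poincar\'e inequality combined with the gradient bound (\ref{Supremum estimate equ 1}) for small $k_{0}$ gives $\int e^{k_{0}\vp}\le (1-Ck_{0}^{2})^{-1}(\int e^{k_{0}\vp/2})^{2}$, and a Cauchy--Schwarz splitting of $\int e^{k_{0}\vp/2}$ over $U$ and $M\setminus U$ closes the loop to yield $\|e^{\vp}\|_{L^{k_{0}}}\le C/A$. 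This measure-theoretic step, not a further test-function trick, is what converts control of $e^{-\vp}$ into control of $e^{\vp}$.
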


\begin{proof}
First, we estimate the positive infimum of $e^{\vp}$. At the expense of decreasing $\delta_{0}$, we assume that
\begin{equation}\label{Infimum estimate equ 1}
e^{\vp}\omega+\alpha e^{-\vp}\rho \geq \frac{1}{2}e^{\vp}\omega.
\end{equation}
Then  by taking $f(\vp)=-e^{-(k+1)\vp}$  ($k\geq1$) in Lemma \ref{Zero order estimate lemma},  we have
\begin{equation*}
\begin{split}
\int_{M}e^{-k\vp}|\de\vp|_{g}^{2}\omega^{n}
\leq C\int_{M}e^{-k\vp}|\de\vp|_{g}\omega^{n}+C\int_{M}e^{-(k+1)\vp}\omega^{n}.
\end{split}
\end{equation*}
By the Cauchy-Schwarz inequality, it follows that
\begin{equation}\label{e-phi}
\int_{M}e^{-k\vp}|\de\vp|_{g}^{2}\omega^{n} \leq C\int_{M}e^{-k\vp}\omega^{n}.
\end{equation}
Hence, by the above relation together with the Sobolev inequality,  one can use  the Moser iteration to derive
\begin{equation}\label{Infimum estimate equ 2}
\|e^{-\vp}\|_{L^{\infty}} \leq C\|e^{-\vp}\|_{L^{1}} = CA.
\end{equation}

Next we  estimate the supremum of $e^{\vp}$. As in the proof of (\ref{e-phi}), by  taking
\begin{equation*}
f(\vp) = \frac{1}{k-1}e^{(k-1)\vp}
\end{equation*}
in Lemma \ref{Zero order estimate lemma}, we can also get
\begin{equation}\label{Supremum estimate equ 1}
\begin{split}
\int_{M}e^{k\vp}|\de\vp|_{g}^{2}\omega^{n} \leq C\left(1+\frac{1}{|k-1|}\right)\int_{M}e^{k\vp}\omega^{n}.
\end{split}
\end{equation}

\begin{claim}\label{Claim} There exists a positive  constant $k_{0}\ll1$ depending only on $\alpha$, $\rho$, $\mu$ and $(M,\omega)$ such that
\begin{equation}\label{L^k0 estimate}
\|e^{\vp}\|_{L^{k_{0}}} \leq \frac{C}{A}.
\end{equation}
\end{claim}

By  (\ref{Supremum estimate equ 1}),  we  use  the Moser iteration to obtain
\begin{equation*}
\|e^{\vp}\|_{L^{\infty}} \le C_{0} \|e^{\vp}\|_{L^2}.
\end{equation*}
By (\ref{L^k0 estimate}),  it follows
 $$  \|e^{\vp}\|_{L^{\infty}} \leq C_{0}^{\frac{2}{k_{0}}} \|e^{\vp}\|_{L^{k_{0}}} \leq \frac{C}{A}.$$
Thus, the proof of Proposition  \ref{Zero order estimate} is  complete.

It remains to prove  Claim \ref{Claim}. Without loss of generality, we assume that $\vol(M,\omega)=1$. We define
\begin{equation*}
U = \{x\in M~|~e^{-\vp(x)}\geq \frac{A}{2} \}.
\end{equation*}
Then by  (\ref{Infimum estimate equ 2}),  we have
\begin{equation*}
\begin{split}
A =    {} & \int_{M}e^{-\vp}\omega^{n} \\
            =    {} & \int_{U}e^{-\vp}\omega^{n}+\int_{M\setminus U}e^{-\vp}\omega^{n} \\
            \leq {} & e^{-\inf_{M}\vp}\vol(U)+\frac{A}{2}(1-\vol(U)) \\
            \leq {} & \left(C-\frac{1}{2}\right)A\vol(U)+\frac{A}{2},
\end{split}
\end{equation*}
which implies
\begin{equation}\label{Supremum estimate equ 2}
\vol(U) \geq \frac{1}{C_{0}}.
\end{equation}
On the other hand, by the Poincar\'{e} inequality, we have
\begin{equation*}
\int_{M}e^{k_{0}\vp}\omega^{n}-\left(\int_{M}e^{\frac{k_{0}\vp}{2}}\omega^{n}\right)^{2}
\leq C\int_{M}|\de e^{\frac{k_{0}\vp}{2}}|_{g}^{2}\omega^{n}
\leq Ck_{0}^{2}\int_{M}e^{k_{0}\vp}\omega^{n}.
\end{equation*}
It then follows that
\begin{equation}\label{Supremum estimate equ 3}
\int_{M}e^{k_{0}\vp}\omega^{n}
\leq \frac{1}{1-C_{0}k_{0}^{2}}\left(\int_{M}e^{\frac{k_{0}\vp}{2}}\omega^{n}\right)^{2}.
\end{equation}
Combining this with the Cauchy-Schwarz inequality, we obtain
\begin{equation*}
\begin{split}
& \left(\int_{M}e^{\frac{k_{0}\vp}{2}}\omega^{n}\right)^{2} \\
\leq {} & (1+C_{0})\left(\int_{U}e^{\frac{k_{0}\vp}{2}}\omega^{n}\right)^{2}
          +\left(1+\frac{1}{C_{0}}\right)\left(\int_{M\setminus U}e^{\frac{k_{0}\vp}{2}}\omega^{n}\right)^{2}\\
\leq {} & \frac{(1+C_{0})2^{k_{0}}}{A^{k_{0}}}(\vol(U))^{2}+\left(1+\frac{1}{C_{0}}\right)(1-\vol(U))^{2}\int_{M}e^{k_{0}\vp}\omega^{n}\\
\leq {} & \frac{(1+C_{0})2^{k_{0}}}{A^{k_{0}}}
          +\left(1-\frac{1}{C_{0}^{2}}\right)\frac{1}{1-C_{0}k_{0}^{2}}\left(\int_{M}e^{\frac{k_{0}\vp}{2}}\omega^{n}\right)^{2}.
\end{split}
\end{equation*}
By choosing  $k_{0}\ll 1$,  we see that
\begin{equation*}
\left(\int_{M}e^{\frac{k_{0}\vp}{2}}\omega^{n}\right)^{2} \leq \frac{C}{A^{k_{0}}}.
\end{equation*}
Thus,  we get from (\ref{Supremum estimate equ 3}),
\begin{equation*}
\int_{M}e^{k_{0}\vp}\omega^{n} \leq \frac{C}{A^{k_{0}}}.
\end{equation*}
 Claim \ref{Claim} is proved.
\end{proof}

\section{First and second order estimates (I)}

In this section, we give a sketch of proofs of  $C^1, C^2$ estimates of $\varphi$.  As in \cite {CHZ18}, the basic idea is to  rewrite (\ref{Fu-Yau equation}) as a $\sigma_2$-type  equation,
\begin{equation}\label{Hessian type of Fu-Yau equation}
\sigma_{2}(\ti{\omega}) = \frac{n(n-1)}{2}\left(e^{2\vp}-4\alpha e^\varphi|\de\vp|_{g}^{2}\right)+\frac{n(n-1)}{2}f,
\end{equation}
where
\begin{equation}\label{Definition of f}
\begin{split}
  & \ f\omega^{n} \\
= & \ 2\alpha\rho\wedge\omega^{n-1}+\alpha^{2}e^{-2\vp}\rho^{2}\wedge\omega^{n-2}-4n\alpha\mu\frac{\omega^{n}}{n!} \\
+ & \ 4n\alpha^{2}e^{-\vp}\sqrt{-1}\left(\de\vp\wedge\dbar\vp\wedge\rho-\de\vp\wedge\dbar\rho
       -\de\rho\wedge\dbar\vp+\de\dbar\rho\right)\wedge\omega^{n-2} \\[1mm]
+ & \ 4n\alpha e^{\vp}\sqrt{-1}\left(\de\omega\wedge\dbar\vp+\de\vp\wedge\dbar\omega+\de\dbar\omega\right)\wedge\omega^{n-2}.
\end{split}
\end{equation}
As in \cite{CHZ18}, we define $\hat{\omega}=e^{-\vp}\ti{\omega}$. Then (\ref{Hessian type of Fu-Yau equation}) becomes
\begin{equation}\label{New Hessian type of Fu-Yau equation}
\sigma_{2}(\hat{\omega}) = \frac{n(n-1)}{2}\left(1-4\alpha e^{-\vp}|\de\vp|_{g}^{2}\right)+\frac{n(n-1)}{2}e^{-2\vp}f.
\end{equation}
Since $\omega$ is  not K\"{a}hler,    the function $f$  is  more complicated than one in \cite{CHZ18}. Precisely, more terms involving $e^{\vp}$ and $\de\vp$ appears. However, for the right hand side of (\ref{New Hessian type of Fu-Yau equation}), the leading term is still $-2n(n-1)\alpha e^{-\vp}|\de\vp|_{g}^{2}$. Thus  we will obtain a similar inequality as in K\"{a}hler case when we differentiate (\ref{New Hessian type of Fu-Yau equation}). This is  why we  can  prove  an analogy of \cite[Propsition 3.1, 4.1]{CHZ18} as follows.

\begin{proposition}\label{First and second order estimate}
Let $\vp$ be a smooth solution of (\ref{Fu-Yau equation}) satisfying (\ref{Elliptic condition}) and $\frac{1}{M_{0}A}\leq e^{-\vp}\leq M_{0}A$ for some uniform constant $M_0$. There exist uniform constants $D_{0}$ and $C_{0}$ such that if
\begin{equation}\label{condition of first and second order estimate}
|\de\dbar\vp|_{g} \leq D, ~~D_{0}\leq D \text{~and~} A\leq A_{D}:=\frac{1}{C_{0}M_{0}D},
\end{equation}
then
\begin{equation*}
|\de\vp|_{g}^{2}\leq M_{1} \text{~and~} |\de\dbar\vp|_{g} \leq \frac{D}{2}.
\end{equation*}
\end{proposition}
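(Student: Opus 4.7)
The plan is to transplant the maximum principle argument of \cite{CHZ18} to the present non-K\"ahler setting, working with the reformulation (\ref{New Hessian type of Fu-Yau equation}) of the Fu-Yau equation as a $\sigma_{2}$-type equation for $\hat{\omega}=e^{-\vp}\ti{\omega}$. Under the two-sided bound $\frac{1}{M_{0}A}\le e^{-\vp}\le M_{0}A$, every term in $e^{-2\vp}f$ coming from the torsion of $\omega$ (products of $\de\omega$, $\dbar\omega$, or $\de\dbar\omega$ with $\de\vp$) carries an effective prefactor of at most $e^{-\vp}\le M_{0}A$, once the $e^{\vp}$ appearing in (\ref{Definition of f}) has been cancelled by the overall $e^{-2\vp}$. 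Thus the rewritten equation matches the K\"ahler structure of \cite{CHZ18} up to error terms which are $O(A)+O(A|\de\vp|_{g})$, while the essential leading right-hand side $1-4\alpha e^{-\vp}|\de\vp|_{g}^{2}$ is preserved.

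For the $C^{1}$ bound, I would apply the maximum principle to a test function of the form $Q_{1}=\log|\de\vp|_{g}^{2}-B\vp$ (or an exponential variant) with $B$ a large constant. Differentiating (\ref{New Hessian type of Fu-Yau equation}) once and contracting with the linearized $\sigma_{2}$-operator $L$, which is strictly elliptic because $\ti{\omega}\in\Gamma_{2}(M)$, the leading term $-4\alpha e^{-\vp}|\de\vp|_{g}^{2}$ produces the good third-order term that dominates the commutator terms generated by the torsion of $\omega$. At a maximum point of $Q_{1}$, the bad contributions are at worst of size $(M_{0}A)|\de\vp|_{g}^{3}$ and $(M_{0}A)D|\de\vp|_{g}$; the hypothesis $A\le (C_{0}M_{0}D)^{-1}$ together with $|\de\dbar\vp|_{g}\le D$ is calibrated precisely to absorb them, giving a uniform $|\de\vp|_{g}^{2}\le M_{1}$.

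For the Hessian estimate, I would apply the maximum principle to an auxiliary function of the form $Q_{2}=\log\lambda_{1}(\hat{\omega})+\phi(|\de\vp|_{g}^{2})+\psi(\vp)$, with $\phi$ and $\psi$ chosen as in \cite{CHZ18}. Using the just-established $C^{1}$ bound, differentiating (\ref{New Hessian type of Fu-Yau equation}) twice and evaluating $LQ_{2}$ at its maximum yields an inequality whose only new ingredients beyond the K\"ahler case are torsion cross-terms of magnitude $O(DA)+O(D^{2}A)$. Once more, the threshold $A\le (C_{0}M_{0}D)^{-1}$ is chosen so that these are absorbed by the good third-order term produced by differentiating $-4\alpha e^{-\vp}|\de\vp|_{g}^{2}$ twice. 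After fixing $D_{0}$ (and $C_{0}$) sufficiently large, the resulting bound on $\lambda_{1}(\hat{\omega})$ translates into the improvement $|\de\dbar\vp|_{g}\le D/2$ needed to close the continuity argument.

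The main obstacle, relative to \cite{CHZ18}, is the careful bookkeeping of the torsion terms generated by integration by parts and by commuting covariant derivatives on the non-K\"ahler background. The essential observation making the scheme go through is that, after passing to $\hat{\omega}=e^{-\vp}\ti{\omega}$, each such torsion term acquires a factor $e^{-\vp}\le M_{0}A$, so that the two-sided $C^{0}$-estimate of Proposition \ref{Zero order estimate} lets the smallness of $A$ swallow it. No individual term requires a genuinely new idea beyond \cite{CHZ18}, but the quantitative balance $A\le(C_{0}M_{0}D)^{-1}$ recorded in (\ref{condition of first and second order estimate}) is precisely the price paid for accommodating the extra torsion contributions within the same maximum-principle framework.
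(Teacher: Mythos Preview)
Your broad strategy—rewrite the equation as a $\sigma_{2}$-equation for $\hat{\omega}=e^{-\vp}\ti{\omega}$ and run a maximum principle on auxiliary functions—matches the paper, but you have misidentified the mechanism by which the hypothesis $A\le(C_{0}M_{0}D)^{-1}$ enters, and your auxiliary functions differ from those actually used.

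The decisive structural fact you omit is that, in the frame (\ref{tilde gij}), one has $\hat{g}_{i\ov{i}}=1+\alpha e^{-2\vp}\rho_{i\ov{i}}+2n\alpha e^{-\vp}\vp_{i\ov{i}}$, so the bounds $e^{-\vp}\le M_{0}A$ and $|\de\dbar\vp|_{g}\le D$ together with $M_{0}AD\le C_{0}^{-1}$ force $\hat{\omega}$ to be uniformly close to $\omega$. Consequently the linearization satisfies $|F^{i\ov{i}}-(n-1)|\le\tfrac{1}{100}$ (this is (\ref{Bound of Fij}) in the paper). It is this near-Laplacian structure—not the smallness of torsion cross-terms—that drives every numerical coefficient in the subsequent inequalities (the $\tfrac{4}{5}$ in (\ref{Maximum principle gradient 1}), the cancellation in (\ref{Gradient estimate equation 10}), the $\tfrac{2(n-1)}{B}$ in (\ref{gradient lalpace}), etc.), and it is the \emph{only} place where the product $AD$, rather than $A$ alone, must be controlled. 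Your account of the role of $D$ as merely absorbing torsion contributions of size $(M_{0}A)D|\de\vp|_{g}$ does not capture this.

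The auxiliary functions also differ. For the $C^{1}$ step the paper uses $Q=\log|\de\vp|_{g}^{2}+\vp/B$ with a \emph{small} positive coefficient $1/B$, not $-B\vp$ with $B$ large. For the $C^{2}$ step the paper does \emph{not} use $\log\lambda_{1}$; it uses the much simpler quantity $Q=|\de\dbar\vp|_{g}^{2}+B|\de\vp|_{g}^{2}$, exactly as in \cite{CHZ18}, and the improvement $|\de\dbar\vp|_{g}\le D/2$ comes at the very end from $\max_{M}|\de\dbar\vp|_{g}^{2}\le CD\le D^{2}/4$ once $D_{0}$ is taken large. The $\log\lambda_{1}$ functional you propose appears only later in the paper (Section~6), for the separate purpose of proving an estimate when $\alpha<0$ \emph{without} the a~priori bound $|\de\dbar\vp|_{g}\le D$.

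Finally, you overweight the torsion. The paper explicitly remarks that, since the leading right-hand side of (\ref{New Hessian type of Fu-Yau equation}) is still $-2n(n-1)\alpha e^{-\vp}|\de\vp|_{g}^{2}$, the extra non-K\"ahler terms in $f$ are lower order after multiplication by $e^{-2\vp}$, and the proof is essentially identical to the K\"ahler case in \cite{CHZ18}; no delicate torsion bookkeeping is required.
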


\begin{proof}

i).  $C^1$-estimate.   As in \cite{CHZ18},  we consider  the following  auxiliary function,
\begin{equation*}
Q = \log|\de\vp|_{g}^{2}+\frac{\vp}{B},
\end{equation*}
where $B$ is a uniform constant to be determined. Let $x_{0}$ be the maximum point of $Q$ and $\{e_{i}\}_{i=1}^{n}$ be a local unitary frame in a neighbourhood of $x_{0}$ such that, at $x_{0}$,
\begin{equation}\label{tilde gij}
\tilde{g}_{i\ol{j}}
= \delta_{i\ol{j}}\tilde{g}_{i\ol{i}}
= \delta_{i\ol{j}}(e^{\vp}+\alpha e^{-\vp}\rho_{i\ov{i}}+2n\alpha \vp_{i\ov{i}}).
\end{equation}
We use the following notations
\begin{equation*}
F^{i\ov{j}} = \frac{\de\sigma_{2}(\hat{\omega})}{\de\hat{g}_{i\ov{j}}}
\text{~and~}
F^{i\ov{j},k\ov{l}} = \frac{\de^{2}\sigma_{2}(\hat{\omega})}{\de\hat{g}_{i\ov{j}}\de\hat{g}_{k\ov{l}}},
\end{equation*}
where $\hat{\omega}=e^{-\vp}\ti{\omega}$.
By (\ref{condition of first and second order estimate}), we know that
\begin{equation}\label{Bound of Fij}
\left|F^{i\ov{i}}-(n-1)\right| \leq \frac{1}{100}.
\end{equation}
Then by a direct calculation, we have
\begin{equation}\label{Maximum principle gradient 1}
\begin{split}
F^{i\ov{j}}e_{i}\ov{e}_{j}(|\de\vp|_{g}^{2})
\geq {} & \frac{4}{5}\sum_{i,j}(|e_{i}\ov{e}_{j}(\vp)|^{2}+|e_{i}e_{j}(\vp)|^{2})-C|\de\vp|_{g}^{2} \\
        & +\sum_{k}F^{i\ov{i}}\left(e_{k}(\vp_{i\ov{i}})\vp_{\ov{k}}+\ov{e}_{k}(\vp_{i\ov{i}})\vp_{k}\right).
\end{split}
\end{equation}

Next, we deal with the terms involving three derivatives of $\vp$ in \eqref{Maximum principle gradient 1}.
Differentiating    \eqref{New Hessian type of Fu-Yau equation}  along $e_{k}$ at $x_{0}$, we  have
\begin{equation}\label{first differentiate of equation}
\begin{split}
2n\alpha F^{i\ov{i}}e_{k}(\vp_{i\ov{i}})
= {} & 2\alpha e^{-\vp}\vp_{k}F^{i\ov{i}}\rho_{i\ov{i}}-\alpha e^{-\vp}F^{i\ov{i}}e_{k}(\rho_{i\ov{i}})+2n\alpha\vp_{k}F^{i\ov{i}}\vp_{i\ov{i}} \\[1mm]
     & -2\alpha n(n-1)(|\de\vp|_{g}^{2})_{k}+2\alpha n(n-1)|\de\vp|_{g}^{2}\vp_{k} \\
     & -n(n-1)e^{-\vp}f\vp_{k}+\frac{n(n-1)}{2}e^{-\vp}f_{k},
\end{split}
\end{equation}
which implies
\begin{equation}\label{Gradient estimate equation 7}
\begin{split}
        & \sum_{k}F^{i\ov{i}}\left(e_{k}(\vp_{i\ov{i}})\vp_{\ov{k}}+\ov{e}_{k}(\vp_{i\ov{i}})\vp_{k}\right) \\[1mm]
\geq {} & -Ce^{-\vp}|\de\vp|_{g}^{2}-Ce^{-\vp}|\de\vp|_{g}+2|\de\vp|_{g}^{2}F^{i\ov{i}}\vp_{i\ov{i}} \\[1mm]
        & +2(n-1)|\de\vp|_{g}^{4}
          -2(n-1)\textrm{Re}\left(\sum_{k}(|\de\vp|_{g}^{2})_{k}\vp_{\ov{k}}\right) \\
        & -\frac{n-1}{\alpha}e^{-\vp}|\de\vp|_{g}^{2}f
          +\frac{n-1}{2\alpha}e^{-\vp}\textrm{Re}\left(\sum_{k}f_{k}\vp_{\ov{k}}\right).
\end{split}
\end{equation}
For the third and fourth term of (\ref{Gradient estimate equation 7}), by the argument of \cite[(3.14)]{CHZ18}, we obtain
\begin{equation}\label{Gradient estimate equation 10}
\begin{split}
& 2|\de\vp|_{g}^{2}F^{i\ov{i}}\vp_{i\ov{i}}+2(n-1)|\de\vp|_{g}^{4} \\[2mm]
\geq {} & -\frac{1}{10}\sum_{i,j}|e_{i}\ov{e}_{j}(\vp)|^{2}-\left(Ce^{-2\vp}+\frac{1}{B}\right)|\de\vp|_{g}^{4}-C|\de\vp|_{g}^{2}.
\end{split}
\end{equation}
For the last two terms of (\ref{Gradient estimate equation 7}). By the similar calculation of \cite[(3.10)]{CHZ18} and the expression of $f$ (\ref{Definition of f}), at $x_{0}$, we get
\begin{equation}\label{the first order of f}
\begin{split}
& -\frac{n-1}{\alpha}e^{-\vp}|\de\vp|_{g}^{2}f+\frac{n-1}{2\alpha}e^{-\vp}\textrm{Re}\left(\sum_{k}f_{k}\vp_{\ov{k}}\right) \\[1mm]
\geq {} & -C\left(e^{-2\vp}|\de\vp|_{g}+e^{-2\vp}+|\de\vp|_{g}\right)\sum_{i,j}(|e_{i}\ov{e}_{j}(\vp)|+|e_{i}e_{j}(\vp)|) \\
        & -Ce^{-\vp}|\de\vp|_{g}^{4}-Ce^{-\vp}|\de\vp|_{g}^{3}-Ce^{-\vp}|\de\vp|_{g}^{2}-Ce^{-\vp}|\de\vp|_{g} \\[2mm]
        & -C|\de\vp|^{3}_{g}-C|\de\vp|^{2}_{g} \\[1mm]
\geq {} & -\frac{1}{10}\sum_{i,j}(|e_{i}\ov{e}_{j}(\vp)|^{2}+|e_{i}e_{j}(\vp)|^{2})-\left(Ce^{-\vp}+\frac{1}{2B}\right)|\de\vp|_{g}^{4} \\
        & -CB^{3},
\end{split}
\end{equation}
where we used the Cauchy-Schwarz inequality in the last inequality. Substituting (\ref{Gradient estimate equation 10}) and  (\ref{the first order of f}) into (\ref{Gradient estimate equation 7}), we derive
\begin{equation}\label{third term of gradient-lapalce}
\begin{split}
        & \sum_{k}F^{i\ov{i}}\left(e_{k}(\vp_{i\ov{i}})\vp_{\ov{k}}+\ov{e}_{k}(\vp_{i\ov{i}})\vp_{k}\right) \\
\geq {} & -\frac{1}{5}\sum_{i,j}(|e_{i}\ov{e}_{j}(\vp)|^{2}+|e_{i}e_{j}(\vp)|^{2})
          -2(n-1){\rm Re}\left(\sum_{k}(|\de\vp|_{g}^{2})_{k}\vp_{\ov{k}}\right) \\
        & -\left(Ce^{-\vp}+\frac{3}{2B}\right)|\de\vp|_{g}^{4}-C|\de\vp|_{g}^{2}-CB^{3}.
\end{split}
\end{equation}
Hence,  substituting this into \eqref{Maximum principle gradient 1}, we see that
\begin{equation}\label{Maximum principle gradient}
\begin{split}
& F^{i\ov{j}}e_{i}\ov{e}_{j}(|\de\vp|_{g}^{2}) \\
\geq {} & \frac{3}{5}\sum_{i,j}(|e_{i}\ov{e}_{j}(\vp)|^{2}+|e_{i}e_{j}(\vp)|^{2})
          -2(n-1)\textrm{Re}\left(\sum_{k}(|\de\vp|_{g}^{2})_{k}\vp_{\ov{k}}\right) \\
        & -\left(Ce^{-\vp}+\frac{3}{2B}\right)|\de\vp|_{g}^{4}-C|\de\vp|_{g}^{2}-CB^{3}.
\end{split}
\end{equation}
By the maximum principle, at $x_{0}$, we obtain
\begin{equation}\label{Gradient estimate equation 3}
\begin{split}
0 \geq {} & F^{i\ov{j}}e_{i}\ov{e}_{j}(Q) \\
  \geq {} & \frac{1}{2|\de\vp|_{g}^{2}}\sum_{i,j}(|e_{i}\ov{e}_{j}(\vp)|^{2}+|e_{i}e_{j}(\vp)|^{2})
            -\frac{2(n-1)\textrm{Re}\left(\sum_{k}(|\de\vp|_{g}^{2})_{k}\vp_{\ov{k}}\right)}{|\de\vp|_{g}^{2}} \\
          & -\frac{F^{i\ov{i}}|e_{i}(|\de\vp|_{g}^{2})|^{2}}{|\de\vp|_{g}^{4}}
            -\left(Ce^{-\vp}+\frac{1}{B}\right)|\de\vp|_{g}^{2}-C+\frac{1}{B}F^{i\ov{i}}e_{i}\ov{e}_{i}(\vp).
\end{split}
\end{equation}
On the other hand, by the fact $dQ=0$ and the Cauchy-Schwarz inequality, we get (cf. \cite[(3.17)-(3.19)]{CHZ18})
\begin{equation*}
\begin{split}
        & -\frac{2(n-1)\textrm{Re}\left(\sum_{k}(|\de\vp|_{g}^{2})_{k}\vp_{\ov{k}}\right)}{|\de\vp|_{g}^{2}}
          -\frac{F^{i\ov{i}}|e_{i}(|\de\vp|_{g}^{2})|^{2}}{|\de\vp|_{g}^{4}}
          +\frac{1}{B}F^{i\ov{i}}e_{i}\ov{e}_{i}(\vp) \\
\geq {} & -\frac{1}{4|\de\vp|_{g}^{2}}\sum_{i,j}|e_{i}\ov{e}_{j}(\vp)|^{2}+
          \frac{2(n-1)}{B}|\de\vp|_{g}^{2}-\frac{C}{B^{2}}|\de\vp|_{g}^{2}.
\end{split}
\end{equation*}
Substituting this into (\ref{Gradient estimate equation 3}), we see that
\begin{equation}\label{gradient lalpace}
\begin{split}
0 \geq {} & \frac{1}{4|\de\vp|_{g}^{2}}\sum_{i,j}\left(|e_{i}\ov{e}_{j}(\vp)|^{2}+|e_{i}e_{j}(\vp)|^{2}\right)-C_{0}B^{3} \\
          & +\left(\frac{4n-7}{2B}-\frac{C_{0}}{B^{2}}-C_{0}e^{-\vp}\right)|\de\vp|_{g}^{2}.
\end{split}
\end{equation}
Since $A\ll1$, we may assume
\begin{equation*}
C_{0}e^{-\vp} \leq \frac{1}{32C_{0}}.
\end{equation*}
By choosing  $B = 4C_{0}$ in (\ref{gradient lalpace}),   we see that
\begin{equation*}
|\de\vp|_{g}^{2}(x_{0}) \leq 2^{11}C_{0}^{5}.
\end{equation*}
Note that $\frac{1}{M_{0}A}\leq e^{-\vp}\leq M_{0}A$.  Hence,  we obtain
\begin{equation}\label{the first estimate}
\max_{M}|\de\vp|_{g}^{2} \leq e^{\frac{1}{B}(\sup_{M}\vp-\inf_{M}\vp)}|\de\vp|_{g}^{2}(x_{0}) \leq C.
\end{equation}

ii).  $C^2$-estimate.  The proof  is almost as  same as \cite[Propsition 4.1]{CHZ18}. We consider the following auxiliary function,
\begin{equation*}
Q = |\de\dbar\vp|_{g}^{2} + B|\de\vp|_{g}^{2},
\end{equation*}
where $B$ is a uniform constant to be determined.
Let $x_{0}$ be the maximum point of $Q$ and $\{e_{i}\}_{i=1}^{n}$ be the local unitary frame such that $\ti{g}(x_{0})$ is diagonal. By direct calculation, we have
\begin{equation}\label{Second order estimate equation 1}
\begin{split}
F^{i\ov{i}}e_{i}\ov{e}_{i}(|\de\dbar\vp|_{g}^{2})
   = {} & 2 \sum_{k,l}F^{i\ov{i}}e_{i}\ov{e}_{i}(\vp_{k\ov{l}})\vp_{l\ov{k}}+2\sum_{k,l}F^{i\ov{i}}e_{i}(\vp_{k\ov{l}})\ov{e}_{i}(\vp_{l\ov{k}}) \\
\geq {} & -2|\de\dbar\vp|_{g}\sum_{k,l}|F^{i\ov{i}}e_{i}\ov{e}_{i}(\vp_{k\ov{l}})|+\frac{1}{2}\sum_{i,j,p}|e_{p}e_{i}\ov{e}_{j}(\vp)|^{2} \\
        & -C\sum_{i,j}(|e_{i}\ov{e}_{j}(\vp)|^{2}+|e_{i}e_{j}(\vp)|^{2})-C.
\end{split}
\end{equation}
To deal with the fourth order terms  $ \sum_{k,l}F^{i\ov{i}}e_{i}\ov{e}_{i}(\vp_{k\ov{l}})\vp_{l\ov{k}}$ in  (\ref{Second order estimate equation 1}),   we  differentiate  (\ref{New Hessian type of Fu-Yau equation}) twice along $e_{k}$ and $\ov{e}_{l}$, we get
\begin{equation*}
\begin{split}
&F^{i\ov{j},p\ov{q}}e_{k}(e^{-\vp}\ti{g}_{i\ov{j}})\ov{e}_{l}  (e^{-\vp}\ti{g}_{p\ov{q}})
+F^{i\ov{j}}e_{k}\ov{e}_{l}(e^{-\vp}\ti{g}_{i\ov{j}}) \\
&= -2n(n-1)\alpha e_{k}\ov{e}_{l}(e^{-\vp}|\de\vp|_{g}^{2})+\frac{n(n-1)}{2}e_{k}\ov{e}_{l}(e^{-2\vp}f).
\end{split}
\end{equation*}
By the similar argument of \cite[Lemma 4.2]{CHZ18} and the expression of $f$ (\ref{Definition of f}), we obtain
\begin{equation*}
\begin{split}
|F^{i\ov{i}}e_{i}\ov{e}_{i}(\vp_{k\ov{l}})|
\leq {} & 8n|\alpha| e^{-\vp}\sum_{i,j,p}|e_{p}e_{i}\ov{e}_{j}(\vp)|^{2}+C\sum_{i,j,p}|e_{p}e_{i}\ov{e}_{j}(\vp)| \\
        & +C\sum_{i,j}(|e_{i}\ov{e}_{j}(\vp)|^{2}+|e_{i}e_{j}(\vp)|^{2})+C.
\end{split}
\end{equation*}
Substituting this into (\ref{Second order estimate equation 1}) and using \eqref{condition of first and second order estimate}, we obtain
\begin{equation*}
\begin{split}
F^{i\ov{i}}e_{i}\ov{e}_{i}(|\de\dbar\vp|_{g}^{2})
\geq -C_{0}(D+1)\sum_{i,j}(|e_{i}\ov{e}_{j}(\vp)|^{2}+|e_{i}{e}_{j}(\vp)|^{2})-C_{0}.
\end{split}
\end{equation*}
On the other hand, by \eqref{Maximum principle gradient} and $C^{1}$-estimate, we have
\begin{equation*}
F^{i\ov{i}}e_{i}\ov{e}_{i}(|\de\vp|_{g}^{2})
\geq \frac{1}{2}\sum_{i,j}(|e_{i}\ov{e}_{j}(\vp)|^{2}+|e_{i}{e}_{j}(\vp)|^{2})-C_{1}.
\end{equation*}
Hence, by the maximum principle, at $x_{0}$, we get
\begin{equation*}
\begin{split}
0 \geq {} & F^{i\ov{i}}e_{i}\ov{e}_{i}(Q) \\[1mm]
     = {} & F^{i\ov{i}}e_{i}\ov{e}_{i}(|\de\dbar\vp|_{g}^{2})+BF^{i\ov{i}}e_{i}\ov{e}_{i}(|\de\vp|_{g}^{2}) \\
  \geq {} & \left(\frac{B}{2}-C_{0}D-C_{0}\right)\sum_{i,j}(|e_{i}\ov{e}_{j}(\vp)|^{2}+|e_{i}{e}_{j}(\vp)|^{2})-C_{0}-C_{1}B.
\end{split}
\end{equation*}
Choose  $B=8C_{0}D+8C_{0}$.   It follows that
\begin{equation*}
|\de\dbar\vp|_{g}^{2}(x_{0}) \leq C.
\end{equation*}
Therefore,    by  \eqref{the first estimate}, at the expense of increasing $D_{0}$, we obtain
\begin{equation*}
\max_{M}|\de\dbar\vp|_{g}^{2} \leq |\de\dbar\vp|_{g}^{2}(x_{0})+BC \leq CD \leq \frac{D^{2}}{4}.
\end{equation*}
\end{proof}

\section{Proof of Theorem \ref{Existence and Uniqueness Theorem}}\label{Proof of existence theorem}

In this section, we solve (\ref{Fu-Yau equation t})  when  any $t\in [0,1]$.    The following lemma shows the existence of solutions  when $t=0$.

\begin{lemma}\label{Existence lemma}
Let $(M,\omega)$ be an $n$-dimensional compact astheno-K\"{a}hler manifold. Then there exists a function $h\in C^{\infty}(M)$, unique up to addition of a constant, such that
\begin{equation}\label{h-solution}
\ddbar(e^{h}\omega)\wedge\omega^{n-2} = 0.
\end{equation}
\end{lemma}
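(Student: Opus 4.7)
The plan is to make the ansatz $u = e^{h}$ and view \eqref{h-solution} as the homogeneous linear equation $L(u) = 0$, where
\begin{equation*}
L(u)\,\frac{\omega^{n}}{n!} := \ddbar(u\omega)\wedge\omega^{n-2}.
\end{equation*}
Expanding $\ddbar(u\omega) = \ddbar u\wedge\omega - \dbar u\wedge\de\omega + \de u\wedge\dbar\omega + u\ddbar\omega$ shows that $L$ is a second-order elliptic operator whose principal part is a positive multiple of the complex Laplacian of $\omega$, with lower-order coefficients built from $\de\omega$, $\dbar\omega$ and $\ddbar\omega$. The strategy is to compute the formal $L^{2}$-adjoint $L^{*}$, use the astheno-K\"{a}hler hypothesis \eqref{Astheno-Kahler condition} to eliminate its zeroth-order term, and then combine Fredholm theory with a Krein--Rutman positivity argument.

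The first key step is to compute $L^{*}$. Performing two integrations by parts against a test function, with careful bookkeeping of the correction terms coming from the non-closedness of $\omega^{n-2}$, one obtains
\begin{equation*}
L^{*}(v)\,\frac{\omega^{n}}{n!} = \ddbar v\wedge\omega^{n-1} + \de v\wedge\omega\wedge\dbar\omega^{n-2} - \dbar v\wedge\omega\wedge\de\omega^{n-2} + v\,\omega\wedge\ddbar\omega^{n-2}.
\end{equation*}
The crucial point is that the only zeroth-order term in this expression is $v\,\omega\wedge\ddbar\omega^{n-2}$, which vanishes identically by \eqref{Astheno-Kahler condition}. Therefore $L^{*}$ is a second-order elliptic operator on $M$ with no zeroth-order coefficient.

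Since $L^{*}$ has no zeroth-order term, the strong maximum principle forces any solution of $L^{*}(v) = 0$ on the compact connected manifold $M$ to be constant, so $\ker L^{*} = \mathbb{R}$. The Fredholm index of the second-order elliptic operator $L$ on a closed manifold vanishes, hence $\dim \ker L = \dim \ker L^{*} = 1$. To extract a strictly positive element of $\ker L$, I invoke Krein--Rutman principal-eigenvalue theory: the constant function $v\equiv 1$ is a strictly positive eigenfunction of $L^{*}$ with eigenvalue zero, so $\lambda_{1}(L^{*}) = 0$, and by the equality $\lambda_{1}(L) = \lambda_{1}(L^{*})$ for second-order elliptic operators there exists a strictly positive $u \in \ker L$. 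Setting $h := \log u$ yields a smooth function satisfying \eqref{h-solution}, and uniqueness up to an additive constant follows from $\dim \ker L = 1$: any other smooth solution $h'$ gives $e^{h'}\in\ker L$, hence $e^{h'} = c\,e^{h}$ for some constant $c>0$, i.e.\ $h' = h + \log c$.

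The main obstacle is the integration-by-parts computation of $L^{*}$: because $\omega$ is not K\"{a}hler, each application of Stokes' theorem produces correction terms involving $\de\omega^{n-2}$, $\dbar\omega^{n-2}$ and $\ddbar\omega^{n-2}$, and one must verify that after every cancellation the only surviving zeroth-order contribution to $L^{*}$ is exactly $\omega\wedge\ddbar\omega^{n-2}$, so that the astheno-K\"{a}hler condition is precisely what is needed to apply the strong maximum principle.
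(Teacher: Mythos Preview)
Your proof is correct and follows essentially the same route as the paper: define the linear operator $L$ via $\ddbar(u\omega)\wedge\omega^{n-2}$, compute the adjoint and use the astheno-K\"ahler condition to kill its zeroth-order term, deduce $\ker L^{*}=\mathbb{R}$ by the strong maximum principle, and conclude $\dim\ker L=1$ by Fredholm index zero. The only difference is in the positivity step: the paper simply asserts that a generator $v_{0}$ of $\ker L$ does not change sign and then applies the strong maximum principle (which is legitimate since the minimum would be exactly $0$), whereas you invoke Krein--Rutman/principal-eigenvalue theory via $\lambda_{1}(L)=\lambda_{1}(L^{*})=0$ to produce a strictly positive kernel element directly; your justification is more explicit, but the content is the same.
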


\begin{proof}
First we prove the existence. We define an elliptic operator $\ti{L}$ by
\begin{equation*}
(\ti{L}u)\omega^{n} = \ddbar(u\omega)\wedge\omega^{n-2}.
\end{equation*}
Let $\ti{L}^{*}$ be a $L^{2}$-adjoint   operator of $\ti{L}$.  Then by Stokes' formula and the condition $\ddbar\omega^{n-2}=0$, we see that
\begin{equation*}
(\ti{L}^{*}v)\omega^{n}
= \ddbar v\wedge\omega^{n-1}+\sqrt{-1}\de v\wedge\omega\wedge\dbar\omega^{n-2}-\dbar v\wedge\omega\wedge\sqrt{-1}\de\omega^{n-2}.
\end{equation*}
It follows that  ${\rm Ker}\ti{L}^{*}=\{\text{constants}\}$  and   $\textrm{Ind}(\ti{L})=\textrm{Ind}(\ti{L}^{*})=0$. Thus
\begin{equation*}
\textrm{dim}(\textrm{Ker}\ti{L}) = 1.
\end{equation*}
Denote     the generator of $\textrm{Ker}\ti{L}$  by  $v_{0}$. Then $v_{0}$ does not change the  sign,  and  we may  assume that $v_{0}\geq0$.  By the strong maximum principle,   we know that  $v_{0}>0$.   Hence,  $h=\log v_{0}$ satisfies (\ref{h-solution}).

For the uniqueness, by $\textrm{dim}(\textrm{Ker}\ti{L})=1$, we see that the only solution of (\ref{h-solution}) is $h+c$, where $c$ is a constant.
\end{proof}

Choose  the function $h$ in   (\ref{Fu-Yau equation t})  as a solution  of  (\ref{h-solution}).  We consider solution $\varphi=\varphi_t$ of  (\ref{Fu-Yau equation t})  which  satisfies the elliptic condition
\begin{equation}\label{Elliptic condition t}
e^{\vp}\omega+t\alpha e^{-\vp}\rho+2n\alpha\ddbar\vp \in \Gamma_{2}(M)
\end{equation}
and the normalization condition
\begin{equation}\label{Normalization condition t}
\|e^{-\vp}\|_{L^{1}}=A.
\end{equation}
By taking  $\vp_{0}=h+\log\|e^{-h}\|_{L^{1}}-\log A$,  it is easy to see that  $\|e^{-\vp_{0}}\|_{L^{1}}=A$ and  $\vp_{0}$ is a solution of   (\ref{Fu-Yau equation t})   when $t=0$.  Moreover, since  $A\ll 1$,
\begin{equation*}
e^{\vp_{0}}\omega+2n\alpha\ddbar\vp_{0}=\frac{\|e^{-h}\|_{L^{1}}}{A}e^{h}\omega+2n\alpha\ddbar h > 0.
\end{equation*}
Thus $\vp_{0}$ satisfies (\ref{Elliptic condition t}) and (\ref{Normalization condition t}).

For a fixed $\beta\in (0,1)$, we define
\begin{equation*}
\begin{split}
B & = \{ \vp\in C^{2,\beta}(M) ~|~ \|e^{-\vp}\|_{L^{1}} = A \},\\[2mm]
B_{1} & = \{ (\vp,t)\in B\times[0,1] ~|~ \text{$\vp$ satisfies (\ref{Elliptic condition t})} \},\\
B_{2} & = \{ u\in C^{\beta}(M) ~|~ \int_{M}u\omega^{n}=0 \}.
\end{split}
\end{equation*}
Then $B_{1}$ is an open subset of $B\times[0,1]$. Since $\int_{M}\mu\omega^{n}=0$, we introduce a map $\Phi:B_{1}\rightarrow B_{2}$,
\begin{equation*}
\begin{split}
\Phi(\vp,t)\omega^{n} = {} &\ddbar(e^{\vp}\omega-t\alpha e^{-\vp}\rho)\wedge\omega^{n-2} \\[1mm]
& +n\alpha\ddbar\vp\wedge\ddbar\vp\wedge\omega^{n-2} \\
& +2n\alpha(t-1)\ddbar h\wedge\ddbar h\wedge\omega^{n-2}+t\mu\frac{\omega^{n}}{n!}.
\end{split}
\end{equation*}
Set
\begin{equation*}
I=\{ t\in [0,1] ~|~ \text{there exists $(\vp,t)\in B_{1}$ such that $\Phi(\vp,t)=0$} \}.
\end{equation*}
Then the existence of  solutions of  (\ref{Fu-Yau equation}) is reduced to proving that $I$ is both open and closed.

\begin{proof}[Proof of  Theorem \ref{Existence and Uniqueness Theorem}]
\textbf{Openness.} Suppose that $\hat{t}\in I$ and  there exists $(\hat{\vp},\hat{t})\in B_{1}$ such that $\Phi(\hat{\vp},\hat{t})=0$. Let
\begin{equation*}
L: \{ u\in C^{2,\beta}(M) ~|~ \int_{M}ue^{-\hat{\vp}}\omega^{n}=0 \}
\rightarrow \{ v\in C^{\beta}(M) ~|~ \int_{M}v\omega^{n}=0 \}
\end{equation*}
be a  linearized operator of $\Phi$ at $\hat{\vp}$. Then
\begin{equation*}
\begin{split}
(Lu)\omega^{n} = {} & \ddbar(ue^{\hat{\vp}}\omega+\hat{t}\alpha ue^{-\hat{\vp}}\rho)\wedge\omega^{n-2} \\
                    & +2n\alpha\ddbar\hat{\vp}\wedge\ddbar u\wedge\omega^{n-2}.
\end{split}
\end{equation*}
By the implicit function theorem, it suffices to prove that $L$ is injective and surjective.

Let $L^{*}$ be a  $L^{2}$-adjoint  operaor  of $L$.  By the fact  $\ddbar\omega^{n-2}=0$ and Stokes' formula,  it follows that
\begin{equation}\label{Definition of L^*}
\begin{split}
L^{*}(v)\omega^{n}
= {} & \ddbar v\wedge\left(e^{\hat{\vp}}\omega+\hat{t}\alpha e^{-\hat{\vp}}\rho+2n\alpha\ddbar\hat{\vp}\right)\wedge\omega^{n-2} \\
     & +\sqrt{-1}\de v\wedge\left(e^{\hat{\vp}}\omega+\hat{t}\alpha e^{-\hat{\vp}}\rho+2n\alpha\ddbar\hat{\vp}\right)\wedge\dbar\omega^{n-2} \\
     & -\dbar v\wedge\left(e^{\hat{\vp}}\omega+\hat{t}\alpha e^{-\hat{\vp}}\rho+2n\alpha\ddbar\hat{\vp}\right)\wedge\sqrt{-1}\de\omega^{n-2}.
\end{split}
\end{equation}
Thus  $L^{*}$ has no zero order terms.   By the strong maximum principle, we see that
\begin{equation}\label{Kernel of L*}
\textrm{Ker}L^{*} \subset \{ \text{constants} \}.
\end{equation}
As a consequence,
$\textrm{Ker}L \subset  \{ cu_{0} ~|~ c\in\mathbf{R} \}$ for some smooth function $u_0$  by $\textrm{Ind}(L)=0$.
On the other hand,  again by the strong maximum principle, we may assume that $u_{0}>0$. Thus
\begin{equation*}
u_{0} \notin \{ u\in C^{2,\beta}(M) ~|~ \int_{M}ue^{-\hat{\vp}}\omega^{n}=0 \},
\end{equation*}
which  implies  $\textrm{Ker}L=0$, and so $L$ is injective.

Next, for any
\begin{equation*}
w \in \{ v\in C^{\beta}(M) ~|~ \int_{M}v\omega^{n}=0 \},
\end{equation*}
by the Fredholm alternative and regularity theory of elliptic equations, there exists a function $\ti{u}\in C^{2,\beta}(M)$ such that $L\ti{u}=w$. It then follows that
\begin{equation*}
L(\ti{u}+c_{0}u_{0}) = w \text{~and~} \ti{u}+c_{0}u_{0} \in \{ u\in C^{2,\beta}(M) ~|~ \int_{M}ue^{-\hat{\vp}}\omega^{n}=0 \},
\end{equation*}
where
\begin{equation*}
c_{0} = -\frac{\int_{M}\ti{u}e^{-\hat{\vp}}\omega^{n}}{\int_{M}u_{0}e^{-\hat{\vp}}\omega^{n}}.
\end{equation*}
This implies that $L$ is surjective.

\textbf{Closeness.} First we prove the $C^{0}$-estimate along (\ref{Fu-Yau equation t}). Recalling $\vp_{0}=h+\log\|e^{-h}\|_{L^{1}}-\log A$ and $A\ll1$, we have $\sup_{M}e^{-\vp_{0}}\leq M_{0}A$. We claim
\begin{claim}
\begin{equation*}
\sup_{M}e^{-\vp_{t}}\leq 2M_{0}A, ~\forall~ t\in[0,1].
\end{equation*}
\end{claim}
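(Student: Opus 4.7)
The plan is a bootstrap continuity argument based on continuous dependence of $\vp_t$ on $t$. First I would verify continuity of the map $t \mapsto \sup_M e^{-\vp_t}$ on the solvable set $I$ for (\ref{Fu-Yau equation t}); this follows from the local $C^{2,\beta}$ continuous dependence of $\vp_t$ on $t$ provided by the implicit function theorem used in the openness argument above. At $t=0$ the explicit formula for $\vp_0$ gives $\sup_M e^{-\vp_0} \leq M_0 A$.

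Next, suppose for contradiction that some $t^* \in I$ has $\sup_M e^{-\vp_{t^*}} > M_0 A$. By continuity I may take $t^*$ minimal with this property, so that $\sup_M e^{-\vp_{t^*}} \in (M_0 A,\, 2M_0 A]$. Choosing $A_0$ small enough that $2M_0 A_0 \leq \delta_0$, this forces $e^{-\vp_{t^*}} \leq \delta_0$ pointwise. The key step is then to apply a version of Proposition \ref{Zero order estimate} adapted to the deformation equation (\ref{Fu-Yau equation t}) in order to strictly improve the bound to $\sup_M e^{-\vp_{t^*}} \leq M_0 A$, producing the desired contradiction.

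To extend Proposition \ref{Zero order estimate} to (\ref{Fu-Yau equation t}), I would revisit Lemma \ref{Zero order estimate lemma}. The two extra summands $n\alpha(t-1)\ddbar h \wedge \ddbar h \wedge \omega^{n-2}$ and $t\mu\,\omega^n/n!$ appearing on the left-hand side of (\ref{Fu-Yau equation t}) contribute the additional integrals
\[
2n\alpha(1-t)\int_M f(\vp)\,\ddbar h \wedge \ddbar h \wedge \omega^{n-2} \quad\text{and}\quad -2t\int_M f(\vp)\mu\,\frac{\omega^n}{n!}
\]
to the right-hand side of (\ref{l2-gradient}). Since $h$ was fixed once and for all by Lemma \ref{Existence lemma} and the data $\mu$ is given, both integrands are dominated by $C|f(\vp)|\,\omega^n$ with $C$ independent of $t$. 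Taking $f(\vp) = -e^{-(k+1)\vp}$ as in Proposition \ref{Zero order estimate}, these corrections are bounded by $C\int_M e^{-(k+1)\vp}\omega^n \leq C\delta_0 \int_M e^{-k\vp}\omega^n$ and can be absorbed into the left-hand side of (\ref{e-phi}) after shrinking $\delta_0$ if necessary. The Moser iteration then proceeds exactly as before and yields $\sup_M e^{-\vp_{t^*}} \leq M_0 A$, contradicting the choice of $t^*$.

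The main obstacle is purely one of uniformity: one must check that none of the iteration constants depend on $t$. This is automatic because $h$ is constructed once before the continuity method begins and all background data ($\alpha, \rho, \mu, (M,\omega)$) are fixed, so every constant appearing in the adapted version of Lemma \ref{Zero order estimate lemma} is uniform in $t \in [0,1]$. An analogous argument simultaneously controls $\sup_M e^{\vp_t}$ from above via the Claim \ref{Claim} iteration, giving the complementary bound $e^{-\vp_t} \geq A/(2M_0)$ that will be needed to invoke Proposition \ref{First and second order estimate} in the remainder of the closedness proof.
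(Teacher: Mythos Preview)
Your approach is essentially the same bootstrap/continuity argument the paper uses, and it is sound in spirit. Two small remarks are in order.

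First, the phrasing of the contradiction step is slightly off. If you take $t^*$ to be the infimum of $\{t:\sup_M e^{-\vp_t}>M_0A\}$, then by continuity $\sup_M e^{-\vp_{t^*}}=M_0A$, not a value in $(M_0A,2M_0A]$, so no contradiction arises. The clean version (which the paper adopts) is: suppose the Claim fails, so some $t$ has $\sup_M e^{-\vp_t}>2M_0A$; by continuity and the initial bound at $t=0$ there is a first time $\tilde t$ with $\sup_M e^{-\vp_{\tilde t}}=2M_0A$; since $2M_0A\le\delta_0$ one may invoke the zero-order estimate at $\tilde t$ to force $\sup_M e^{-\vp_{\tilde t}}\le M_0A$, a contradiction.

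Second, rather than re-tracing Lemma~\ref{Zero order estimate lemma} with the two extra terms in (\ref{Fu-Yau equation t}) tracked by hand, the paper simply observes that (\ref{Fu-Yau equation t}) is literally an instance of (\ref{Fu-Yau equation}) after the substitutions
\[
\rho\ \longmapsto\ t\rho,\qquad \mu\ \longmapsto\ \frac{n\alpha(t-1)\,n!\,\ddbar h\wedge\ddbar h\wedge\omega^{n-2}}{\omega^n}+t\mu,
\]
so Proposition~\ref{Zero order estimate} applies directly, with constants uniform in $t\in[0,1]$ because the substituted data stay in a fixed bounded set. Your explicit term-tracking reaches the same conclusion but is less efficient.
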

If the claim is false, there exists $\ti{t}\in(0,1)$ such that
\begin{equation}\label{Claim 2 equation 1}
\sup_{M}e^{-\vp_{\ti{t}}} = 2M_{0}A.
\end{equation}
Since $A\ll1$, we assume that $2M_{0}A\leq\delta_{0}$, where $\delta_{0}$ is the constant in Proposition \ref{Zero order estimate}. Then,  applying Proposition \ref{Zero order estimate} while $\rho$ and $\mu$ are replaced by
\begin{equation*}
t\rho \text{~and~} \frac{n\alpha(t-1)n!\ddbar h\wedge\ddbar h\wedge\omega^{n-2}}{\omega^{n}}+t\mu,
\end{equation*}
we obtain $\sup_{M}e^{-\vp_{\ti{t}}}\leq M_{0}A$, which contradicts with (\ref{Claim 2 equation 1}). Thus the claim is true.

By Claim 2, we see that Proposition \ref{Zero order estimate} and Proposition \ref{First and second order estimate} hold for $\vp_{t}$. As a consequence, we get the $C^{2}$-estimate for $\varphi_t$  along (\ref{Fu-Yau equation t}). Then combining the $C^{2,\alpha}$-estimate (cf. \cite[Theorem 1.1]{TWWY15}) and the bootstrapping argument, we complete the proof of closeness (for more details, we refer the reader to \cite[Section 5.2]{CHZ18}).

\textbf{Uniqueness.}  The uniqueness of  solutions of   (\ref{Fu-Yau equation})  can be proved by a similar argument of \cite[Section 5.3]{CHZ18} (also see  the proof of Theorem \ref{Uniqueness theorem alpha negative}  in  Section 7 below).     It suffices to prove that $\varphi_0$  of  (\ref{Fu-Yau equation t})   is unique  when $t=0$ by the estimates in  Proposition \ref{Zero order estimate} and   Proposition \ref{First and second order estimate}.  But the latter is guarantted by Remark \ref{unique-2-h}
in Section 7.
\end{proof}

\section{Zero order estimate (II)}
In this section, we improve  Proposition \ref{Zero order estimate} in the case of $\alpha<0$.   The key point is to drop the condition $e^{-\vp}\leq\delta_{0}$. We begin with the following lemma.

\begin{lemma}\label{Effective zero order estimate lemma}
Let $\alpha<0$ and $\vp$ be a smooth solution of (\ref{Fu-Yau equation}) satisfying (\ref{Elliptic condition}) and (\ref{L^n normalization condition}). There exists constant $A_{0}$ depending only on $\alpha$, $\rho$, $\mu$ and $(M,\omega)$ such that if any $A\leq A_{0}$, then
\begin{equation}\label{c0-2}
e^{-\vp} \leq C_{0}.
\end{equation}
\end{lemma}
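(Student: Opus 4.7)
The plan is to refine the Moser iteration of Proposition \ref{Zero order estimate}, using the sign $\alpha<0$ together with the stronger $L^n$-normalization to dispense with the pointwise smallness assumption $e^{-\vp}\le\delta_0$. The $L^n$-bound $\|e^{-\vp}\|_{L^n}=A$ with $A$ small supplies enough initial integrability to bootstrap directly to $L^\infty$, in contrast to the two-step argument in Proposition \ref{Zero order estimate} which first needs the $L^\infty$-norm of $e^{-\vp}$ to be small in order to achieve $e^\vp\omega+\alpha e^{-\vp}\rho\ge\tfrac12 e^\vp\omega$.

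I would apply Lemma \ref{Zero order estimate lemma} with $f(\vp)=-\tfrac{1}{k-1}e^{-(k-1)\vp}$ for integers $k\ge n$, so that $f'(\vp)=e^{-(k-1)\vp}\ge 0$. Split the left-hand side as a ``good'' term
\begin{equation*}
\int_M e^{-(k-2)\vp}\,\mn\de\vp\wedge\db\vp\wedge\omega^{n-1} \;\ge\; c_n\int_M e^{-(k-2)\vp}|\de\vp|_g^2\,\omega^n
\end{equation*}
plus a ``bad'' term $\alpha\int_M e^{-k\vp}\,\mn\de\vp\wedge\db\vp\wedge\rho\wedge\omega^{n-2}$. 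Without the restriction $e^{-\vp}\le\delta_0$, the naive pointwise bound on this second term would introduce an uncontrollable factor $\sup_M e^{-2\vp}$. The key manoeuvre is an integration by parts: shift a $\de$ from $\de\vp$ onto the factor $e^{-k\vp}$, picking up a compensating $1/k$, and convert the pair $\de\vp\wedge\db\vp$ into an expression involving a single $\ddbar\vp$. The sign $\alpha<0$ ensures the resulting contribution has the correct orientation. One then rewrites $\ddbar\vp=\tfrac{1}{2n\alpha}(\ti{\omega}-e^\vp\omega-\alpha e^{-\vp}\rho)$ from the definition of $\ti{\omega}$ and estimates $\ti{\omega}\wedge\rho\wedge\omega^{n-2}\le C\sigma_1(\ti{\omega})\omega^n$ via the $\Gamma_2$-ellipticity, with the $\sigma_1(\ti{\omega})$-contribution then reabsorbed via the equation.

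Combining this with standard Cauchy--Schwarz bounds on the right-hand side of Lemma \ref{Zero order estimate lemma} (where the astheno-K\"ahler condition $\ddbar\omega^{n-2}=0$ is again essential to eliminate boundary contributions involving $\ddbar\omega^{n-2}$), I expect a uniform gradient inequality of the form
\begin{equation*}
\int_M e^{-(k-2)\vp}|\de\vp|_g^2\,\omega^n \;\le\; Ck\int_M\bigl(e^{-(k-1)\vp}+e^{-k\vp}\bigr)\omega^n,
\end{equation*}
valid for every $k\ge n$ with $C$ independent of $k$ and $\vp$. Feeding this into the Sobolev inequality applied to $u=e^{-(k-1)\vp/2}$ produces a Moser-type recursion with exponent gain $\chi=n/(n-1)>1$. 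Iterating from the base norm $\|e^{-\vp}\|_{L^n}=A\le A_0\ll 1$ gives $\|e^{-\vp}\|_{L^\infty}\le C_0$.

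The main obstacle will be the integration-by-parts step in the control of the bad $\rho$-term. Without a pointwise ceiling on $e^{-\vp}$, this term carries a potentially large weight $e^{-k\vp}$, and the naive absorption fails; it is precisely the combination of $\alpha<0$ (which fixes the sign of the integrated remainder) and the $\Gamma_2$-ellipticity (which bounds the ensuing $\ddbar\vp$-contribution by $\sigma_1(\ti{\omega})$) that allows the argument to close. In the opposite case $\alpha>0$ this mechanism breaks down, consistent with Proposition \ref{Zero order estimate} genuinely requiring the smallness hypothesis $e^{-\vp}\le\delta_0$.
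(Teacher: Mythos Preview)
The paper does \emph{not} route through Lemma \ref{Zero order estimate lemma} here; it uses a much more elementary observation. Since $\ti\omega\in\Gamma_2(M)$ implies $\sigma_1(\ti\omega)>0$ pointwise, one has $\int_M e^{-k\vp}\ti\omega\wedge\omega^{n-1}\ge 0$. Expanding $\ti\omega=e^{\vp}\omega+\alpha e^{-\vp}\rho+2n\alpha\ddbar\vp$ and integrating the last term by parts produces, precisely because $\alpha<0$, the favorable term $-2n|\alpha|k\int_M e^{-k\vp}|\de\vp|_g^2\,\omega^{n-1}$. Rearranging gives directly
\[
k\int_M e^{-k\vp}|\de\vp|_g^{2}\,\omega^n\ \le\ C\int_M\bigl(e^{-(k-1)\vp}+e^{-(k+1)\vp}+e^{-k\vp}\bigr)\omega^n ,
\]
and hence, by Sobolev, $(\int e^{-k\beta\vp})^{1/\beta}\le C_1k\int e^{-(k-1)\vp}+C_1k\int e^{-(k+1)\vp}+C\int e^{-k\vp}$. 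The crucial point you do not address is that the right-hand side carries the \emph{higher} power $e^{-(k+1)\vp}$, so this is not a standard Moser recursion. The paper handles it in two steps: first take $k=n$ and use H\"older, $\int e^{-(n+1)\vp}\le\|e^{-\vp}\|_{L^n}\,\|e^{-\vp}\|_{L^{n\beta}}^{n}=A\,\|e^{-\vp}\|_{L^{n\beta}}^{n}$, so that the smallness $A\le A_0$ permits absorption and yields $\|e^{-\vp}\|_{L^{n+1}}\le C$; only then is a nonlinear iteration on $H=e^{-\vp}+1$ run to $L^\infty$. This H\"older absorption is the sole place the hypothesis $A\le A_0$ enters, and it is absent from your sketch.

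Your detour via Lemma \ref{Zero order estimate lemma} has two specific problems. First, after your integration by parts on the ``bad $\rho$-term'' and substitution $2n\alpha\ddbar\vp=\ti\omega-e^{\vp}\omega-\alpha e^{-\vp}\rho$, you need an upper bound on $\int e^{-k\vp}\sigma_1(\ti\omega)\,\omega^n$; but expanding $\sigma_1(\ti\omega)$ and integrating the $\Delta\vp$ term by parts is exactly the paper's direct computation above, so you have merely circled back to it through extra machinery. Second, the inequality $\int e^{-(k-2)\vp}|\de\vp|^2\le Ck\int(e^{-(k-1)\vp}+e^{-k\vp})$ you claim cannot be obtained from Lemma \ref{Zero order estimate lemma}: the right-hand side of that lemma contains the terms with $\alpha e^{-\vp}\db\rho$ and $\alpha e^{-\vp}\rho\wedge\de\omega^{n-2}$, which contribute $C\int e^{-k\vp}|\de\vp|$; Cauchy--Schwarz against the good term at weight $e^{-(k-2)\vp}$ forces a term $\int e^{-(k+2)\vp}$ onto the right. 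So even in your scheme the recursion carries higher powers on the right, and the H\"older absorption step using $A$ small is unavoidable.
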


\begin{proof}
The elliptic condition $\ti{\omega}\in\Gamma_{2}(M)$ implies that
\begin{equation*}
\int_{M}e^{-k\vp}\ti{\omega}\wedge\omega^{n-1} \geq 0.
\end{equation*}
Namely,
$$0 \leq  \int_{M}e^{-k\vp}(e^{\vp}\omega+\alpha e^{-\vp}\rho+2n\alpha\ddbar\vp)\wedge\omega^{n-1} .$$
By the Stokes' formula, for $k>1$, it follows that
\begin{equation}\label{Zero order estimate equation 2}
\begin{split}
0 \leq   {} & \int_{M}e^{-(k-1)\vp}\omega^{n}-|\alpha|\int_{M}e^{-(k+1)\vp}\rho\wedge\omega^{n-1} \\
          & +2n\alpha\int_{M}e^{-k\vp}\dbar\vp\wedge\sqrt{-1}\de\omega^{n-1} \\
          & -2n|\alpha|k\int_{M}e^{-k\vp}\sqrt{-1}\de\vp\wedge\db\vp\wedge\omega^{n-1} \\
  \leq {} & C\int_{M}\left(e^{-(k-1)\vp}+e^{-(k+1)\vp}\right)\omega^{n} \\
          & -\frac{2n\alpha}{k}\int_{M}e^{-k\vp}\ddbar\omega^{n-1} \\
          & -2n|\alpha|k\int_{M}e^{-k\vp}\sqrt{-1}\de\vp\wedge\db\vp\wedge\omega^{n-1}.
\end{split}
\end{equation}
Thus by the Sobolev inequality, we obtain
\begin{equation}\label{Zero order estimate equation 1}
\begin{split}
~    & \left(\int_{M}e^{-k\beta\vp}\omega^{n}\right)^{\frac{1}{\beta}}\\
\leq & C_{1}k\int_{M}e^{-(k-1)\vp}\omega^{n}+C_{1}k\int_{M}e^{-(k+1)\vp}\omega^{n}+C\int_{M}e^{-k\vp}\omega^{n},
\end{split}
\end{equation}
where $\beta=\frac{n}{n-1}$.

Next, we  prove that  $\|e^{-\vp}\|_{L^{n+1}}\leq C$  by  (\ref{Zero order estimate equation 1}).  In fact,   by taking $k=n$ in
(\ref{Zero order estimate equation 1})  and the H\"{o}lder inequality,    we see that
\begin{equation*}
\begin{split}
\|e^{-\vp}\|_{L^{n\beta}}^{n}
\leq {} & C_{1}n\|e^{-\vp}\|_{L^{n}}^{n-1}+C_{1}n\|e^{-\vp}\|_{L^{n}}\|e^{-\vp}\|_{L^{n\beta}}^{n}+C\|e^{-\vp}\|_{L^{n}} \\
\leq {} & CA^{n-1}+C_{1}nA\|e^{-\vp}\|_{L^{n\beta}}^{n}+CA.
\end{split}
\end{equation*}
Note that  $C_{1}nA\leq\frac{1}{2}$.   Then
\begin{equation*}
\|e^{-\vp}\|_{L^{n\beta}} \leq C.
\end{equation*}
Thus we get
\begin{equation}\label{ln-integral}
\|e^{-\vp}\|_{L^{n+1}} \leq \|e^{-\vp}\|_{L^{n\beta}} \leq C.
\end{equation}

Finally,
we use the iteration to obtain (\ref{c0-2}).
Let $H=e^{-\vp}+1$.  It suffices to prove that $\|H\|_{L^{\infty}}\leq C$.   By  (\ref{Zero order estimate equation 1}),  it is easy to see  that
\begin{equation*}
\|H\|_{L^{k\beta}} \leq (Ck)^{\frac{1}{k}}\|H\|_{L^{k+1}}^{\frac{k+1}{k}}.
\end{equation*}
For $j\geq0$, we define
\begin{equation*}
p_{j} = \beta^{j}+\frac{\beta}{\beta-1}, ~a_{j} = (Cp_{j}-p_{j})^{\frac{1}{p_{j}-1}} \text{~and~} b_{j} = \frac{p_{j}}{p_{j}-1}.
\end{equation*}
It then follows that
\begin{equation*}
\|H\|_{L^{p_{j+1}}} \leq a_{j}\|H\|_{L^{p_{j}}}^{b_{j}},
\end{equation*}
which implies
\begin{equation}\label{iteration-h}
\|H\|_{L^{p_{j+1}}} \leq a_{j}a_{j-1}^{b_{j}}\cdots a_{0}^{b_{j}\cdots b_{1}}\|H\|_{L^{p_{0}}}^{b_{j}\cdots b_{0}}.
\end{equation}
Note that  $\prod_{i=1}^{\infty}a_{i}<\infty$ and $\prod_{i=1}^{\infty}b_{i}<\infty$.  Thus  as $j\rightarrow\infty$,  we obtain
from (\ref{iteration-h}),
 \begin{equation*}
\|H\|_{L^{\infty}} \leq C\|H\|_{L^{n+1}}^{C}\le C'.
\end{equation*}
Here we used (\ref{ln-integral}).
\end{proof}

Now,  we  apply  Lemma \ref{Effective zero order estimate lemma}  and Lemma \ref{Zero order estimate lemma} to   improve  Proposition \ref{Zero order estimate} as follows.

\begin{proposition}\label{Stronger zero order estimate}
Let $\alpha<0$ and $\vp$ be a smooth solution of (\ref{Fu-Yau equation}) satisfying (\ref{Elliptic condition}) and (\ref{L^n normalization condition}). There exist constants $A_{0}$ and $M_{0}$ depending only on $\alpha$, $\rho$, $\mu$ and $(M,\omega)$ such that if $A\leq A_{0}$, then
\begin{equation*}
\frac{A}{M_{0}} \leq e^{-\vp} \leq M_{0}A.
\end{equation*}
\end{proposition}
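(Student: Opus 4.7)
The strategy is to use Lemma \ref{Effective zero order estimate lemma} together with the $L^n$-normalization to verify the smallness hypothesis $e^{-\vp}\le \delta_0$ of Proposition \ref{Zero order estimate}, and then invoke Proposition \ref{Zero order estimate} to get the two-sided linear bound.

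First, Lemma \ref{Effective zero order estimate lemma} gives the unconditional bound $\|e^{-\vp}\|_{L^{\infty}}\le C_0$; combining this with the $L^n$-normalization yields
\[
\|e^{-\vp}\|_{L^{n+1}}^{n+1} = \int_M e^{-\vp}\cdot e^{-n\vp}\,\omega^n \le C_0 A^n,
\]
so $\|e^{-\vp}\|_{L^{n+1}}\le C A^{n/(n+1)}$, which is small for small $A$. I then rerun the Moser iteration from the proof of Lemma \ref{Effective zero order estimate lemma} starting from this small $L^{n+1}$-value, applied directly to $e^{-\vp}$ rather than to the shifted quantity $H = e^{-\vp}+1$. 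Using H\"older to reduce the extra term $\|e^{-\vp}\|_{L^{k-1}}^{k-1}$ appearing in the iteration inequality to $C\|e^{-\vp}\|_{L^{k+1}}^{k-1}$, the resulting per-step exponent $(k-1)/k$ is subcritical, but its infinite product converges to a strictly positive limit $\gamma'\in(0,1)$, yielding
\[
\|e^{-\vp}\|_{L^{\infty}} \le C\|e^{-\vp}\|_{L^{n+1}}^{\gamma'} \le C A^{n\gamma'/(n+1)}.
\]
Choosing $A_0$ small enough gives $\|e^{-\vp}\|_{L^{\infty}}\le \delta_0$ whenever $A\le A_0$.

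With the smallness hypothesis $e^{-\vp}\le \delta_0$ verified, and noting $\|e^{-\vp}\|_{L^1}\le \|e^{-\vp}\|_{L^n}=A$ by H\"older (taking $\vol(M,\omega)=1$), Proposition \ref{Zero order estimate} applies and gives the upper bound $e^{-\vp}\le M_0\|e^{-\vp}\|_{L^1}\le M_0 A$. For the lower bound, the reverse chain
\[
A^n=\|e^{-\vp}\|_{L^n}^n\le \|e^{-\vp}\|_{L^{\infty}}^{n-1}\|e^{-\vp}\|_{L^1}\le (M_0 A)^{n-1}\|e^{-\vp}\|_{L^1}
\]
yields $\|e^{-\vp}\|_{L^1}\ge A/M_0^{n-1}$, and Proposition \ref{Zero order estimate} then gives $e^{-\vp}\ge \|e^{-\vp}\|_{L^1}/M_0\ge A/M_0^n$. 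Enlarging $M_0$ gives the desired two-sided estimate.

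The main obstacle is the second step. The iteration in Lemma \ref{Effective zero order estimate lemma} was originally set up for the shifted quantity $H=e^{-\vp}+1 \ge 1$, where the monotonicity $\|H\|_{L^{k-1}}^{k-1}\le \|H\|_{L^{k+1}}^{k+1}$ absorbs the lower-order integrals arising from the Sobolev-enhanced inequality. For $e^{-\vp}$ itself this monotonicity fails, and one must instead use H\"older, which yields subcritical per-step exponents. The key observation is that their infinite product stays bounded away from $0$, so the initial smallness of $\|e^{-\vp}\|_{L^{n+1}}$ does propagate to an $L^{\infty}$-bound vanishing as $A\to 0$.
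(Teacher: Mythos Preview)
Your approach is correct and reaches the same endpoint as the paper (verify $e^{-\vp}\le\delta_0$, then invoke Proposition~\ref{Zero order estimate} and compare $\|e^{-\vp}\|_{L^1}$ with $\|e^{-\vp}\|_{L^n}=A$), but the route to the smallness $e^{-\vp}\le\delta_0$ is genuinely different. The paper returns to Lemma~\ref{Zero order estimate lemma} with $f(\vp)=-e^{-k\vp}$: combined with the crude bound $e^{-\vp}\le C_0$ from Lemma~\ref{Effective zero order estimate lemma}, this yields the clean inequality
\[
\int_M e^{-(k-1)\vp}|\de\vp|_g^2\,\omega^n \le C\int_M e^{-(k-1)\vp}\,\omega^n,
\]
and standard Moser iteration then gives the \emph{linear} estimate $\|e^{-\vp}\|_{L^\infty}\le C\|e^{-\vp}\|_{L^1}\le CA$ directly. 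Your route instead recycles the iteration inequality from the proof of Lemma~\ref{Effective zero order estimate lemma} (which uses only the $\Gamma_1$-part of the ellipticity), obtaining the weaker power bound $\|e^{-\vp}\|_{L^\infty}\le C\|e^{-\vp}\|_{L^{n+1}}^{\gamma'}$ with $\gamma'\in(0,1)$; this still suffices to force $e^{-\vp}\le\delta_0$ for small $A$. The paper's path is shorter and gives a sharper intermediate bound; yours has the minor virtue of not re-invoking Lemma~\ref{Zero order estimate lemma}.

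One point you should tighten: the iteration inequality you are reusing carries not only the subcritical term $\int e^{-(k-1)\vp}$ but also $\int e^{-(k+1)\vp}$ and $\int e^{-k\vp}$. Your write-up addresses only the first via H\"older. The $\int e^{-(k+1)\vp}$ term is supercritical and must be absorbed separately; the fix is immediate using the bound $e^{-\vp}\le C_0$ already obtained from Lemma~\ref{Effective zero order estimate lemma}, since $\int e^{-(k+1)\vp}\le C_0^2\int e^{-(k-1)\vp}$, after which all three terms are controlled by $C\|e^{-\vp}\|_{L^{k+1}}^{k-1}$ and your subcritical iteration goes through as claimed.
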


\begin{proof}
By taking $f(\vp)=-e^{-k\vp}$  ($k\geq2$) in Lemma \ref{Zero order estimate lemma},   we have
\begin{equation*}
\begin{split}
& k\int_{M}e^{-k\vp}(e^{\vp}\omega+\alpha e^{-\vp}\rho)\wedge\sqrt{-1}\de\vp\wedge\dbar\vp\wedge\omega^{n-2} \\
\leq {} & -2k\int_{M}e^{-k\vp}\sqrt{-1}\de\vp\wedge(e^{\vp}\dbar\omega-\alpha e^{-\vp}\dbar\rho)\wedge\omega^{n-2} \\
        & -2k\int_{M}e^{-k\vp}\dbar\vp\wedge(e^{\vp}\omega-\alpha e^{-\vp}\rho)\wedge\sqrt{-1}\de\omega^{n-2}
          -2\int_{M}e^{-k\vp}\mu\frac{\omega^{n}}{n!}.
\end{split}
\end{equation*}
It follows that
\begin{equation*}
\int_{M}e^{-(k-1)\vp}|\de\vp|_{g}^{2}\omega^{n}
\leq  C\int_{M}\left(e^{-(k+1)\vp}|\de\vp|_{g}+e^{-k\vp}\right)\omega^{n}.
\end{equation*}
Combining this with Lemma \ref{Effective zero order estimate lemma} and the Cauchy-Shwarz inequality,  we get
\begin{equation*}
\begin{split}
\int_{M}e^{-(k-1)\vp}|\de\vp|_{g}^{2}\omega^{n}
\leq {} & C\int_{M}\left(e^{-(k+3)\vp}+e^{-k\vp}\right)\omega^{n} \\
\leq {} & C(C_{0}^{4}+C_{0})\int_{M}e^{-(k-1)\vp}\omega^{n}.
\end{split}
\end{equation*}
Thus by the Moser iteration,  we  derive
\begin{equation*}
\|e^{-\vp}\|_{L^{\infty}} \leq C\|e^{-\vp}\|_{L^{1}} \leq CA.
\end{equation*}
Note that  $A\ll 1$.   Hence $e^{-\vp}\ll 1$.    Now we can apply Proposition \ref{Zero order estimate} to obtain
\begin{equation*}
\frac{A}{M_{0}} \leq e^{-\vp} \leq M_{0}A.
\end{equation*}
\end{proof}

\section{First and second order estimates (II)}

In this section, we provide another proof to derive a prior $C^1,C^2$ estimates for $\varphi$ of (\ref{Fu-Yau equation}) in case of   $\alpha<0$, but without the  restriction condition  (\ref{Restrictions}).   For convenience, we say a constant $C$ is uniform if it depends only on $\alpha$, $\rho$, $\mu$ and $(M,\omega)$, and we use $C_{A}$ to denote a uniform constant depending on $A$.  The main goal in this section is to prove the following proposition.

\begin{proposition}\label{Effective second order estimate}
Let $\alpha<0$ and $\vp$ be a smooth solution of (\ref{Fu-Yau equation}) on a Hermitian manifold $(M,\omega)$, which satisfies (\ref{Elliptic condition}) and (\ref{L^n normalization condition}). Then
\begin{equation*}
\sup_{M}|\de\dbar\vp|_{g} \leq C_{A}\sup_{M}|\de\vp|_{g}^{2}+C_{A},
\end{equation*}
where $C_{A}$ is a uniform constant depending on $A$.
\end{proposition}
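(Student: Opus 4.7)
The plan is as follows. Since $\ti\omega = e^{\vp}\omega + \alpha e^{-\vp}\rho + 2n\alpha \ddbar\vp$, and by Proposition \ref{Stronger zero order estimate} we have $e^{\vp}, e^{-\vp}$ uniformly bounded in terms of $A$, it suffices to bound the largest eigenvalue $\lambda_{1}$ of $\ti\omega$ with respect to $\omega$ by $C_{A}(1+\sup_M |\de\vp|_g^2)$, since then $|\de\dbar\vp|_g \leq C_A(1+\lambda_1)$. I would work with the rescaled Hessian equation \eqref{New Hessian type of Fu-Yau equation}; because $\alpha<0$, the term $-4\alpha e^{-\vp}|\de\vp|_g^{2}$ on the right is \emph{positive}, which makes the equation non-degenerate and is what ultimately supplies good third-order terms in the inequality.

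I would apply the maximum principle to an auxiliary function of the form
\[
Q = \log \lambda_{1} + \phi(|\de\vp|_g^{2}) + \psi(\vp),
\]
where $\phi(s) = -\tfrac{1}{2}\log(1 - s/(2K))$ with $K = 1 + \sup_M |\de\vp|_g^{2}$, so $\phi', \phi''$ are comparable to $1/K$ and $1/K^{2}$ respectively, and $\psi(\vp) = -B\vp$ for a large uniform constant $B$ (depending on $A$). At a maximum point $x_{0}$, I would choose a local unitary frame $\{e_i\}$ diagonalizing $\ti g$ with $\ti g_{1\ov1} \geq \cdots \geq \ti g_{n\ov n}$, and use the standard eigenvalue perturbation to make $\log\lambda_{1}$ smooth. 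Writing $F^{i\ov j}$ for the linearization of $\sigma_{2}(\hat\omega)$, the inequality $F^{i\ov j} e_i\ov e_j Q \leq 0$ needs to be unpacked into three blocks: (a) the standard concavity expansion
\[
F^{i\ov i} e_i \ov e_i \log\lambda_{1} \geq \frac{F^{i\ov i} e_i\ov e_i \ti g_{1\ov 1}}{\lambda_{1}} + \frac{F^{i\ov j,k\ov l}(\ti g_{i\ov j})_{1}(\ti g_{k\ov l})_{\ov 1}}{\lambda_{1}} - \frac{F^{i\ov i}|e_i \ti g_{1\ov 1}|^{2}}{\lambda_{1}^{2}} + (\text{eigengap terms}),
\]
(b) the gradient piece $\phi'(\cdot) F^{i\ov i}e_i\ov e_i|\de\vp|_g^{2}$, following the computation behind (\ref{Maximum principle gradient}), which supplies the good positive term $\phi'(\cdot)F^{i\ov i}(|\vp_{ki}|^{2}+|\vp_{k\ov i}|^{2})$, and (c) the linear piece $\psi'(\vp) F^{i\ov i}\vp_{i\ov i}$ whose dominant term is a large negative multiple of $F^{i\ov i}\ti g_{i\ov i}$, which in the $\Gamma_{2}$ cone is bounded below by a positive multiple of $\lambda_{1}\sum_i F^{i\ov i}$.

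To control the third-order terms in (a), I would differentiate \eqref{New Hessian type of Fu-Yau equation} twice along $e_{1}$: the $\sigma_{2}$ concavity gives a nonpositive $F^{i\ov j,k\ov l}$ term, and the twice-differentiated right-hand side produces, thanks to $\alpha<0$, the crucial positive third-order squared terms $|\alpha|e^{-\vp}\sum_{k}(|\vp_{1k\ov 1}|^{2}+|\vp_{1k 1}|^{2})$ arising from $-4\alpha e^{-\vp} e_{1}\ov e_{1}|\de\vp|_g^{2}$. Combined with the good terms from (b), these dominate the bad cross third-order terms via Cauchy--Schwarz, essentially as in the $\sigma_{2}$ estimate of \cite{CHZ17}. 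Terms coming from differentiating $f$ in \eqref{Definition of f} and from the non-K\"ahler torsion $\de\omega, \dbar\omega$ are lower-order in the third-derivative hierarchy and can be absorbed, at the cost of a constant depending on $A$.

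After collecting all terms, the choice $B \sim C_{A}$ makes the coefficient of $\lambda_{1}\sum_{i}F^{i\ov i}$ from block (c) dominate all remaining bad contributions, yielding
\[
0 \geq \frac{c\,\lambda_{1}(x_{0})}{K} - C_{A},
\]
so $\lambda_{1}(x_{0}) \leq C_{A} K$ and hence $\sup_{M}\lambda_{1} \leq C_{A}K$, which is the claim. The main obstacle is the careful accounting of third-order terms: the derivatives of $f$ and of $\omega$, together with the cross terms from the gradient piece, all must be shown to be absorbable by the two sources of good third-order squared terms (the one from differentiating the equation using $\alpha<0$, and the one from $\phi''$). Keeping track of how the constants depend on $A$ (through $e^{-\vp} \sim A$) versus on $K$ is the delicate bookkeeping step.
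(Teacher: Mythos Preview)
Your outline has the right shape but contains several substantive errors. First, $\sigma_2$ is not concave on $\Gamma_2$, so the assertion that ``$\sigma_2$ concavity gives a nonpositive $F^{i\ov{j},k\ov{l}}$ term'' fails; the paper works instead with $G^{i\ov{j}}=\de\sigma_2^{1/2}(\ti{\omega})/\de\ti{g}_{i\ov{j}}$, and the concavity of $\sigma_2^{1/2}$ is used twice (once to drop $-G^{i\ov{j},k\ov{l}}\nabla_1\ti{g}_{i\ov{j}}\nabla_{\ov{1}}\ti{g}_{k\ov{l}}\geq 0$, and again in a refined step to absorb part of the bad third-order term via the off-diagonal piece $-G^{1\ov{k},k\ov{1}}$). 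Second, the barrier sign is wrong: since $\alpha<0$, from $2n\alpha\vp_{k\ov{k}}=\ti{g}_{k\ov{k}}-e^{\vp}g_{k\ov{k}}-\alpha e^{-\vp}\rho_{k\ov{k}}$ one gets $G^{k\ov{k}}\vp_{k\ov{k}}\geq\sum_k G^{k\ov{k}}-CF^{1/2}$, so a \emph{positive} multiplier on $\vp$ is needed; the paper uses $e^{B\vp}$, which also supplies the extra good term $B^2e^{B\vp}G^{k\ov{k}}|\vp_k|^2$. Note too that $F^{i\ov{i}}\ti{g}_{i\ov{i}}$ is a bounded quantity (essentially $2\sigma_2$), not comparable to $\lambda_1\sum_iF^{i\ov{i}}$ as you claim.

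Third, twice-differentiating the right-hand side does not produce positive \emph{third}-order squared terms: $(|\de\vp|_g^2)_{1\ov{1}}$ yields good second-order squares $\sum_k(|\vp_{k1}|^2+|\vp_{k\ov{1}}|^2)$ together with \emph{linear} third-order terms $\sum_k|\vp_{k1\ov{1}}|$, the latter being bad. The paper handles this by computing $(F^{1/2})_{1\ov{1}}$ and showing that the second-order squares in $F_{1\ov{1}}$ dominate $|F_1|^2/(2F)$; the residual linear third-order terms are then converted via $dQ(x_0)=0$ and Cauchy--Schwarz. Finally, the paper's argument is genuinely two-pass: first use the $h'G^{k\ov{k}}|\vp_{k\ov{k}}|^2$ term (with concavity simply dropped) to get $\lambda_i\leq C_{A,B}K$ for $i\geq 2$; then feed this back so that $\sum_{k\geq 2}G^{k\ov{k}}|\nabla_k\ti{g}_{1\ov{1}}|^2/\lambda_1^2$ can be absorbed by $-G^{1\ov{k},k\ov{1}}|\nabla_1\ti{g}_{k\ov{1}}|^2/\lambda_1$, and read off $\lambda_1\leq C_AK$ from $h'G^{1\ov{1}}|\vp_{1\ov{1}}|^2$. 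This structure is absent from your outline, and without it the bad term $G^{k\ov{k}}|\nabla_k\ti{g}_{1\ov{1}}|^2/\lambda_1^2$ cannot be closed.
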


For simplicity, we write   (\ref{Fu-Yau equation t})  as
\begin{equation}\label{Fu-Yau equation 2-nd Hessian type}
\sigma_{2}(\ti{\omega}) = F,
\end{equation}
where
$$F= \frac{n(n-1)}{2}\left(e^{2\vp}+4|\alpha|e^\varphi|\de\vp|_{g}^{2}\right)+\frac{n(n-1)}{2}f$$
and
$f$ is defined by (\ref{Definition of f}).
Let $\lambda_{1}\geq\lambda_{2}\geq\cdots\geq\lambda_{n}$ be the eigenvalues of $\ti{\omega}$ with respect to
$\omega$. Since $\ti{\omega}\in\Gamma_{2}(M)$, by Proposition \ref{Stronger zero order estimate},  it is clear that
\begin{equation}\label{ddbar and lambda}
|\de\dbar\vp|_{g} \leq C_{A}\lambda_{1}+C_{A}.
\end{equation}

In Hermitian case, more troublesome terms will  appear when we commute the covariant derivatives (cf. (\ref{Commutation formulas}) below). To deal with these bad terms, we consider the following auxiliary function as in \cite{CHZ17},
\begin{equation*}
Q = \log\lambda_{1}+h(|\de\vp|_{g}^{2})+e^{B\vp},
\end{equation*}
where  $B$ is a constant to be determined later,
\begin{equation*}
h(t) = -\frac{1}{2}\log(2K-t) \text{~and~} K = \sup_{M}|\de\vp|_{g}^{2}+1.
\end{equation*}
By directly calculation, we have
\begin{equation}\label{h'}
\frac{1}{4K}\leq h'\leq \frac{1}{2K} \text{~and~} h''=2(h')^{2}.
\end{equation}

Let $x_{0}$ be the maximum point of $Q$. Around $x_{0}$, we choose holomorphic coordinate $(z^{1},z^{2},\cdots,z^{n})$ such that at $x_{0}$,
\begin{equation*}
g_{i\ov{j}} = \delta_{ij} \text{~and~}
\ti{g}_{i\ov{j}} = \delta_{ij}\lambda_{i}.
\end{equation*}
To prove Proposition \ref{Effective second order estimate}, by (\ref{ddbar and lambda}), it suffices to prove $\lambda_{1}\leq C_{A}K$. Without loss of generality, we assume that $\lambda_{1}\gg C_{A}K$.  Moreover, We may suppose  that $Q$ is smooth at $x_{0}$. Otherwise, we just need to apply a perturbation argument (cf. \cite{Sze15,STW17,CTW16}).

In the following calculation, we use the covariant derivatives with respect to the Chern connection $(\nabla, T^{\mathbb C}M)$ induced by $\omega$.  Let us recall the commutation formulas for covariant derivatives:
\begin{equation}\label{Commutation formulas}
\begin{split}
\vp_{i\ov{j}k} {} & = \vp_{ki\ov{j}}-T_{ki}^{p}\vp_{p\ov{j}}-R_{i\ov{j}k}^{\ \ \ \ p}\vp_{p}, \\
\vp_{i\ov{j}k\ov{l}} = \vp_{k\ov{l}i\ov{j}}-T_{ki}^{p}\vp_{p\ov{l}\ov{j}} {} & -\ov{T_{lj}^{q}}\vp_{k\ov{q}i}
                        +\vp_{p\ov{j}}R_{k\ov{l}i}^{\ \ \ \ p}-\vp_{p\ov{l}}R_{i\ov{j}k}^{\ \ \ \ p}
                       -T_{ik}^{p}\ov{T_{lj}^{q}}\vp_{p\ov{q}},
\end{split}
\end{equation}
where  $T_{ij}^{k}$ and $R_{i\ov{j}k}^{\ \ \ \ l}$ are components of torsion tensor and curvature tensor  induced by the Chern connection.
Let
\begin{equation*}
G^{i\ov{j}} = \frac{\de\sigma_{2}^{\frac{1}{2}}(\ti{\omega})}{\de\ti{g}_{i\ov{j}}},~
G^{i\ov{j},k\ov{l}} = \frac{\de^{2}\sigma_{2}^{\frac{1}{2}}(\ti{\omega})}{\de\ti{g}_{i\ov{j}}\de\ti{g}_{k\ov{l}}}.
\end{equation*}
The following lemmas are devoted to deriving lower bounds of $G^{k\ov{k}}\vp_{k\ov{k}}$, $G^{k\ov{k}}(|\de\vp|_{g}^{2})_{k\ov{k}}$ and $G^{k\ov{k}}(\lambda_{1})_{k\ov{k}}$.

\begin{lemma}\label{Lemma 1}
At $x_{0}$, we have
\begin{equation*}
G^{k\ov{k}}\vp_{k\ov{k}} \geq \sum_{k}G^{k\ov{k}}-CF^{\frac{1}{2}}
\end{equation*}
and
\begin{equation*}
\begin{split}
G^{k\ov{k}}(|\de\vp|_{g}^{2})_{k\ov{k}}
\geq {} & \frac{1}{2}\sum_{i}G^{k\ov{k}}\left(|\vp_{ik}|^{2}+|\vp_{i\ov{k}}|^{2}\right)
          -C_{A}K^{\frac{3}{2}} \\
        & -C_{A}K^{\frac{1}{2}}\sum_{i,k}\left(|\vp_{ik}|+|\vp_{i\ov{k}}|\right)-C\sum_{k}G^{k\ov{k}}.
\end{split}
\end{equation*}
\end{lemma}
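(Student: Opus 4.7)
The plan is to prove both inequalities by direct computation at $x_{0}$, converting second and third derivatives of $\vp$ into data involving $\ti{g}$ via the defining relation $2n\alpha\vp_{i\ov{k}} = \ti{g}_{i\ov{k}} - e^{\vp}g_{i\ov{k}} - \alpha e^{-\vp}\rho_{i\ov{k}}$ and the differentiated equation $\sum_{i}F^{i\ov{i}}\nabla_{k}\ti{g}_{i\ov{i}} = F_{k}$. The twin facts $\alpha < 0$ and $e^{\vp} \geq 1/(M_{0}A)$ (Proposition \ref{Stronger zero order estimate}) will make certain terms non-negative and large enough to absorb every error that appears.

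The first inequality follows quickly: at $x_{0}$ one has $\vp_{k\ov{k}} = (2n\alpha)^{-1}(\lambda_{k} - e^{\vp} - \alpha e^{-\vp}\rho_{k\ov{k}})$, and combining with $\sum_{k}G^{k\ov{k}}\lambda_{k} = F^{1/2}$ gives
\begin{equation*}
G^{k\ov{k}}\vp_{k\ov{k}} = \frac{F^{1/2}}{2n\alpha} + \frac{e^{\vp}}{2n|\alpha|}\sum_{k}G^{k\ov{k}} - \frac{e^{-\vp}}{2n}\sum_{k}\rho_{k\ov{k}}G^{k\ov{k}}.
\end{equation*}
For $A \ll 1$ the coefficient $e^{\vp}/(2n|\alpha|)$ exceeds $1 + Ce^{-\vp}$, which after absorbing the $\rho$-term leaves $\sum_{k}G^{k\ov{k}} - CF^{1/2}$.

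For the second inequality, I would first expand
\begin{equation*}
G^{k\ov{k}}(|\de\vp|_{g}^{2})_{k\ov{k}} = \sum_{i}G^{k\ov{k}}\bigl(|\vp_{ik}|^{2} + |\vp_{i\ov{k}}|^{2}\bigr) + 2\mathrm{Re}\sum_{i,k}G^{k\ov{k}}\vp_{ik\ov{k}}\vp_{\ov{i}},
\end{equation*}
then transform the cross term by the dummy relabeling $i \leftrightarrow k$ and apply the commutation formula $\vp_{ki\ov{i}} = \vp_{i\ov{i}k} + T_{ki}^{p}\vp_{p\ov{i}} + R_{i\ov{i}k}^{\ \ \ \ p}\vp_{p}$ to obtain $\sum_{i,k}G^{i\ov{i}}\vp_{i\ov{i}k}\vp_{\ov{k}}$ plus torsion/curvature errors. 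Inserting the differentiated-equation formula for $\sum_{i}G^{i\ov{i}}\vp_{i\ov{i}k}$ and multiplying by $\vp_{\ov{k}}$ splits the result into (a) the non-negative reservoir $(e^{\vp}/n|\alpha|)|\de\vp|_{g}^{2}\sum_{i}G^{i\ov{i}} \geq 0$ (discarded), (b) the main term $\mathrm{Re}\sum_{k}F_{k}\vp_{\ov{k}}/(2n\alpha F^{1/2})$, and (c) small $\rho$-weighted errors. Differentiating $F$ term by term shows (b) decomposes into a first-order piece of size at most $C_{A}K^{3/2}$ and a second-order piece $e^{\vp}\nabla_{k}|\de\vp|_{g}^{2}\cdot\vp_{\ov{k}}/F^{1/2}$; the key ratio bound $e^{\vp}K/F^{1/2} \leq C_{A}K^{1/2}$ (obtained by case analysis on whether $F$ is dominated by $e^{2\vp}$ or by $4|\alpha|e^{\vp}|\de\vp|_{g}^{2}$) converts this second-order piece into $C_{A}K^{1/2}\sum_{i,k}(|\vp_{ik}| + |\vp_{i\ov{k}}|)$. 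The torsion/curvature errors from the commutation and from (c) are controlled by Cauchy--Schwarz: half the quadratic term $\sum G^{k\ov{k}}(|\vp_{ik}|^{2} + |\vp_{i\ov{k}}|^{2})$ absorbs the $G^{k\ov{k}}|\vp_{p\ov{k}}|$-type pieces (yielding the stated factor $1/2$), and AM--GM against the reservoir (a) handles the rest, the remainder being a uniform-$C$ multiple of $\sum G^{k\ov{k}}$ because $e^{-\vp} \leq M_{0}A_{0}$ and $1/e^{\vp} \to 0$ as $A \to 0$.

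The main obstacle will be the case analysis for $e^{\vp}K/F^{1/2} \leq C_{A}K^{1/2}$: in the regime $F \sim e^{2\vp}$ one must use $K \leq e^{\vp} \leq C_{A}$ to convert $CK\sum(\cdots)$ into $C_{A}K^{1/2}\sum(\cdots)$, while in the regime $F \sim e^{\vp}|\de\vp|_{g}^{2}$ the same estimate follows from $F^{1/2} \gtrsim e^{\vp/2}K^{1/2}$. Both regimes must accommodate the sign-indefinite Hermitian pieces of $f$ coming from $\de\omega$, $\dbar\omega$, and $\de\dbar\omega$ (absent in the K\"{a}hler setting); ensuring these never spoil $F \gtrsim \max(e^{2\vp}, e^{\vp}|\de\vp|_{g}^{2})$ requires $A$ small enough that $e^{\vp}$-weighted terms dominate every competing $e^{-\vp}$-weighted correction, which is possible by the uniformity of $C_{A}$ in $A$.
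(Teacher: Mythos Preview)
Your overall strategy is exactly the paper's: for the first inequality you use $G^{k\ov{k}}\ti{g}_{k\ov{k}}=F^{1/2}$ together with $\alpha<0$ and $e^{\vp}\gg 1$; for the second you expand $(|\de\vp|_{g}^{2})_{k\ov{k}}$, commute third derivatives to bring out $G^{i\ov{i}}\vp_{i\ov{i}k}$, and substitute the differentiated equation. The reservoir term (a) and the Cauchy--Schwarz absorption into half the quadratic term are also what the paper does.

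There is, however, a genuine gap in your treatment of the ``second-order piece.'' You bound $\bigl|e^{\vp}\nabla_{k}|\de\vp|_{g}^{2}\,\vp_{\ov{k}}\bigr|$ by $e^{\vp}K\sum(|\vp_{ik}|+|\vp_{i\ov{k}}|)$ and then assert the ratio bound $e^{\vp}K/F^{1/2}\le C_{A}K^{1/2}$. Your case analysis for this ratio conflates the \emph{local} value $|\de\vp|_{g}^{2}(x_{0})$ with the \emph{global} supremum $K$: the regime ``$F\sim e^{2\vp}$'' only tells you $|\de\vp|_{g}^{2}(x_{0})\lesssim e^{\vp(x_{0})}$, not $K\le e^{\vp(x_{0})}$; likewise in the other regime $F\ge c\,e^{\vp}|\de\vp|_{g}^{2}(x_{0})$ gives $F^{1/2}\gtrsim e^{\vp/2}|\de\vp|_{g}(x_{0})$, not $e^{\vp/2}K^{1/2}$. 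Since $K$ is precisely the quantity you are trying to control, you cannot assume it is bounded here, and the claimed ratio bound is in fact false without further input.

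The fix is simple and is what the paper does: do not prematurely replace $|\de\vp|_{g}(x_{0})$ by $K^{1/2}$ in the numerator. Keep the local value, so the second-order piece is controlled by $e^{\vp}|\de\vp|_{g}^{2}(x_{0})\,F^{-1/2}\sum(|\vp_{ik}|+|\vp_{i\ov{k}}|)$. Now the single lower bound $F\ge c\,e^{\vp}|\de\vp|_{g}^{2}$ (valid because $\alpha<0$ forces $4|\alpha|e^{\vp}|\de\vp|_{g}^{2}$ to appear with a positive sign in $F$, and $f$ is dominated for $A$ small) gives
\[
\frac{e^{\vp}|\de\vp|_{g}^{2}(x_{0})}{F^{1/2}}\ \le\ C\,e^{\vp/2}|\de\vp|_{g}(x_{0})\ \le\ C_{A}K^{1/2},
\]
with no case analysis needed. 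This yields exactly the $-C_{A}K^{1/2}\sum(|\vp_{ik}|+|\vp_{i\ov{k}}|)$ term in the statement.
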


\begin{proof}
 By the definition of $\ti{\omega}$, we have
\begin{equation*}
\begin{split}
2n\alpha G^{k\ov{k}}\vp_{k\ov{k}}
= {} & G^{k\ov{k}}\left(\ti{g}_{k\ov{k}}-e^{\vp}g_{k\ov{k}}-\alpha e^{-\vp}\rho_{k\ov{k}}\right) \\[2mm]
  =  {} & \sigma_{2}^{\frac{1}{2}}(\ti{\omega})-e^{\vp}\sum_{k}G^{k\ov{k}}-\alpha e^{-\vp}G^{k\ov{k}}\rho_{k\ov{k}} \\
  =  {} & F^{\frac{1}{2}}-e^{\vp}\sum_{k}G^{k\ov{k}}-\alpha e^{-\vp}G^{k\ov{k}}\rho_{k\ov{k}}.
\end{split}
\end{equation*}
Note that $A\ll1$ and $\alpha<0$.   Then by Proposition \ref{Stronger zero order estimate},  we get
\begin{equation*}
G^{k\ov{k}}\vp_{k\ov{k}} \geq \sum_{k}G^{k\ov{k}}-CF^{\frac{1}{2}}.
\end{equation*}

To prove the second inequality in the lemma, we compute
\begin{equation}\label{Lemma 1 equation 1}
\begin{split}
        & G^{k\ov{k}}(|\de\vp|_{g}^{2})_{k\ov{k}} \\
   = {} & \sum_{i}G^{k\ov{k}}\left(|\vp_{ik}|^{2}+|\vp_{i\ov{k}}|^{2}\right)
          +2\textrm{Re}\left(\sum_{i}G^{k\ov{k}}\vp_{i}\vp_{\ov{i}k\ov{k}}\right) \\
\geq {} & \sum_{i}G^{k\ov{k}}\left(|\vp_{ik}|^{2}+|\vp_{i\ov{k}}|^{2}\right)
          +2\textrm{Re}\left(\sum_{i}G^{k\ov{k}}\vp_{i}\vp_{k\ov{k}\ov{i}}\right) \\[1mm]
        & -C\sum_{i}G^{k\ov{k}}\left(|\vp_{ik}|+|\vp_{i\ov{k}}|\right) \\
\geq {} & \frac{1}{2}\sum_{i}G^{k\ov{k}}\left(|\vp_{ik}|^{2}+|\vp_{i\ov{k}}|^{2}\right)
          +2\textrm{Re}\left(\sum_{i}G^{k\ov{k}}\vp_{i}\vp_{k\ov{k}\ov{i}}\right)
          -C\sum_{k}G^{k\ov{k}}.
\end{split}
\end{equation}
On the other hand, by differentiating (\ref{Fu-Yau equation 2-nd Hessian type}) along $\nabla_{\ov{i}}$, we see that
\begin{equation*}
2\textrm{Re}\left(\sum_{i}G^{k\ov{k}}\vp_{i}\vp_{k\ov{k}\ov{i}}\right)
= 2\textrm{Re}\left(\sum_{i}\vp_{i}(F^{\frac{1}{2}})_{\ov{i}}\right)
= \frac{1}{F^{\frac{1}{2}}}\textrm{Re}\left(\sum_{i}\vp_{i}F_{\ov{i}}\right).
\end{equation*}
 Since
\begin{equation*}
F \geq \frac{1}{C}(e^{2\vp}+e^{\vp}|\de\vp|_{g}^{2})
\end{equation*}
and
\begin{equation*}
|F_{\ov{i}}| \leq Ce^{2\vp}|\de\vp|_{g}+Ce^{\vp}|\de\vp|_{g}^{3}
                  +Ce^{\vp}(|\de\vp|_{g}+1)\sum_{i,k}\left(|\vp_{ik}|+|\vp_{i\ov{k}}|\right)+C,
\end{equation*}
we get
\begin{equation*}
\begin{split}
2\textrm{Re}\left(\sum_{i}G^{k\ov{k}}\vp_{i}\vp_{\ov{i}k\ov{k}}\right)
\geq {} & -Ce^{\frac{3}{2}\vp}|\de\vp|_{g}-Ce^{\frac{1}{2}\vp}|\de\vp|_{g}^{3} \\
        & +Ce^{\frac{1}{2}\vp}(|\de\vp|_{g}+1)\sum_{i,k}\left(|\vp_{ik}|+|\vp_{i\ov{k}}|\right)-C.
\end{split}
\end{equation*}
Hence, substituting this into (\ref{Lemma 1 equation 1}), we obtain
\begin{equation*}
\begin{split}
G^{k\ov{k}}(|\de\vp|_{g}^{2})_{k\ov{k}}
\geq {} & \frac{1}{2}\sum_{i}G^{k\ov{k}}\left(|\vp_{ik}|^{2}+|\vp_{i\ov{k}}|^{2}\right)
          -Ce^{\frac{3}{2}\vp}|\de\vp|_{g}-Ce^{\frac{1}{2}\vp}|\de\vp|_{g}^{3} \\[1mm]
        & -Ce^{\frac{1}{2}\vp}(|\de\vp|_{g}+1)\sum_{i,k}\left(|\vp_{ik}|+|\vp_{i\ov{k}}|\right)-C\sum_{k}G^{k\ov{k}}-C \\
\geq {} & \frac{1}{2}\sum_{i}G^{k\ov{k}}\left(|\vp_{ik}|^{2}+|\vp_{i\ov{k}}|^{2}\right)
          -C_{A}K^{\frac{3}{2}} \\[1mm]
        & -C_{A}K^{\frac{1}{2}}\sum_{i,k}\left(|\vp_{ik}|+|\vp_{i\ov{k}}|\right)-C\sum_{k}G^{k\ov{k}}.
\end{split}
\end{equation*}
\end{proof}

\begin{lemma}\label{Lemma 2}
At $x_{0}$, we have
\begin{equation*}
\begin{split}
G^{k\ov{k}}(\lambda_{1})_{k\ov{k}}
\geq {} & -G^{i\ov{j},k\ov{l}}\nabla_{1}\ti{g}_{i\ov{j}}\nabla_{\ov{1}}\ti{g}_{p\ov{q}}
          -C\sum_{k}G^{k\ov{k}}|\vp_{1\ov{1}k}|-C_{A}\sum_{k}|\vp_{k1\ov{1}}| \\
        & -C_{A}(\lambda_{1}+K)\sum_{i}G^{i\ov{i}}-C_{A}K^{-\frac{1}{2}}\sum_{i,k}|\vp_{i\ov{k}}|^{2}
          -C_{A}K^{\frac{3}{2}}.
\end{split}
\end{equation*}
\end{lemma}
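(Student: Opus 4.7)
\medskip

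\noindent\textbf{Proof proposal for Lemma \ref{Lemma 2}.} The plan is to combine the standard second–derivative formula for the largest eigenvalue of a Hermitian matrix with the second variation of the equation (\ref{Fu-Yau equation 2-nd Hessian type}) written as $\sigma_{2}^{1/2}(\ti\omega)=F^{1/2}$. At $x_{0}$ the tensor $\ti g_{i\ov j}$ is diagonal with $\ti g_{1\ov 1}=\lambda_{1}$ the largest eigenvalue. Under the (harmless, by perturbation) assumption that $\lambda_{1}$ is simple, the classical eigenvalue formula gives
\begin{equation*}
(\lambda_{1})_{k\ov{k}}\ \geq\ \nabla_{k}\nabla_{\ov{k}}\ti g_{1\ov 1}+\sum_{p>1}\frac{|\nabla_{k}\ti g_{1\ov p}|^{2}+|\nabla_{\ov k}\ti g_{1\ov p}|^{2}}{\lambda_{1}-\lambda_{p}},
\end{equation*}
so that after multiplying by $G^{k\ov k}$ one just has to estimate $G^{k\ov k}\nabla_{k}\nabla_{\ov k}\ti g_{1\ov 1}$ from below (the splitting term is non-negative and may be discarded).

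The main step is to substitute $\ti g_{1\ov 1}=e^{\vp}g_{1\ov 1}+\alpha e^{-\vp}\rho_{1\ov 1}+2n\alpha\vp_{1\ov 1}$ and compute $\nabla_{k}\nabla_{\ov k}\ti g_{1\ov 1}$. The dominant fourth-order contribution is $2n\alpha\,\vp_{1\ov 1 k\ov k}$; using the commutation formulas (\ref{Commutation formulas}) twice I can replace it by $2n\alpha\,\vp_{k\ov k 1\ov 1}$ modulo curvature and torsion terms that are linear in $\vp_{i\ov j}$ (hence $\le C_{A}\lambda_{1}+C_{A}$), plus third-order terms of the form $\vp_{1\ov 1 k}$ and $\vp_{k 1\ov 1}$. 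Regrouping, $G^{k\ov k}\nabla_{k}\nabla_{\ov k}\ti g_{1\ov 1}$ equals $G^{k\ov k}\nabla_{1}\nabla_{\ov 1}\ti g_{k\ov k}$ plus error terms bounded by $C\sum_{k}G^{k\ov k}|\vp_{1\ov 1 k}|+C_{A}(\lambda_{1}+K)\sum_{i}G^{i\ov i}$, exactly the shape appearing on the right-hand side.

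The other main step is to differentiate the equation $\sigma_{2}^{1/2}(\ti\omega)=F^{1/2}$ once along $\nabla_{1}$ and once along $\nabla_{\ov 1}$:
\begin{equation*}
G^{i\ov j}\nabla_{1}\nabla_{\ov 1}\ti g_{i\ov j}=-G^{i\ov j,k\ov l}\nabla_{1}\ti g_{i\ov j}\nabla_{\ov 1}\ti g_{k\ov l}+(F^{1/2})_{1\ov 1}.
\end{equation*}
The first term on the right is the concavity term that the lemma keeps as is. For $(F^{1/2})_{1\ov 1}=\tfrac{1}{2F^{1/2}}(F_{1\ov 1}-\tfrac{|F_{1}|^{2}}{2F})$, I expand $F$ using (\ref{Definition of f}) and Proposition \ref{Stronger zero order estimate}. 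Since $F\sim e^{\vp}K$, the factor $F^{-1/2}\sim K^{-1/2}$ produces the characteristic $K^{-1/2}$ coefficient; the $|\de\vp|_{g}^{2}$ contribution in $F$ gives the third-order terms $C_{A}\sum_{k}|\vp_{k 1\ov 1}|$ (from $\nabla_{1}\nabla_{\ov 1}|\de\vp|_{g}^{2}$) and the second-order terms $C_{A}K^{-1/2}\sum_{i,k}|\vp_{i\ov k}|^{2}$ after Cauchy–Schwarz, while the remaining contributions from $e^{\vp}$, $\rho$, $\dbar\omega$, $\dbar\rho$ and $\mu$ are bounded by $C_{A}K^{3/2}$ using the $C^{0}$ bound and the $C^{1}$ bound $|\de\vp|_{g}\le K^{1/2}$.

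Combining everything yields the claimed inequality. The main obstacle is a careful bookkeeping of the many commutator terms coming from (\ref{Commutation formulas}) in the Hermitian (non-K\"ahler) setting: the torsion $T_{ij}^{k}$ of the Chern connection produces extra third-order terms $\vp_{p\ov j k}$, $\vp_{k\ov j \ov l}$ and these must be dominated by $C\sum_{k}G^{k\ov k}|\vp_{1\ov 1 k}|+C_{A}\sum_{k}|\vp_{k 1\ov 1}|$ using Young's inequality, while the mixed second-order remainders are controlled by the $K^{-1/2}|\vp_{i\ov k}|^{2}$ and $(\lambda_{1}+K)\sum_{i}G^{i\ov i}$ bounds.
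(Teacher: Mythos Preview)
Your overall strategy---eigenvalue formula, commute fourth derivatives, differentiate the equation, estimate $(F^{1/2})_{1\ov 1}$---is exactly the paper's. But there is a genuine gap in the Hermitian setting: you say ``the splitting term is non-negative and may be discarded'', and this is precisely what fails.

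When you commute $\vp_{1\ov 1 k\ov k}$ to $\vp_{k\ov k 1\ov 1}$ via (\ref{Commutation formulas}), the torsion produces third-order terms $T_{k1}^{p}\vp_{p\ov k\ov 1}$ and $\ov{T_{k1}^{q}}\vp_{k\ov q 1}$ in which the free index $p$ (or $q$) ranges over \emph{all} values $1,\dots,n$, not just $1$. Thus after contracting with $G^{k\ov k}$ one faces $-C\sum_{i,k}G^{k\ov k}|\vp_{i\ov k\ov 1}|$ with a general $i$. For $i>1$ these terms are neither of the shape $G^{k\ov k}|\vp_{1\ov 1 k}|$ nor of the shape $|\vp_{k1\ov 1}|$, and no application of Young's inequality can convert them into those shapes: Young would give $G^{k\ov k}|\vp_{i\ov k\ov 1}|\le \lambda_{1}^{-1}G^{k\ov k}|\vp_{i\ov k\ov 1}|^{2}+\lambda_{1}G^{k\ov k}$, and the quadratic third-order piece has nowhere to go in your scheme.

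The paper keeps the splitting term, uses $\lambda_{1}-\lambda_{j}\le n\lambda_{1}$ to bound it below by $\tfrac{2}{n\lambda_{1}}\sum_{j>1}G^{k\ov k}|\vp_{j\ov k\ov 1}|^{2}$, and then Cauchy--Schwarz gives, for each $j>1$,
\[
-CG^{k\ov k}|\vp_{j\ov k\ov 1}|+\tfrac{2}{n\lambda_{1}}G^{k\ov k}|\vp_{j\ov k\ov 1}|^{2}\ \ge\ -C\lambda_{1}G^{k\ov k},
\]
which is absorbed into $-C_{A}(\lambda_{1}+K)\sum_{i}G^{i\ov i}$. Only the $j=1$ contribution survives, and that one does reduce (after one more first-order commutation) to $|\vp_{1\ov 1 k}|$. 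So the splitting term is not optional here; your outline would go through verbatim in the K\"ahler case ($T\equiv 0$), but not on the Hermitian manifolds the lemma is stated for.
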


\begin{proof}
By the formulas for the derivatives of $\lambda_{1}$ (cf. \cite{Sze15,STW17,CTW16}), we have
\begin{equation}\label{Lemma 2 equation 1}
\begin{split}
         G^{k\ov{k}}(\lambda_{1})_{k\ov{k}}
   &=   G^{k\ov{k}}\nabla_{\ov{k}}\nabla_{k}\ti{g}_{1\ov{1}}
          +2\sum_{j>1}G^{k\ov{k}}\frac{|\nabla_{k}\ti{g}_{1\ov{j}}|^{2}}{\lambda_{1}-\lambda_{j}} \\
& \geq   2n\alpha G^{k\ov{k}}\vp_{1\ov{1}k\ov{k}}
          +\frac{2}{n\lambda_{1}}\sum_{j>1}G^{k\ov{k}}|\nabla_{k}\ti{g}_{i\ov{j}}|^{2}
          -Ce^{\vp}(\lambda_{1}+K)\sum_{i}G^{i\ov{i}} \\
&\geq   2n\alpha G^{k\ov{k}}\vp_{1\ov{1}k\ov{k}}
          +\frac{2}{n\lambda_{1}}\sum_{j>1}G^{k\ov{k}}|\vp_{j\ov{k}\ov{1}}|^{2}
          -Ce^{\vp}(\lambda_{1}+K)\sum_{i}G^{i\ov{i}} \\
\end{split}
\end{equation}
On the other hand, by differentiating (\ref{Fu-Yau equation 2-nd Hessian type}) along $\nabla_{\ov{1}}\nabla_{1}$, we  have
\begin{equation*}
G^{k\ov{k}}\nabla_{\ov{1}}\nabla_{1}(e^{\vp}g_{k\ov{k}}+\alpha e^{-\vp}\rho_{k\ov{k}}+2n\alpha\vp_{k\ov{k}})
= (F^{\frac{1}{2}})_{1\ov{1}}-G^{i\ov{j},k\ov{l}}\nabla_{1}\ti{g}_{i\ov{j}}\nabla_{\ov{1}}\ti{g}_{p\ov{q}}.
\end{equation*}
Then  by  the commutation formula (\ref{Commutation formulas}), we see that
\begin{equation*}
\begin{split}
2n\alpha G^{k\ov{k}}\vp_{1\ov{1}k\ov{k}}
\geq {} & -C\sum_{i,k}G^{k\ov{k}}|\vp_{i\ov{k}\ov{1}}|-Ce^{\vp}(\lambda_{1}+K)\sum_{i}G^{i\ov{i}} \\
        & +(F^{\frac{1}{2}})_{1\ov{1}}-G^{i\ov{j},k\ov{l}}\nabla_{1}\ti{g}_{i\ov{j}}\nabla_{\ov{1}}\ti{g}_{p\ov{q}}.
\end{split}
\end{equation*}
Hence, substituting this into (\ref{Lemma 2 equation 1}) and  using the Cauchy-Schwarz inequality, we obtain
\begin{equation}\label{Lemma 2 equation 4}
\begin{split}
G^{k\ov{k}}(\lambda_{1})_{k\ov{k}}
\geq {} & -C\sum_{k}G^{k\ov{k}}|\vp_{1\ov{1}k}|-Ce^{\vp}(\lambda_{1}+K)\sum_{i}G^{i\ov{i}} \\
        & +(F^{\frac{1}{2}})_{1\ov{1}}-G^{i\ov{j},k\ov{l}}\nabla_{1}\ti{g}_{i\ov{j}}\nabla_{\ov{1}}\ti{g}_{p\ov{q}}.
\end{split}
\end{equation}

Next, we  deal with the term $(F^{\frac{1}{2}})_{1\ov{1}}$.     Clearly,
\begin{equation}\label{Lemma 2 equation 2}
(F^{\frac{1}{2}})_{1\ov{1}} = \frac{1}{2F^{\frac{1}{2}}}\left(F_{1\ov{1}}-\frac{|F_{1}|^{2}}{2F}\right).
\end{equation}
For the first term of (\ref{Lemma 2 equation 2}), we compute
\begin{equation*}
\begin{split}
F_{1\ov{1}} = {} & 2n(n-1)|\alpha|(e^{\vp}|\de\vp|_{g}^{2})_{1\ov{1}}+\frac{n(n-1)}{2}(e^{2\vp})_{1\ov{1}}+\frac{n(n-1)}{2}f_{1\ov{1}} \\[1mm]
\geq {} & 2n(n-1)|\alpha|e^{\vp}\sum_{k}\left(|\vp_{k1}|^{2}+|\vp_{k\ov{1}}|^{2}\right)-Ce^{\vp}|\de\vp|_{g}\sum_{k}|\vp_{k1\ov{1}}| \\
        & -Ce^{\vp}|\de\vp|_{g}^{2}|\vp_{1\ov{1}}|-Ce^{\vp}|\de\vp|_{g}^{4}
          -Ce^{\vp}|\de\vp|_{g}^{2}\sum_{k}\left(|\vp_{k1}|+|\vp_{k\ov{1}}|\right) \\
        & -Ce^{2\vp}|\vp_{1\ov{1}}|+\frac{n(n-1)}{2}f_{1\ov{1}} \\
\geq {} & \frac{3}{2}n(n-1)|\alpha|e^{\vp}\sum_{k}\left(|\vp_{k1}|^{2}+|\vp_{k\ov{1}}|^{2}\right)-Ce^{\vp}(|\de\vp|_{g}+1)\sum_{k}|\vp_{k1\ov{1}}| \\
        & -Ce^{\vp}|\de\vp|_{g}^{4}-Ce^{3\vp}.
\end{split}
\end{equation*}
For the second term of (\ref{Lemma 2 equation 2}), we have
\begin{equation*}
\begin{split}
F_{1} = {} & 2n(n-1)|\alpha|e^{\vp}\nabla_{1}|\de\vp|_{g}^{2}+2n(n-1)|\alpha|e^{\vp}|\de\vp|_{g}^{2}\vp_{1} \\
           & +n(n-1)e^{2\vp}\vp_{1}+\frac{n(n-1)}{2}f_{1},
\end{split}
\end{equation*}
which implies
\begin{equation*}
\begin{split}
|F_{1}|^{2} \leq {} & \left(4+\frac{1}{200}\right)n^{2}(n-1)^{2}\alpha^{2}e^{2\vp}|\de\vp|_{g}^{2}\sum_{k}|\vp_{k1}|^{2} \\
                    & +Ce^{2\vp}|\de\vp|_{g}^{2}\sum_{k}|\vp_{k\ov{1}}|^{2}
                      +Ce^{2\vp}|\de\vp|_{g}^{6}+Ce^{4\vp}|\de\vp|_{g}^{2}+C|f_{1}|^{2} \\
            \leq {} & \left(4+\frac{1}{100}\right)n^{2}(n-1)^{2}\alpha^{2}e^{2\vp}|\de\vp|_{g}^{2}\sum_{k}|\vp_{k1}|^{2}
                      +Ce^{2\vp}\sum_{k}|\vp_{k1}|^{2} \\
                    & +Ce^{2\vp}(|\de\vp|_{g}^{2}+1)\sum_{k}|\vp_{k\ov{1}}|^{2}
                      +Ce^{2\vp}|\de\vp|_{g}^{6}+Ce^{4\vp}|\de\vp|_{g}^{2}+C. \\
\end{split}
\end{equation*}
On the other hand,  by the definition of $F$, we have
\begin{equation*}
2F \geq \frac{99}{100}n(n-1)\left(e^{2\vp}+4|\alpha|e^{\vp}|\de\vp|_{g}^{2}\right).
\end{equation*}
Thus, substituting these estimates into (\ref{Lemma 2 equation 2}),  we get
\begin{equation}\label{Lemma 2 equation 3}
\begin{split}
& (F^{\frac{1}{2}})_{1\ov{1}} \\
\geq {} & -\frac{C}{F^{\frac{1}{2}}}\left(e^{\vp}(|\de\vp|_{g}+1)\sum_{k}|\vp_{k1\ov{1}}|+e^{\vp}\sum_{k,l}|\vp_{k\ov{l}}|^{2}
          +e^{\vp}|\de\vp|_{g}^{4}+e^{3\vp}\right) \\
\geq {} & -Ce^{\frac{1}{2}\vp}\sum_{k}|\vp_{k1\ov{1}}|-Ce^{\vp}F^{-\frac{1}{2}}\sum_{k,l}|\vp_{k\ov{l}}|^{2}
          -Ce^{\frac{1}{2}\vp}|\de\vp|_{g}^{3}-Ce^{2\vp}.
\end{split}
\end{equation}
 We note that a  similar estimate  of  (\ref{Lemma 2 equation 3}) was also appeared    in \cite{PPZ16b}.

Combining (\ref{Lemma 2 equation 4}) and (\ref{Lemma 2 equation 3}), we finally prove that
\begin{equation*}
\begin{split}
        & \ G^{k\ov{k}}(\lambda_{1})_{k\ov{k}} \\[1mm]
\geq {} & -G^{i\ov{j},k\ov{l}}\nabla_{1}\ti{g}_{i\ov{j}}\nabla_{\ov{1}}\ti{g}_{p\ov{q}}
          -C\sum_{k}G^{k\ov{k}}|\vp_{1\ov{1}k}|-Ce^{\frac{1}{2}\vp}\sum_{k}|\vp_{k1\ov{1}}| \\
        & -Ce^{\vp}(\lambda_{1}+K)\sum_{i}G^{i\ov{i}}-Ce^{\vp}F^{-\frac{1}{2}}\sum_{i,k}|\vp_{i\ov{k}}|^{2}
          -Ce^{\frac{1}{2}\vp}|\de\vp|_{g}^{3}-Ce^{2\vp} \\
\geq {} & -G^{i\ov{j},k\ov{l}}\nabla_{1}\ti{g}_{i\ov{j}}\nabla_{\ov{1}}\ti{g}_{p\ov{q}}
          -C\sum_{k}G^{k\ov{k}}|\vp_{1\ov{1}k}|-C_{A}\sum_{k}|\vp_{k1\ov{1}}| \\
        & -C_{A}(\lambda_{1}+K)\sum_{i}G^{i\ov{i}}-C_{A}K^{-\frac{1}{2}}\sum_{i,k}|\vp_{i\ov{k}}|^{2}
          -C_{A}K^{\frac{3}{2}}.
\end{split}
\end{equation*}
\end{proof}

Using the above lemmas, we prove the following lower bound of $G^{k\ov{k}}Q_{k\ov{k}}$ at $x_{0}$.
\begin{lemma}
At $x_{0}$, for any $\ve\in(0,1)$, we have
\begin{equation}\label{Lower bound}
\begin{split}
0 \geq G^{k\ov{k}}Q_{k\ov{k}}
  \geq {} & -\frac{G^{i\ov{j},k\ov{l}}\nabla_{1}\ti{g}_{i\ov{j}}\nabla_{\ov{1}}\ti{g}_{p\ov{q}}}{\lambda_{1}}
            -(1+\ve)\frac{G^{k\ov{k}}|\nabla_{k}\ti{g}_{1\ov{1}}|^{2}}{\lambda_{1}^{2}} \\
          & +\frac{h'}{4}\sum_{i}G^{k\ov{k}}\left(|\vp_{ik}|^{2}+|\vp_{i\ov{k}}|^{2}\right)
            +h''G^{k\ov{k}}|\nabla_{k}|\de\vp|_{g}^{2}|^{2} \\
          & +B^{2}e^{B\vp}G^{k\ov{k}}|\vp_{k}|^{2}+\left(\frac{B}{2}e^{B\vp}-\frac{C_{A}}{\ve}\right)\sum_{k}G^{k\ov{k}}.
\end{split}
\end{equation}
\end{lemma}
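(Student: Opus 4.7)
The first step is the standard maximum principle: since $x_0$ is the maximum of $Q$ and $G^{k\bar k}$ is positive definite, we get $G^{k\bar k}Q_{k\bar k}\leq 0$. I would then expand $Q_{k\bar k}$ term by term. For the $\log\lambda_1$ piece, this gives
\begin{equation*}
(\log\lambda_1)_{k\bar k}=\frac{(\lambda_1)_{k\bar k}}{\lambda_1}-\frac{|(\lambda_1)_k|^{2}}{\lambda_1^{2}},
\end{equation*}
and at $x_0$ the first order eigenvalue formula yields $(\lambda_1)_k=\nabla_k\ti g_{1\bar 1}$. For the $h(|\de\vp|_g^2)$ piece, the chain rule produces the linear term $h'(|\de\vp|_g^2)_{k\bar k}$ together with the quadratic term $h''|\nabla_k|\de\vp|_g^2|^{2}$, which is already in the desired form. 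For the $e^{B\vp}$ piece, one gets $Be^{B\vp}\vp_{k\bar k}+B^{2}e^{B\vp}|\vp_k|^{2}$, again producing a good quadratic term directly.

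Next I would feed in Lemma \ref{Lemma 1} and Lemma \ref{Lemma 2}. Lemma \ref{Lemma 2}, divided by $\lambda_1$, supplies the leading terms $-G^{i\bar j,k\bar l}\nabla_1\ti g_{i\bar j}\nabla_{\bar 1}\ti g_{p\bar q}/\lambda_1$ together with the bad terms $-C\sum_k G^{k\bar k}|\vp_{1\bar 1 k}|/\lambda_1$, $-C_A\sum_k|\vp_{k1\bar 1}|/\lambda_1$, $-C_A(\lambda_1+K)\sum_i G^{i\bar i}/\lambda_1$, $-C_A K^{-1/2}\sum_{i,k}|\vp_{i\bar k}|^{2}/\lambda_1$, and $-C_A K^{3/2}/\lambda_1$. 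Lemma \ref{Lemma 1}, multiplied by $h'$, gives the good term $\tfrac{h'}{2}\sum_i G^{k\bar k}(|\vp_{ik}|^{2}+|\vp_{i\bar k}|^{2})$ against bad terms of orders $C_A h' K^{3/2}$, $C_A h' K^{1/2}\sum(|\vp_{ik}|+|\vp_{i\bar k}|)$ and $Ch'\sum_k G^{k\bar k}$. Lemma \ref{Lemma 1} applied to $Be^{B\vp}\vp_{k\bar k}$ supplies $Be^{B\vp}\sum_k G^{k\bar k}-CBe^{B\vp}F^{1/2}$, and $F^{1/2}\le C_A(1+K^{1/2})$ by Proposition \ref{Stronger zero order estimate}.

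The critical ingredient is to relate the commutator term $-C\sum_k G^{k\bar k}|\vp_{1\bar 1 k}|/\lambda_1$ to $-|\nabla_k\ti g_{1\bar 1}|^{2}/\lambda_1^{2}$. Solving $\ti g_{1\bar 1}=e^{\vp}+\alpha e^{-\vp}\rho_{1\bar 1}+2n\alpha\vp_{1\bar 1}$ for $\vp_{1\bar 1 k}$ gives
\begin{equation*}
|\vp_{1\bar 1 k}|\leq \tfrac{1}{2n|\alpha|}|\nabla_k\ti g_{1\bar 1}|+Ce^{\vp}|\vp_k|+C,
\end{equation*}
and then Cauchy--Schwarz with parameter $\ve$ upgrades the coefficient of $-G^{k\bar k}|\nabla_k\ti g_{1\bar 1}|^{2}/\lambda_1^{2}$ from $1$ to $1+\ve$ at the cost of a $\tfrac{C_A}{\ve}\sum_k G^{k\bar k}$ term. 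The remaining bad terms from Lemma \ref{Lemma 2} are tamed using the assumption $\lambda_1\gg C_A K$ (so $C_A K^{3/2}/\lambda_1$, $C_A K\sum G^{i\bar i}/\lambda_1$ and the $K^{-1/2}|\vp_{i\bar k}|^{2}/\lambda_1$ contribution become negligible), while the bad terms from Lemma \ref{Lemma 1} are absorbed into a quarter of the good $\tfrac{h'}{2}\sum G^{k\bar k}(|\vp_{ik}|^{2}+|\vp_{i\bar k}|^{2})$ using $h'\leq \tfrac{1}{2K}$; this leaves the coefficient $\tfrac{h'}{4}$ in the final inequality. Finally, half of $Be^{B\vp}\sum_k G^{k\bar k}$ absorbs the $-CBe^{B\vp}F^{1/2}$ term, leaving $\tfrac{B}{2}e^{B\vp}\sum G^{k\bar k}$ and the negative contribution $-\tfrac{C_A}{\ve}\sum G^{k\bar k}$ from the Cauchy--Schwarz step.

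The main obstacle is the bookkeeping for the $|\nabla_k\ti g_{1\bar 1}|^{2}/\lambda_1^{2}$ term: the Cauchy--Schwarz argument must be tuned with the free parameter $\ve$ so that exactly the coefficient $1+\ve$ appears on the right-hand side, with every other bad term either controlled by the assumption $\lambda_1\gg C_A K$ or placed into $\tfrac{C_A}{\ve}\sum G^{k\bar k}$. Keeping the quadratic terms $h''G^{k\bar k}|\nabla_k|\de\vp|_g^{2}|^{2}$ and $B^{2}e^{B\vp}G^{k\bar k}|\vp_k|^{2}$ untouched is important, since these are precisely the reserves that will later be matched against $-(1+\ve)G^{k\bar k}|\nabla_k\ti g_{1\bar 1}|^{2}/\lambda_1^{2}$ via the critical point identity $\nabla_k\ti g_{1\bar 1}/\lambda_1=-h'\nabla_k|\de\vp|_g^{2}-Be^{B\vp}\vp_k$ in the next stage of the proof.
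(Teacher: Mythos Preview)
Your outline follows the paper's approach closely, but one term is left unaccounted for: the third-order contribution $-C_{A}\sum_{k}|\vp_{k1\ov{1}}|/\lambda_{1}$ coming from Lemma~\ref{Lemma 2}. You list it among the bad terms, yet neither your Cauchy--Schwarz step (which handles only the companion term $-C\sum_{k}G^{k\ov{k}}|\vp_{1\ov{1}k}|/\lambda_{1}$ carrying the factor $G^{k\ov{k}}$) nor the hypothesis $\lambda_{1}\gg C_{A}K$ (which controls only zeroth- and second-order quantities) disposes of it; it is a genuine third derivative with no $G^{k\ov{k}}$ weight. In the paper this term is handled \emph{inside} the present lemma, not deferred: one combines the relation $2n\alpha\vp_{1\ov{1}k}=\nabla_{k}\ti g_{1\ov{1}}-e^{\vp}\vp_{k}-\alpha(e^{-\vp}\rho_{1\ov{1}})_{k}$ with the critical-point identity $\nabla_{k}\ti g_{1\ov{1}}/\lambda_{1}=-h'\nabla_{k}|\de\vp|_{g}^{2}-Be^{B\vp}\vp_{k}$ (the identity you reserved for ``the next stage'') to obtain
\[
-\frac{C_{A}\sum_{k}|\vp_{k1\ov{1}}|}{\lambda_{1}}
\;\geq\; -C_{A}K^{\frac{1}{2}}h'\sum_{i,k}\bigl(|\vp_{ik}|+|\vp_{i\ov{k}}|\bigr)-C_{A}K^{\frac{1}{2}}Be^{B\vp}.
\]
These pieces are then absorbed, the first into a small fraction of $\tfrac{h'}{2}\sum_{i}G^{k\ov{k}}(|\vp_{ik}|^{2}+|\vp_{i\ov{k}}|^{2})$ (which is what drops the coefficient to $\tfrac{h'}{4}$) and the second into $\tfrac{B}{2}e^{B\vp}\sum_{k}G^{k\ov{k}}$ via $\sum_{k}G^{k\ov{k}}\geq\tfrac{n-1}{2F^{1/2}}\lambda_{1}$ together with $\lambda_{1}\gg C_{A}K$. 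Without invoking $dQ=0$ already here, the stated inequality with precisely the coefficients $\tfrac{h'}{4}$ and $\tfrac{B}{2}e^{B\vp}$ does not close.
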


\begin{proof}
By \eqref{h'},  Lemma \ref{Lemma 1} and Lemma \ref{Lemma 2}, we see that
\begin{equation}\label{Lower bound equation 1}
\begin{split}
& G^{k\ov{k}}Q_{k\ov{k}}(x_{0})\\
 \geq {} & -\frac{G^{i\ov{j},k\ov{l}}\nabla_{1}\ti{g}_{i\ov{j}}\nabla_{\ov{1}}\ti{g}_{p\ov{q}}}{\lambda_{1}}
             -\frac{G^{k\ov{k}}|\nabla_{k}\ti{g}_{1\ov{1}}|^{2}}{\lambda_{1}^{2}}
             -\frac{C\sum_{k}G^{k\ov{k}}|\vp_{1\ov{1}k}|}{\lambda_{1}} \\
           & -\frac{C_{A}\sum_{k}|\vp_{k1\ov{1}}|}{\lambda_{1}}
             +\frac{h'}{2}\sum_{i}G^{k\ov{k}}\left(|\vp_{ik}|^{2}+|\vp_{i\ov{k}}|^{2}\right)
              -C_{A}K^{\frac{3}{2}}h'\\
           & +h''G^{k\ov{k}}|\nabla_{k}|\de\vp|_{g}^{2}|^{2}
             -C_{A}K^{\frac{1}{2}}h'\sum_{i,k}\left(|\vp_{ik}|+|\vp_{i\ov{k}}|\right)
             -C_{A}K^{\frac{1}{2}} \\
           & +B^{2}e^{B\vp}G^{k\ov{k}}|\vp_{k}|^{2}
             +\left(Be^{B\vp}-C_{A}\right)\sum_{k}G^{k\ov{k}}-CBe^{B\vp}F^{\frac{1}{2}}.
\end{split}
\end{equation}
On the other hand, by the definition of $\ti{\omega}$, we have
\begin{equation}\label{the first order derivative of tildeomega}
2n\alpha\vp_{1\ov{1}k} = \nabla_{k}\ti{g}_{1\ov{1}}-e^{\vp}\vp_{k}-\alpha(e^{-\vp}\rho_{1\ov{1}})_{k},
\end{equation}
which implies
\begin{equation}\label{Lower bound equation 2}
\begin{split}
-\frac{C\sum_{k}G^{k\ov{k}}|\vp_{1\ov{1}k}|}{\lambda_{1}}
\geq {} & -\frac{C}{\lambda_{1}}\sum_{k}G^{k\ov{k}}(|\nabla_{k}\ti{g}_{1\ov{1}}|+e^{\vp}K) \\
\geq {} & -\ve\frac{G^{k\ov{k}}|\nabla_{k}\ti{g}_{1\ov{1}}|^{2}}{\lambda_{1}^{2}}
          -\frac{C_{A}}{\ve}\sum_{k}G^{k\ov{k}},
\end{split}
\end{equation}
where $\ve\in(0,1)$. Note that  $\nabla_{k}Q(x_{0})=0$. Then
\begin{equation}\label{dQ=0}
\frac{\nabla_{k}\ti{g}_{1\ov{1}}}{\lambda_{1}} = -h'\nabla_{k}|\de\vp|_{g}^{2}-Be^{B\vp}\vp_{k}.
\end{equation}
Thus combining this with (\ref{the first order derivative of tildeomega}), it follows that
\begin{equation*}
-\frac{C_{A}\sum_{k}|\vp_{1\ov{1}k}|}{\lambda_{1}}
\geq -C_{A}K^{\frac{1}{2}}h'\sum_{i,k}\left(|\vp_{ik}|+|\vp_{i\ov{k}}|\right)-C_{A}K^{\frac{1}{2}}Be^{B\vp}.
\end{equation*}
Hence, substituting the above estimates into (\ref{Lower bound equation 1}), we get
\begin{equation}\label{Lower bound equation 3}
\begin{split}
0  \geq {} & -\frac{G^{i\ov{j},k\ov{l}}\nabla_{1}\ti{g}_{i\ov{j}}\nabla_{\ov{1}}\ti{g}_{p\ov{q}}}{\lambda_{1}}
             -(1+\ve)\frac{G^{k\ov{k}}|\nabla_{k}\ti{g}_{1\ov{1}}|^{2}}{\lambda_{1}^{2}}
             +h''G^{k\ov{k}}|\nabla_{k}|\de\vp|_{g}^{2}|^{2}\\
           & +\frac{h'}{2}\sum_{i}G^{k\ov{k}}\left(|\vp_{ik}|^{2}+|\vp_{i\ov{k}}|^{2}\right)
              -C_{A}K^{\frac{1}{2}}h'\sum_{i,k}\left(|\vp_{ik}|+|\vp_{i\ov{k}}|\right) \\
           & +B^{2}e^{B\vp}G^{k\ov{k}}|\vp_{k}|^{2}+\left(Be^{B\vp}-\frac{C_{A}}{\ve}\right)\sum_{k}G^{k\ov{k}} \\
           & -C_{A}K^{\frac{3}{2}}h'-C_{A}K^{\frac{1}{2}}
             -CBe^{B\vp}F^{\frac{1}{2}}-C_{A}K^{\frac{1}{2}}Be^{B\vp}.
\end{split}
\end{equation}
By the definitions of $G^{k\ov{k}}$, it is clear that
\begin{equation*}
\sum_{k}G^{k\ov{k}} =\frac{n-1}{2F^{\frac{1}{2}}}\lambda_{1}+(n-1)G^{1\ol{1}}\geq \frac{n-1}{2F^{\frac{1}{2}}}\lambda_{1}.
\end{equation*}
Combining this with \eqref{h'} and $G^{k\ol{k}}\geq \frac{F^{\frac{1}{2}}}{n\lambda_{1}}$, it  follows that
\begin{equation*}
\begin{split}
         C_{A}K^{\frac{1}{2}}h'\sum_{i,k}\left(|\vp_{ik}|+|\vp_{i\ov{k}}|\right)
\leq {} & \frac{h'}{10}\frac{F^{\frac{1}{2}}}{n\lambda_{1}}\sum_{i,k}\left(|\vp_{ik}|^{2}+|\vp_{i\ov{k}}|^{2}\right)
          +C_{A}Kh'\frac{\lambda_{1}}{F^{\frac{1}{2}}} \\
\leq {} & \frac{h'}{10}\sum_{i}G^{k\ov{k}}\left(|\vp_{ik}|^{2}+|\vp_{i\ov{k}}|^{2}\right)+C_{A}\sum_{k}G^{k\ov{k}}.
\end{split}
\end{equation*}
By the definition of $F$ and $\lambda_{1}\gg C_{A}K$, we have
\begin{equation*}
C_{A}K^{\frac{3}{2}}h'+C_{A}K^{\frac{1}{2}}+CBe^{B\vp}F^{\frac{1}{2}}+C_{A}K^{\frac{1}{2}}Be^{B\vp}
\leq C_{A}K^{\frac{1}{2}}Be^{B\vp}
\end{equation*}
and
\begin{equation*}
C_{A}K^{\frac{1}{2}}Be^{B\vp}
\leq  \frac{n-1}{4F^{\frac{1}{2}}}\lambda_{1}Be^{B\vp}
\leq  \frac{B}{2}e^{B\vp}\sum_{k}G^{k\ov{k}}.
\end{equation*}
Substituting these estimates into (\ref{Lower bound equation 3}), we obtain (\ref{Lower bound}) immediately.
\end{proof}

\begin{proof}[Proof of Proposition \ref{Effective second order estimate}] By (\ref{dQ=0}) and \eqref{h'}, we have
\begin{equation}\label{GkkDQ=0}
\begin{split}
(1+\ve)\frac{G^{k\ov{k}}|\nabla_{k} \ti{g}_{1\ol{1}}|^{2}}{\lambda_{1}^{2}}
   = {} & (1+\ve)G^{k\ov{k}}|h'\nabla_{k}|\de\vp|^{2}_{g}+Be^{B\vp}\vp_{k}|^{2} \\
\leq {} & 2(h')^{2}G^{k\ol{k}}|\nabla_{k}|\de \vp|^{2}_{g}|^{2}+CB^{2}e^{2B\vp}G^{k\ov{k}}|\vp_{k}|^{2} \\
\leq {} & h''G^{k\ol{k}}|\nabla_{k}|\de\vp|_{g}^{2}|^{2}+CB^{2}e^{2B\vp}G^{k\ov{k}}|\vp_{k}|^{2}.
\end{split}
\end{equation}
Since $G^{i\ov{j},k\ov{l}}\nabla_{1}\ti{g}_{i\ov{j}}\nabla_{\ov{1}}\ti{g}_{p\ov{q}}\leq0$,  by (\ref{Lower bound}) (taking $\ve=\frac{1}{2}$), we get
\begin{equation}\label{Inequality for i>1}
\begin{split}
0 \geq {} & \frac{h'}{8}\sum_{i}G^{k\ov{k}}\left(|\vp_{ik}|^{2}+|\vp_{i\ov{k}}|^{2}\right)-C_{A}KB^{2}e^{2B\vp}\sum_{k}G^{k\ov{k}} \\
  \geq {} & \frac{1}{16K}\sum_{k}G^{k\ol{k}}|\vp_{k\ol{k}}|^{2}-C_{A}B^{2}e^{2B\vp}K\sum_{k}G^{k\ov{k}}.
\end{split}
\end{equation}
On other hand hand, for the $\sigma_2$ function, we have (cf. \cite[Theorem 1]{LT94})),
$$G^{i\ol{i}}\geq \frac{1}{C}\sum_{k}G^{k\ol{k}}, ~\forall ~i\geq 2.$$
Substituting this into (\ref{Inequality for i>1}), it follows that
\begin{equation*}
0 \geq \frac{1}{16K}G^{i\ov{i}}|\vp_{i\ov{i}}|^{2}-C_{A}B^{2}e^{2B\vp}G^{i\ov{i}}.
\end{equation*}
Hence, we obtain
\begin{equation}\label{bounded of lambdak}
\lambda_{i} \leq C_{A}|\vp_{i\ov{i}}|+C_{A} \leq C_{A,B}K,  ~\forall i\geq 2,
\end{equation}
where $C_{A,B}$ is a uniform constant depending on $A$ and $B$.

By \eqref{Lower bound} and \eqref{GkkDQ=0} for $k=1$, we have
\begin{equation}\label{Lower bound 1}
\begin{split}
0
  \geq {} & -\frac{G^{i\ov{j},k\ov{l}}\nabla_{1}\ti{g}_{i\ov{j}}\nabla_{\ov{1}}\ti{g}_{k\ov{l}}}{\lambda_{1}}
            -(1+\ve)\sum_{k\geq2}\frac{G^{k\ov{k}}|\nabla_{k}\ti{g}_{1\ov{1}}|^{2}}{\lambda_{1}^{2}} \\
          & +\frac{h'}{8}\sum_{i}G^{k\ov{k}}\left(|\vp_{ik}|^{2}+|\vp_{i\ov{k}}|^{2}\right)
            +\sum_{k\geq 2}h''G^{k\ov{k}}|\nabla_{k}|\de\vp|_{g}^{2}|^{2} \\
          & +B^{2}e^{B\vp}G^{k\ov{k}}|\vp_{k}|^{2}+\left(\frac{B}{2}e^{B\vp}-\frac{C_{A}}{\ve}\right)\sum_{k}G^{k\ov{k}}
            -CKB^2e^{2B\vp}G^{1\ol{1}}.
\end{split}
\end{equation}
We need to deal with bad third order term
\begin{equation}\label{Bad third order term}
\begin{split}
& (1+\ve)\sum_{k\geq2}\frac{G^{k\ov{k}}|\nabla_{k}\ti{g}_{1\ov{1}}|^{2}}{\lambda_{1}^{2}} \\
= {} & (1-2\ve)\sum_{k\geq2}\frac{G^{k\ov{k}}|\nabla_{k}\ti{g}_{1\ov{1}}|^{2}}{\lambda_{1}^{2}}
  +3\ve\sum_{k\geq2}\frac{G^{k\ov{k}}|\nabla_{k}\ti{g}_{1\ov{1}}|^{2}}{\lambda_{1}^{2}}
\end{split}
\end{equation}
For the first term of (\ref{Bad third order term}),  we use  \eqref{bounded of lambdak} to see that
\begin{equation}\label{third 1}
\begin{split}
        & (1-2\ve)\sum_{k\geq2}\frac{G^{k\ov{k}}|\nabla_{k}\ti{g}_{1\ov{1}}|^{2}}{\lambda_{1}^{2}} \\
\leq {} & (1-\ve)\sum_{k\geq2}\frac{G^{k\ov{k}}|\nabla_{1}\ti{g}_{k\ov{1}}|^{2}}{\lambda_{1}^{2}}+\frac{C}{\ve}\sum_{k}G^{k\ov{k}} \\
\leq {} & (1-\ve)\sum_{k\geq2}\frac{\lambda_{1}+C_{A,B}K}{\lambda_{1}^{2}}|\nabla_{1}\ti{g}_{k\ov{1}}|^{2}+\frac{C}{\ve}\sum_{k}G^{k\ov{k}} \\
\leq {} & -\sum_{k\geq2}\frac{G^{1\ov{k},k\ov{1}}|\nabla_{1}\ti{g}_{k\ov{1}}|^{2}}{\lambda_{1}}+\frac{C}{\ve}\sum_{k}G^{k\ov{k}},
\end{split}
\end{equation}
as long as $\lambda_{1}\geq\frac{C_{A,B}K}{\ve}$. For the second term of (\ref{Bad third order term}),  we use (\ref{dQ=0}) to get
\begin{equation}\label{third 2}
\begin{split}
        &  3\ve\sum_{k\geq2}\frac{G^{k\ov{k}}|\nabla_{k}\ti{g}_{1\ov{1}}|^{2}}{\lambda_{1}^{2}}\\
\leq {} &  6\ve(h')^{2}\sum_{k\geq 2}G^{k\ov{k}}|\nabla_{k}|\de\vp|_{g}^{2}|^{2}+6\ve B^{2}e^{2B\vp}\sum_{k\geq 2}G^{k\ol{k}}|\vp_{k}|^{2}.
\end{split}
\end{equation}
Thus substituting  \eqref{third 1},  \eqref{third 2} and \eqref{Bad third order term} into  \eqref{Lower bound 1},
we obtain
\begin{equation*}
\begin{split}
0 \geq {} & \frac{h'}{8}\sum_{i}G^{k\ov{k}}\left(|\vp_{ik}|^{2}+|\vp_{i\ov{k}}|^{2}\right)
            +\left(B^{2}e^{B\vp}-6\ve B^{2}e^{2B\vp}\right)G^{k\ov{k}}|\vp_{k}|^{2} \\
          & +\left(\frac{B}{2}e^{B\vp}-\frac{C_{A}}{\ve}\right)\sum_{k}G^{k\ov{k}}-C_{A}KB^2e^{2B\vp}G^{1\ol{1}}.
\end{split}
\end{equation*}
Choose $B=12C_{A}+1$ and $\ve=\frac{e^{-B\vp({x_{0}})}}{6}$, so that
\begin{equation*}
B^{2}e^{B\vp}-6\ve B^{2}e^{2B\vp}=0 \text{~and~} \left(\frac{B}{2}e^{B\vp}-\frac{C_{A}}{\ve}\right)\sum_{k}G^{k\ov{k}} \geq 0.
\end{equation*}
Hence,
\begin{equation*}
\frac{h'}{8}G^{1\ov{1}}|\vp_{1\ov{1}}|^{2}\leq C_{A}KB^2e^{2B\vp}G^{1\ol{1}},
\end{equation*}
which implies $\lambda_{1}\leq C_{A}K$. We complete the proof.
\end{proof}

As a corollary of Proposition \ref{Effective second order estimate}, we obtain the following estimate.
\begin{theorem}\label{A priori estimates}
Let $\alpha<0$ and $\vp$ be a smooth solution of (\ref{Fu-Yau equation t}) satisfying (\ref{Elliptic condition t}) and (\ref{L^n normalization condition}). Then there exists a uniform constant $A_{0}$ such that if $A\leq A_{0}$, then we have the following estimate
\begin{equation*}
\|\vp\|_{C^{k}} \leq C_{A,k},
\end{equation*}
where $C_{A,k}$ depends only on $A$, $k$, $\alpha$, $\rho$, $\mu$ and $(M,\omega)$.
\end{theorem}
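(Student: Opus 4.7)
The plan is to combine the estimates of Sections 5 and 6 with a blow-up argument to produce a uniform $C^2$ bound, and then to bootstrap via Schauder theory. First I would observe that (\ref{Fu-Yau equation t}) has the same structural form as (\ref{Fu-Yau equation}) once one replaces $\rho$ by $t\rho$ and $\mu$ by the smooth function
\begin{equation*}
\mu_t := t\mu + \frac{n\alpha(t-1)\,n!\,\ddbar h\wedge\ddbar h\wedge\omega^{n-2}}{\omega^n},
\end{equation*}
and since $h$ is the fixed function furnished by Lemma \ref{Existence lemma}, the new coefficients remain uniformly bounded for $t\in[0,1]$. Hence Propositions \ref{Stronger zero order estimate} and \ref{Effective second order estimate} apply to every $\vp_t$ and yield
\begin{equation*}
\frac{A}{M_0}\le e^{-\vp_t}\le M_0A \qquad \text{and}\qquad \sup_M|\ddbar\vp_t|_g\le C_A\sup_M|\de\vp_t|_g^2+C_A.
\end{equation*}

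The key remaining step is a uniform gradient bound, which I would prove by contradiction via rescaling. Suppose that no such bound holds: then there are $t_i\in[0,1]$ and smooth solutions $\vp_i$ with $K_i:=\sup_M|\de\vp_i|_g^2+1\to\infty$, attained at points $x_i\in M$. After a subsequence, $x_i\to x_\infty$ and $t_i\to t_\infty$. In a holomorphic chart of fixed radius $r$ about $x_i$ with $g_{i\ov{j}}(x_i)=\delta_{ij}$, set $\hat\vp_i(w):=\vp_i(w/\sqrt{K_i})$ on the ball of radius $r\sqrt{K_i}$. Then, measured in the Euclidean metric on the $w$-chart, $|\de\hat\vp_i|(0)\to 1$ and $|\de\hat\vp_i|\le 1+o(1)$, while the displayed $C^2$ inequality gives
\begin{equation*}
|\ddbar\hat\vp_i|_{\mathrm{Eucl}}=K_i^{-1}|\ddbar\vp_i|_g \le \frac{C_A\sup_M|\de\vp_i|_g^2+C_A}{K_i}\le 2C_A.
\end{equation*}
By Arzel\`a--Ascoli we extract a $C^{1,\beta}_{\mathrm{loc}}$ limit $\hat\vp_\infty\colon\mathbb{C}^n\to\mathbb{R}$ with $|\de\hat\vp_\infty|_{\mathrm{Eucl}}(0)=1$ and globally bounded complex Hessian.

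The hard part is to show that $\hat\vp_\infty$ must in fact be constant, yielding the desired contradiction. Under the rescaling $w=\sqrt{K_i}z$, the pulled-back background metric is Euclidean up to $O(K_i^{-1/2})$, the $C^0$-bound keeps $e^{\pm\vp_i}$ in a fixed interval, and every lower-order piece of $f$ in (\ref{Definition of f}) involving $\de\omega$, $\dbar\omega$, $\de\rho$, $\dbar\rho$ and $\mu_{t_i}$ picks up an extra factor of $K_i^{-1/2}$ or $K_i^{-1}$ and hence drops out in the limit. One is left with a translation-invariant $\sigma_2$-type equation on $\mathbb{C}^n$ of the form
\begin{equation*}
\sigma_2\bigl(\oeuc+2n\alpha\ddbar\hat\vp_\infty\bigr)=a+b\,|\de\hat\vp_\infty|_{\mathrm{Eucl}}^2, \qquad \oeuc+2n\alpha\ddbar\hat\vp_\infty\in\Gamma_2,
\end{equation*}
with $a\ge 0$ and $b>0$ (the sign of $b$ reflects the fact that $\alpha<0$, which is precisely why the argument is feasible in this case). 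Establishing a Liouville-type rigidity for this entire $\sigma_2$-type equation---forcing any $C^{1,\beta}_{\mathrm{loc}}$ solution with bounded gradient and bounded complex Hessian to be constant---is the main obstacle; the natural strategy is to test the equation against suitable cutoffs and exploit the $\Gamma_2$-positivity of $\oeuc+2n\alpha\ddbar\hat\vp_\infty$ in the spirit of the standard Liouville theorems for Hessian equations, eventually concluding $\de\hat\vp_\infty\equiv 0$.

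Once $\sup_M|\de\vp_t|_g$, and therefore $\sup_M|\ddbar\vp_t|_g$ by Proposition \ref{Effective second order estimate}, are bounded uniformly in $t$, the Hermitian $C^{2,\beta}$ estimate \cite[Theorem 1.1]{TWWY15} applied to the $\sigma_2$-type equation (\ref{New Hessian type of Fu-Yau equation}) gives $\|\vp_t\|_{C^{2,\beta}}\le C_{A,\beta}$ for some $\beta\in(0,1)$. Differentiating (\ref{Fu-Yau equation t}) and iterating linear Schauder theory then yields $\|\vp_t\|_{C^k}\le C_{A,k}$ for every $k\ge 0$, exactly as in the closedness step of Section 4.
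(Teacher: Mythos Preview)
Your overall strategy matches the paper's exactly: replace $(\rho,\mu)$ by $(t\rho,\mu_t)$ so that Propositions \ref{Stronger zero order estimate} and \ref{Effective second order estimate} apply uniformly in $t$, run a blow-up argument to convert the $C^2$-versus-$C^1$ inequality into a genuine gradient bound, then apply the $C^{2,\beta}$ estimate of \cite{TWWY15} and bootstrap. This is precisely what the paper does, citing \cite{DK12,Sze15} for the blow-up step.

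The gap is in your identification of the limiting equation. Under the rescaling $\hat\vp_i(w)=\vp_i(x_i+w/\sqrt{K_i})$, the coefficient matrix of $\ti\omega$ relative to the Euclidean reference in $w$-coordinates is
\begin{equation*}
K_i^{-1}e^{\vp_i}g_{j\ov k}+K_i^{-1}\alpha e^{-\vp_i}\rho_{j\ov k}+2n\alpha(\hat\vp_i)_{w^j\ov{w}^k}.
\end{equation*}
Since Proposition \ref{Stronger zero order estimate} bounds $e^{\pm\vp_i}$ by constants depending only on $A$ while $K_i\to\infty$, the first two terms vanish in the limit; the $\oeuc$ you wrote does not survive. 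Similarly, $\sigma_2$ is homogeneous of degree two, so the rescaled right-hand side is $K_i^{-2}F$, whose dominant piece $2n(n-1)|\alpha|e^{\vp_i}|\de\vp_i|_g^2/K_i^2=O(K_i^{-1})$ also tends to zero. The correct limit is therefore the homogeneous problem
\begin{equation*}
\sigma_2\bigl(2n\alpha\,\ddbar\hat\vp_\infty\bigr)=0,\qquad 2n\alpha\,\ddbar\hat\vp_\infty\in\overline{\Gamma_2},\qquad |\de\hat\vp_\infty|_{\mathrm{Eucl}}\le 1,\qquad |\de\hat\vp_\infty|_{\mathrm{Eucl}}(0)=1.
\end{equation*}
Because $\alpha<0$, this says $-\hat\vp_\infty$ is $2$-subharmonic on $\mathbb{C}^n$ with vanishing $\sigma_2$ and bounded gradient; the Liouville theorem of Dinew--Ko{\l}odziej \cite{DK12} (cf.\ \cite{Sze15}) forces $-\hat\vp_\infty$ to be constant, contradicting $|\de\hat\vp_\infty|(0)=1$. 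So no new Liouville theorem for an equation with gradient-dependent right-hand side is required: the ``main obstacle'' you flagged disappears once the limit is computed correctly, and this is exactly what the paper has in mind when it simply cites \cite{DK12,Sze15}.
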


\begin{proof}
By Proposition \ref{Effective second order estimate} while $\rho$ and $\mu$ are replaced by
\begin{equation*}
t\rho \text{~and~} \frac{n\alpha(t-1)n!\ddbar h\wedge\ddbar h\wedge\omega^{n-2}}{\omega^{n}}+t\mu,
\end{equation*}
we use the blow-up argument to derive (cf. \cite{DK12,Sze15}),
\begin{equation*}
\sup_{M}|\de\db\vp|_{g} \leq C_{A}.
\end{equation*}
By  the $C^{2,\alpha}$-estimate (cf. \cite[Theorem 1.1]{TWWY15}),
it follows
$$\|\varphi\|_{C^{2,\alpha}} \leq C_{A}'.$$
 Hence, by the bootstrapping argument, we complete the proof.
\end{proof}

\section{Proof of Theorem \ref{Uniqueness theorem alpha negative}}
In this section, we give the proof of Theorem \ref{Uniqueness theorem alpha negative}. First we prove the uniqueness of solutions of (\ref{Fu-Yau equation t}) in case of $\alpha<0$ when $t=0$.

\begin{lemma}\label{Uniqueness t=0}
When $t=0$, (\ref{Fu-Yau equation t}) has a unique solution satisfying (\ref{Elliptic condition t}) and (\ref{L^n normalization condition})
\begin{equation*}
\vp_{0} = h+\ln\|e^{-h}\|_{L^{n}}-\ln A,
\end{equation*}
where $h$ is the function in Lemma \ref{Existence lemma}.
\end{lemma}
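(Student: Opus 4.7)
The plan has two parts: a direct verification of existence, and a Fredholm-type argument for uniqueness that parallels the openness argument in the proof of Theorem \ref{Existence and Uniqueness Theorem}.

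\textbf{Existence.} Set $\vp_{0}=h+c_{0}$ with $c_{0}=\ln\|e^{-h}\|_{L^{n}}-\ln A$. Since $\ddbar \vp_{0}=\ddbar h$ and $\ddbar(e^{h}\omega)\wedge\omega^{n-2}=0$ by Lemma \ref{Existence lemma}, the term $\ddbar(e^{\vp_{0}}\omega)\wedge\omega^{n-2}=e^{c_{0}}\ddbar(e^{h}\omega)\wedge\omega^{n-2}$ vanishes, and the quadratic term $n\alpha\ddbar\vp_{0}\wedge\ddbar\vp_{0}\wedge\omega^{n-2}$ cancels the source $n\alpha\ddbar h\wedge\ddbar h\wedge\omega^{n-2}$. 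The normalization $\|e^{-\vp_{0}}\|_{L^{n}}=A$ is immediate from the choice of $c_{0}$, and for sufficiently small $A$ the ellipticity $(\ref{Elliptic condition t})$ holds because $e^{\vp_{0}}\omega \sim A^{-1}\|e^{-h}\|_{L^{n}}e^{h}\omega$ is very large compared with $2n\alpha\ddbar h$.

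\textbf{Uniqueness.} Suppose $\vp$ is another solution with the same $L^{n}$-normalization. Set $w=\vp-\vp_{0}$ and write $e^{\vp}-e^{\vp_{0}}=gw$ with the strictly positive function $g=\int_{0}^{1}e^{\vp_{0}+sw}\,ds$. Subtracting the equation for $\vp_{0}$ from the equation for $\vp$ yields a linear elliptic equation $Lw=0$, where
\[
(Lw)\omega^{n} = \ddbar(gw\omega)\wedge\omega^{n-2}+n\alpha(\ddbar\vp+\ddbar\vp_{0})\wedge\ddbar w\wedge\omega^{n-2}.
\]
Integrating by parts and using $\ddbar\omega^{n-2}=0$ exactly as in the derivation of (\ref{Definition of L^*}), the $L^{2}$-adjoint $L^{*}$ has the form
\[
(L^{*}v)\omega^{n} = \ddbar v\wedge\Omega\wedge\omega^{n-2}+\mn\,\de v\wedge\Omega\wedge\dbar\omega^{n-2}-\dbar v\wedge\Omega\wedge\mn\,\de\omega^{n-2},
\]
with $\Omega=g\omega+n\alpha(\ddbar\vp+\ddbar\vp_{0})$, and carries no zeroth-order term. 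By the strong maximum principle $\ker L^{*}=\mathbb{R}$, and the Fredholm index gives $\dim\ker L=1$. A generator $u_{0}\in\ker L$ is positive: for $A$ small, the second-order coefficient $\Omega$ is positive-definite and large (since $g\sim e^{\vp_{0}}\gg 1$ while $|\ddbar\vp|,|\ddbar\vp_{0}|$ are controlled by Propositions \ref{Stronger zero order estimate}--\ref{Effective second order estimate}), so the zeroth-order contribution from $\ddbar g$ in $L$ is negligible, and the strong maximum principle yields $u_{0}>0$.

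Finally, the common normalization $\int_{M}e^{-\vp}\omega^{n}=\int_{M}e^{-\vp_{0}}\omega^{n}=A^{n}$ gives $\int_{M}\ti{g}\,w\,\omega^{n}=0$ with $\ti{g}=\int_{0}^{1}e^{-\vp_{0}-sw}\,ds>0$. Since $w=cu_{0}$ for some $c\in\mathbb{R}$ and both $\ti{g},u_{0}>0$, this forces $c=0$, hence $w\equiv 0$ and $\vp=\vp_{0}$.

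\textbf{Expected main obstacle.} The delicate point is establishing the positivity of the generator $u_{0}\in\ker L$. Unlike $L^{*}$, the operator $L$ has zeroth-order terms arising from $\ddbar g$ which are not sign-definite in general, so the strong maximum principle does not apply off-the-shelf. The proposal is to absorb them by exploiting the smallness of $A$: because $g$ is essentially $e^{\vp_{0}}$ up to a factor close to $1$ and $\ddbar(e^{\vp_{0}}\omega)\wedge\omega^{n-2}=0$, the zeroth-order coefficient of $L$ is of lower order than the dominant positive part of $\Omega$, and a standard perturbative strong maximum principle then gives a definite sign for $u_{0}$.
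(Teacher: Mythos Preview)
Your approach is genuinely different from the paper's. The paper does not linearize at all: writing $\tilde{\vp}=\vp-h$ and $\omega_{h}=e^{h}\omega$, it derives by an integration-by-parts computation (parallel to Lemma~\ref{Zero order estimate lemma}) the inequality
\[
\int_{M}e^{2\tilde{\vp}}\sqrt{-1}\de\tilde{\vp}\wedge\dbar\tilde{\vp}\wedge\omega_{h}\wedge\omega^{n-2}
\;\leq\; C\int_{M}e^{\tilde{\vp}}\sqrt{-1}\de\tilde{\vp}\wedge\dbar\tilde{\vp}\wedge\omega_{h}\wedge\omega^{n-2},
\]
and then uses only the $C^{0}$-estimate $e^{\tilde{\vp}}\geq 1/(CA)$ from Proposition~\ref{Stronger zero order estimate} to force $\de\tilde{\vp}=0$. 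This is more elementary than your route: it needs no Fredholm theory and no second-order a~priori estimate on the unknown solution $\vp$.

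Your Fredholm scheme is viable, but the step you flag as the obstacle is not resolved by the argument you propose. The claim that ``the zeroth-order contribution from $\ddbar g$ is negligible because $\Omega$ is large'' does not yield a maximum principle: what matters for the strong maximum principle is the \emph{sign} of the zeroth-order coefficient, not its size relative to the principal part, and your perturbative estimate only shows the ratio is bounded by a constant $C_{A}$, not that it is small. The correct way to obtain $u_{0}>0$ is the Krein--Rutman/principal-eigenvalue argument implicit in the paper's openness proof: since $L^{*}$ has no zeroth-order term and $L^{*}1=0$ with $1>0$, the principal eigenvalue of $L^{*}$ is $0$; as $\lambda_{1}(L)=\lambda_{1}(L^{*})$, the kernel of $L$ is spanned by its positive principal eigenfunction. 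With that fix (and the $C^{2}$ bounds of Theorem~\ref{A priori estimates} to secure ellipticity of $L$, which you correctly invoke), the rest of your argument---including the normalization step, modulo the typo $e^{-\vp}\to e^{-n\vp}$---goes through.
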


\begin{proof}
It suffices to prove that $\vp-h$ is constant. For convenience, we define
\begin{equation*}
\ti{\vp} = \vp-h \text{~and~} \omega_{h} = e^{h}\omega.
\end{equation*}
Then, when $t=0$, (\ref{Fu-Yau equation t}) and (\ref{Elliptic condition t}) can be expressed as
\begin{equation*}
\begin{split}
\ddbar(e^{\ti{\vp}}\omega_{h})\wedge\omega^{n-2} & +n\alpha\ddbar\ti{\vp}\wedge\ddbar\ti{\vp}\wedge\omega^{n-2} \\
& +2n\alpha\ddbar\ti{\vp}\wedge\ddbar h\wedge\omega^{n-2} = 0
\end{split}
\end{equation*}
and
\begin{equation*}
e^{\ti{\vp}}\omega_{h}+2n\alpha\ddbar(\ti{\vp}+h) \in \Gamma_{2}(M).
\end{equation*}

By the similar calculation of Lemma \ref{Zero order estimate lemma}, we have
\begin{equation}\label{Uniqueness t=0 equation 1}
\begin{split}
        & \int_{M}e^{2\ti{\vp}}\sqrt{-1}\de\ti{\vp}\wedge\dbar\ti{\vp}\wedge\omega_{h}\wedge\omega^{n-2} \\
\geq {} & -2n\alpha\int_{M}\sqrt{-1}\de e^{\ti{\vp}}\wedge\dbar\ti{\vp}\wedge\ddbar(\ti{\vp}+h)\wedge\omega^{n-2} \\
  =  {} & -2n\alpha\int_{M}e^{\ti{\vp}}\wedge\dbar\ti{\vp}\wedge\ddbar(\ti{\vp}+h)\wedge\sqrt{-1}\de\omega^{n-2} \\
        & +2n\alpha\int_{M}e^{\ti{\vp}}\wedge\ddbar\ti{\vp}\wedge\ddbar(\ti{\vp}+h)\wedge\omega^{n-2} \\
  =  {} & -2n\alpha\int_{M}e^{\ti{\vp}}\ddbar(\ti{\vp}+h)\wedge\ddbar\omega^{n-2} \\
        & -2\int_{M}e^{\ti{\vp}}\ddbar(e^{\ti{\vp}}\omega_{h})\wedge\omega^{n-2} \\
        & -2n\alpha\int_{M}e^{\ti{\vp}}\ddbar\ti{\vp}\wedge\ddbar h\wedge\omega^{n-2}.
\end{split}
\end{equation}
Since $\ddbar\omega^{n-2}=0$, the first term of (\ref{Uniqueness t=0 equation 1}) vanishes. For the second term, we compute
\begin{equation*}
\begin{split}
     & -2\int_{M}e^{\ti{\vp}}\ddbar(e^{\ti{\vp}}\omega_{h})\wedge\omega^{n-2} \\
= {} & 2\int_{M}e^{\ti{\vp}}\sqrt{-1}\de\ti{\vp}\wedge\dbar(e^{\ti{\vp}}\omega_{h})\wedge\omega^{n-2}
       -2\int_{M}e^{\ti{\vp}}\dbar(e^{\ti{\vp}}\omega_{h})\wedge\sqrt{-1}\de\omega^{n-2} \\
= {} & 2\int_{M}(e^{2\ti{\vp}}\sqrt{-1}\de\ti{\vp}\wedge\dbar\ti{\vp}\wedge\omega_{h}
       +\frac{1}{2}\sqrt{-1}\de e^{2\ti{\vp}}\wedge\dbar\omega_{h})\wedge\omega^{n-2} \\
     & -\int_{M}\dbar e^{2\ti{\vp}}\wedge\omega_{h}\wedge\sqrt{-1}\de\omega^{n-2}
       -2\int_{M}e^{2\ti{\vp}}\dbar\omega_{h}\wedge\sqrt{-1}\de\omega^{n-2} \\
= {} &\int_{M}e^{2\ti{\vp}}\left(-\partial(\ol{\partial}\omega_{h}\wedge\omega^{n-1})
       +\ol{\partial}(\omega_{h}\wedge{\partial}\omega^{n-1})-2\ol{\partial}\omega_{h}\wedge\partial\omega^{n-1}\right)\\
     &+2\int_{M}e^{2\ti{\vp}}\sqrt{-1}\de\ti{\vp}\wedge\dbar\ti{\vp}\wedge\omega_{h}\\
= {} & 2\int_{M}e^{2\ti{\vp}}\sqrt{-1}\de\ti{\vp}\wedge\dbar\ti{\vp}\wedge\omega_{h}\wedge\omega^{n-2},
\end{split}
\end{equation*}
where we used the relations in the last equality,
$$\ddbar\omega^{n-2}=0~{\rm and}~ \ddbar(e^{h}\omega)\wedge\omega^{n-2}=0.$$  For the third term of (\ref{Uniqueness t=0 equation 1}), we see that
\begin{equation*}
\begin{split}
     & -2n\alpha\int_{M}e^{\ti{\vp}}\ddbar\ti{\vp}\wedge\ddbar h\wedge\omega^{n-2} \\
= {} & 2n\alpha\int_{M}e^{\ti{\vp}}\sqrt{-1}\de\ti{\vp}\wedge\dbar\ti{\vp}\wedge\ddbar h\wedge\omega^{n-2} \\
     & -2n\alpha\int_{M}\dbar e^{\ti{\vp}}\wedge\ddbar h\wedge\sqrt{-1}\de\omega^{n-2} \\
= {} & 2n\alpha\int_{M}e^{\ti{\vp}}\sqrt{-1}\de\ti{\vp}\wedge\dbar\ti{\vp}\wedge\ddbar h\wedge\omega^{n-2}.
\end{split}
\end{equation*}
 Substituting the above estimates into (\ref{Uniqueness t=0 equation 1}), we get the inequality
\begin{equation}\label{Uniqueness t=0 equation 2}
\begin{split}
        & \int_{M}e^{2\ti{\vp}}\sqrt{-1}\de\ti{\vp}\wedge\dbar\ti{\vp}\wedge\omega_{h}\wedge\omega^{n-2} \\
\leq {} & -2n\alpha\int_{M}e^{\ti{\vp}}\sqrt{-1}\de\ti{\vp}\wedge\dbar\ti{\vp}\wedge\ddbar h\wedge\omega^{n-2} \\
\leq {} & C\int_{M}e^{\ti{\vp}}\sqrt{-1}\de\ti{\vp}\wedge\dbar\ti{\vp}\wedge\omega_{h}\wedge\omega^{n-2}.
\end{split}
\end{equation}
On the other hand, by Proposition \ref{Stronger zero order estimate},  we have
\begin{equation}\label{tilde-phi}
e^{\ti{\vp}} = e^{\vp-h} \geq \frac{1}{CA}.
\end{equation}
Combining this with (\ref{Uniqueness t=0 equation 2}) and $A\ll1$, we prove
\begin{equation*}
\int_{M}e^{2\ti{\vp}}\sqrt{-1}\de\ti{\vp}\wedge\dbar\ti{\vp}\wedge\omega_{h}\wedge\omega^{n-2} = 0,
\end{equation*}
which implies $\de\ti{\vp}=0$. Therefore, $\ti{\vp}$ is constant.
\end{proof}

\begin{remark}\label{unique-2-h} Lemma \ref{Uniqueness t=0} is also   true from   the above  proof  if (\ref{L^n normalization condition}) is replaced by the condition (\ref{two-condition}),  and the solution of  (\ref{Fu-Yau equation t})   when $t=0$ is given by $\vp_{0} = h+\ln\|e^{-h}\|_{L^{1}}-\ln A$.    The reason is  that (\ref{tilde-phi})  holds by Proposition \ref{Zero order estimate}. In this case, the lemma holds for any $\alpha\neq0$.

\end{remark}

Now we are in a position to prove Theorem \ref{Uniqueness theorem alpha negative}.

\begin{proof}[Proof of Theorem \ref{Uniqueness theorem alpha negative}]
The existence is proved in Theorem \ref{Existence and Uniqueness Theorem}. It suffices to prove the uniqueness. Assume that we have two solutions $\vp$ and $\vp'$.  Then as in  \cite[Section 5.3]{CHZ18},    we use the continuity method to solve (\ref{Fu-Yau equation t}) from $t=1$ to $0$.   By Theorem \ref{A priori estimates},    there are two families of solutions $\{\vp_{t}\}$ and $\{\vp_{t}'\}$ of (\ref{Fu-Yau equation t}) satisfying (\ref{Elliptic condition t}) and $L^{n}$-normalization conditions. We also have $\vp_{1}=\vp$ and $\vp_{1}'=\vp'$. By Lemma \ref{Uniqueness t=0}, we see that
\begin{equation*}
\vp_{0} = \vp_{0}' = -\ln A.
\end{equation*}
Let
\begin{equation*}
J = \{ t\in [0,1] ~|~ \vp_{t}=\vp_{t}'\}.
\end{equation*}
Clearly, $J$ is closed. Applying the implicit function theorem, we see that $J$ is open. Then $J=[0,1]$ and so
\begin{equation*}
\vp = \vp_{1} = \vp_{1}' = \vp'.
\end{equation*}
This completes the proof of uniqueness.
\end{proof}

\section{Proof of Theorem \ref{Monotonicity theorem}}
In this section, we give the proof of Theorem \ref{Monotonicity theorem}. When $A$ is sufficiently small, Theorem \ref{Uniqueness theorem alpha negative} implies that there exists a unique solution of (\ref{Fu-Yau equation t}) satisfying (\ref{Elliptic condition t}) and (\ref{L^n normalization condition}). For convenience, we denote it by $\vp_{t,A}$.

\begin{lemma}\label{Smooth lemma}
$\vp_{t,A}$ is smooth with respect to $t$ and $A$.
\end{lemma}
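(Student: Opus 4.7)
The plan is to apply the implicit function theorem in Hölder spaces to an augmented map that encodes both the continuity equation (\ref{Fu-Yau equation t}) and the normalization (\ref{L^n normalization condition}), treating $(t,A)$ as parameters. The heavy lifting has essentially been done already in the openness step of the proof of Theorem \ref{Existence and Uniqueness Theorem}; what remains is to package it correctly so that smooth dependence on $A$ (and not just $t$) drops out.

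Concretely, I would fix $\beta\in(0,1)$ and define
\begin{equation*}
\Psi : C^{2,\beta}(M)\times [0,1]\times (0,A_{0}] \longrightarrow B_{2}\times \mathbb{R},\qquad
\Psi(\vp,t,A) = \left(\Phi(\vp,t),\ \int_{M} e^{-n\vp}\omega^{n} - A^{n}\right),
\end{equation*}
where $\Phi$ and $B_{2}$ are as in Section~\ref{Proof of existence theorem}. By construction $\Psi(\vp_{t,A},t,A)=0$ for every admissible $(t,A)$, and $\Psi$ is real-analytic in all three arguments (polynomial in $\vp$ up to the exponential factors $e^{\pm \vp}$ which are entire in $\vp$). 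The Fréchet derivative of $\Psi$ with respect to $\vp$ at a solution $\vp_{t,A}$ is
\begin{equation*}
D_{\vp}\Psi(u) = \left(Lu,\ -n\int_{M} u\, e^{-n\vp_{t,A}}\omega^{n}\right),
\end{equation*}
where $L$ is the linearized operator analyzed in the openness step.

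The crux is to show $D_{\vp}\Psi$ is a Banach space isomorphism onto $B_{2}\times\mathbb{R}$. From the openness argument we know that $L:C^{2,\beta}(M)\to B_{2}$ is Fredholm of index zero, that $\ker L = \mathbb{R}\cdot u_{0}$ for some strictly positive $u_{0}\in C^{\infty}(M)$, and that $\textrm{Im}\, L = B_{2}$. Given any target $(w,c)\in B_{2}\times\mathbb{R}$, pick $\ti{u}\in C^{2,\beta}(M)$ with $L\ti{u}=w$; then $u=\ti{u}+s u_{0}$ also solves $Lu=w$ for every $s\in\mathbb{R}$, and the second equation becomes $-n\int_{M}(\ti{u}+s u_{0})e^{-n\vp_{t,A}}\omega^{n}=c$. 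Since $\int_{M}u_{0}e^{-n\vp_{t,A}}\omega^{n}>0$ (both factors being positive), this equation has a unique solution $s$, giving surjectivity. Injectivity is the same computation run in reverse: $Lu=0$ forces $u=su_{0}$, and the integral constraint then forces $s=0$. The implicit function theorem thus yields a smooth map $(t,A)\mapsto \vp_{t,A}\in C^{2,\beta}(M)$ locally near every $(t_{0},A_{0})$, and by uniqueness in Theorem~\ref{Uniqueness theorem alpha negative} this local map must agree with the globally defined solution.

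Finally, to upgrade to smoothness jointly in $(x,t,A)$, I would rerun the same argument in $C^{k,\beta}(M)$ for every $k\ge 2$: the linearization $L$ is an elliptic isomorphism on the corresponding quotient spaces by Schauder theory, so the implicit function theorem gives $(t,A)\mapsto \vp_{t,A}\in C^{k,\beta}(M)$ smoothly for each $k$, and differentiating the equation in $(t,A)$ together with standard elliptic regularity in $x$ then gives joint $C^{\infty}$-smoothness. The main obstacle is really just the surjectivity of $D_{\vp}\Psi$ onto the last $\mathbb{R}$-factor; once the positivity of $u_{0}$ is invoked to secure $\int_{M}u_{0}e^{-n\vp_{t,A}}\omega^{n}>0$, everything else is a routine application of the implicit function theorem and elliptic bootstrapping.
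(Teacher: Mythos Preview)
Your proposal is correct and follows essentially the same approach as the paper: both define an augmented map pairing the PDE with the $L^{n}$-normalization, compute its linearization as $(Lu,\, -n\int_{M} u\,e^{-n\hat{\vp}}\omega^{n})$, invoke $\ker L=\mathbb{R}u_{0}$ with $u_{0}>0$ to establish invertibility, and then apply the implicit function theorem together with the uniqueness from Theorem~\ref{Uniqueness theorem alpha negative}. Your write-up is in fact more explicit than the paper's, which simply asserts ``similarly, we see that $(D_{\vp}\Phi)_{(\hat{\vp},\hat{t},\hat{A})}$ is invertible'' without spelling out the surjectivity/injectivity argument you give.
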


\begin{proof}
For $\beta\in(0,1)$, we define the sets
\begin{equation*}
\begin{split}
\ti{B} & = C^{2,\beta}(M)\times[0,1]\times[0,1],\\[2mm]
\ti{B}_{1} & = \{ (\vp,t,A)\in \ti{B} ~|~ \text{$\vp$ satisfies (\ref{Elliptic condition t})} \}, \\
\ti{B}_{2} & = \textbf{R}\times\{ u\in C^{\beta}(M) ~|~ \int_{M}u\omega^{n}=0 \},
\end{split}
\end{equation*}
and map $\Phi: \ti{B}_{1}\rightarrow \ti{B}_{2}$
\begin{equation*}
\Phi(\vp,t,A) = \left(\Phi_{1}(\vp,A),\Phi_{2}(\vp,t)\right),
\end{equation*}
where
\begin{equation*}
\Phi_{1}(\vp,A) = \int_{M}e^{-n\vp}\omega^{n}-A^{n}
\end{equation*}
and
\begin{equation*}
\begin{split}
\Phi_{2}(\vp,t) = {} & \ddbar(e^{\vp}\omega-t\alpha e^{-\vp}\rho)\wedge\omega^{n-2} \\[2mm]
                     & +n\alpha\ddbar\vp\wedge\ddbar\vp\wedge\omega^{n-2} \\
                     & +n\alpha(t-1)\ddbar h\wedge\ddbar h\wedge\omega^{n-2}+t\mu\frac{\omega^{n}}{n!}.
\end{split}
\end{equation*}

By the same argument of Section \ref{Proof of existence theorem}, we have
\begin{equation*}
(D_{\vp}\Phi)_{(\hat{\vp},\hat{t},\hat{A})}(u) = \left(-\int_{M}ne^{-n\hat{\vp}}u\omega^{n},Lu\right),
\end{equation*}
where
\begin{equation*}\label{Definition of L}
\begin{split}
(Lu)\omega^{n} = {} & \ddbar(ue^{\hat{\vp}}\omega+\hat{t}\alpha ue^{-\hat{\vp}}\rho)\wedge\omega^{n-2} \\
                    & +2n\alpha\ddbar\hat{\vp}\wedge\ddbar u\wedge\omega^{n-2}.
\end{split}
\end{equation*}
and
\begin{equation}\label{Kernel of L}
\textrm{Ker}L = \{ cu_{0} ~|~ c\in\mathbf{R} \},
\end{equation}
where $u_{0}$ is a positive function. Similarly, we see that $(D_{\vp}\Phi)_{(\hat{\vp},\hat{t},\hat{A})}$ is invertible. Using the implicit function theorem, near $(\hat{\vp},\hat{t},\hat{A})$, there exists a smooth map $F(t,A)$ such that $\Phi(F(t,A),t,A)=(0,0)$, where $\vp_{\hat{t},\hat{A}}$ (for convenience, we denote it by $\hat{\vp}$) satisfies
\begin{equation*}
\Phi(\hat{\vp},\hat{t},\hat{A}) = (0,0).
\end{equation*}
This implies that $F(t,A)$ is the solution of (\ref{Fu-Yau equation t}) satisfying the elliptic and $L^{n}$-normalization conditions. Thanks to Theorem \ref{Uniqueness theorem alpha negative}, we have $\vp_{t,A}=F(t,A)$. Hence, $\vp_{t,A}$ is smooth at $(\hat{t},\hat{A})$. Since $(\hat{t},\hat{A})$ is arbitrary, we complete the proof.
\end{proof}

\begin{proof}[Proof of Theorem \ref{Monotonicity theorem}]
By the definition of $\vp_{t,A}$, it suffices that prove $\vp_{1,A}>\vp_{1,\ti{A}}$.  Define $\vp(s)=\vp_{1,A^{s}\ti{A}^{1-s}}$.  Then it  follows that
\begin{equation*}
\begin{split}
\ddbar(e^{\vp(s)}\omega & -t\alpha e^{-\vp(s)}\rho)\wedge\omega^{n-2}+n\alpha\ddbar\vp(s)\wedge\ddbar\vp(s)\wedge\omega^{n-2} \\
& +n\alpha(t-1)\ddbar h\wedge\ddbar h\wedge\omega^{n-2}+t\mu\frac{\omega^{n}}{n!} = 0,
\end{split}
\end{equation*}
and
\begin{equation*}
\int_{M}e^{-n\vp(s)}\omega^{n} = A^{ns}\ti{A}^{n(1-s)}.
\end{equation*}
By Lemma \ref{Smooth lemma},   we can differentiate the above two  equations with respect to $s$,  respectively,  and we obtain
\begin{equation*}
\frac{\de\vp(s)}{\de s} \in \textrm{Ker}L \text{~and~} \int_{M}e^{-n\vp(s)}\frac{\de\vp(s)}{\de s}\omega^{n} > 0.
\end{equation*}
Recalling (\ref{Kernel of L}), we see that $\frac{\de\vp(s)}{\de s} > 0$, which implies
\begin{equation*}
\vp_{1,A}-\vp_{1,\ti{A}} = \int_{0}^{1}\frac{\de\vp(s)}{\de s}ds > 0.
\end{equation*}
Theorem \ref{Monotonicity theorem} is proved.
\end{proof}

\begin{remark}\label{Special case remark}
When $\textrm{tr}_{\omega}\rho\geq0$, using (\ref{Zero order estimate equation 2}) and Sobolev inequality, we obtain
\begin{equation*}
\left(\int_{M}e^{-k\beta\vp}\omega^{n}\right)^{\frac{1}{\beta}}
\leq Ck\int_{M}e^{-(k-1)\vp}\omega^{n},
\end{equation*}
which implies $\|e^{-\vp}\|_{L^{\infty}}\leq C$. By the similar argument in Section 5, we get the analogous estimate of Theorem \ref{A priori estimates} under the normalization $\|e^{-\vp}\|_{L^{1}}=A$. This estimate is enough for the proofs of Theorem \ref{Uniqueness theorem alpha negative} and \ref{Monotonicity theorem}.
\end{remark}


\begin{thebibliography}{99}

\bibitem{CHZ17} \textit{J. Chu, L. Huang} and \textit{X. Zhu,} {The $2$-nd  Hessian  type  equation on  almost Hermitian manifolds,} preprint, arXiv: 1707.04072.

\bibitem{CHZ18} \textit{J. Chu, L. Huang} and \textit{X. Zhu,} {The Fu-Yau equation in higher dimensions,} preprint, arXiv: 1801.09351.

\bibitem{CTW16} \textit{J. Chu, V. Tosatti} and \textit{B. Weinkove,} {The Monge-Amp\`ere equation for non-integrable almost complex structures,} to appear in J. Eur. Math. Soc. (JEMS)

\bibitem{DK12} \textit{S. Dinew} and \textit{S. Ko{\l}odziej,} {Liouville and Calabi-Yau type theorems for complex Hessian equations,} Amer. J. Math. \textbf{139} (2017), no. 2, 403--415.

\bibitem{FGV16} \textit{A. Fino, G. Grantcharov} and \textit{L. Vezzoni,} {Astheno-K\"ahler and balanced structures on fibrations,} preprint, arXiv: 1608.06743.

\bibitem{FT11} \textit{A. Fino} and \textit{A. Tomassini,} {On astheno-K\"{a}hler metrics,} J. London Math. Soc. \textbf{83} (2011), 290--308.

\bibitem{FuY07} \textit{J.-X. Fu} and \textit{S.-T. Yau,} {A Monge-Amp\`{e}re-type equation motivated by string theory,} Comm. Anal. Geom. \textbf{15} (2007), no. 1, 29--75.

\bibitem{FuY08} \textit{J.-X. Fu} and \textit{S.-T. Yau,} {The theory of superstring with flux on non-K\"{a}hler manifolds and the complex Monge-Amp\`{e}re equation,} J. Differential Geom. \textbf{78} (2008), no. 3, 369--428.

\bibitem{GM16} \textit{M. Garcia-Fernandez,} {Lectures on the Strominger system,} preprint, arXiv: 1609.02615.

\bibitem{GP04} \textit{E. Goldstein} and \textit{S. Prokushkin}, {Geometric model for complex non-K\"{a}hler manifolds with $SU(3)$ structure,} Comm. Math. Phys. \textbf{251} (2004), no. 1, 65--78.

\bibitem{GRW15} \textit{P. Guan, C. Ren} and \textit{Z. Wang,} {Global $C^{2}$-estimates for convex solutions of curvature equations,} Comm. Pure Appl. Math. \textbf{68} (2015), no. 8, 1287--1325.

\bibitem{HMW10} \textit{Z. Hou, X.-N. Ma} and \textit{D. Wu,} {A second order estimate for complex Hessian equations on a compact K\"{a}hler manifold,} Math. Res. Lett. \textbf{17} (2010), no. 3, 547--561.

\bibitem{JY93}  \textit{J. Jost} and \textit{S.-T. Yau,} {A nonlinear elliptic system for maps from Hermitian to Riemannian manifolds and rigidity theorems in Hermitian geometry}, Acta Math. \textbf{170} (1993), no. 2, 221--254; Correction, Acta Math. \textbf{173} (1994), no. 2, 307.

\bibitem{LT94}  \textit{M. Lin} and \textit{N. S. Trudinger,} {On some inequalities for elementary symmetric functions,} Bull. Aust. Math. Soc. \textbf{50} (1994), 317--326.

\bibitem{LU17} \textit{A. Latorre} and \textit{L. Ugarte,} {On non-K\"ahler compact complex manifolds with balanced and astheno-K\"ahler metrics,} Com. Ren. Acad. Sci. Math. \textbf{355} (2017), 90--93.

\bibitem{LY87} \textit{J. Li} and \textit{S.-T. Yau,} {Hermitian-Yang-Mills connection on non-K\"ahler manifolds,} Mathematical aspects of string theory (San Diego, Calif., 1986), 560--573, Adv. Ser. Math. Phys., 1, World Sci. Publishing, Singapore, 1987.

\bibitem{LYZ94} \textit{J. Li, S.-T. Yau} and \textit{F. Zheng,} {On projectively flat Hermitian manifolds,} Comm. Anal. Geom. \textbf{2} (1994), 103--109.

\bibitem{Ma09}  \textit{K. Matsuo, } {Astheno-K\"ahler structures on Calabi-Eckmann manifolds}, Colloq. Math. \textbf{115} (2009), no. 1, 33--39.

\bibitem{MT01}  \textit{K. Matsuo }and \textit{T. Takahashi,} {On compact astheno-K\"ahler manifolds}, Colloq. Math. \textbf{89} (2001), no. 2, 213--221.

\bibitem{PPZ15} \textit{D. H. Phong, S. Picard} and \textit{X. Zhang,} {On estimates for the Fu-Yau generalization of a Strominger system,} to appear in J. Reine Angew. Math.


\bibitem{PPZ16a} \textit{D. H. Phong, S. Picard} and \textit{X. Zhang,} {A second order estimate for general complex Hessian equations,} Anal. PDE \textbf{9} (2016), no. 7, 1693--1709.

\bibitem{PPZ16b} \textit{D. H. Phong, S. Picard} and \textit{X. Zhang,} {The Fu-Yau equation with negative slope parameter,} Invent. Math. \textbf{209} (2017), no. 2, 541--576.

\bibitem{PPZ18} \textit{D. H. Phong, S. Picard} and \textit{X. Zhang,} {Fu-Yau Hessian equations,} preprint, arXiv: 1801.09842.

\bibitem{SY80} \textit{Y. T. Siu}, {The complex analyticity of harmonic maps and the strong rigidity of compact K\"{a}hler manifolds}. Ann. of Math. \textbf{112} (1980), 73--111.

\bibitem{Str86} \textit{A. Strominger,} {Superstrings with torsion,} Nuclear Phys. B \textbf{274} (1986), no. 2, 253--284.

\bibitem{Sze15} \textit{G. Sz\'ekelyhidi,} {Fully non-linear elliptic equations on compact Hermitian manifolds,} to appear in J. Differential Geom.

\bibitem{STW17} \textit{G. Sz\'ekelyhidi, V. Tosatti} and \textit{B. Weinkove,} {Gauduchon metrics with prescribed volume form}, Acta Math. \textbf{219} (2017), no. 1, 181--211.

\bibitem{TWWY15} \textit{V. Tosatti, Y. Wang, B. Weinkove} and \textit{X. Yang,} {$C^{2,\alpha}$ estimate for nonlinear elliptic equations in complex and almost complex geometry,} Calc. Var. Partial Differential Equations \textbf{54} (2015), no. 1, 431--453.

\bibitem{TW17} \textit{V. Tosatti} and  \textit{B. Weinkove} {The Monge-Amp¨¨re equation for $(n-1)$-plurisubharmonic functions on a compact K\"{a}hler manifold}, J. Amer. Math. Soc. \textbf{30} (2017), no. 2, 311--346.

\bibitem{WY09} \textit{M. Warren} and \textit{Y. Yuan,} {Hessian estimates for the sigma-2 equation in dimension 3}, Comm. Pure Appl. Math. \textbf{62} (2009), no. 3, 305--321.




\end{thebibliography}
\end{document}